\theoremstyle{plain}  
\newtheorem{theorem}{Theorem}[section] 
\newtheorem{proposition}[theorem]{Proposition}
\newtheorem{lemma}[theorem]{Lemma}
\theoremstyle{definition}
\theoremstyle{remark}
\newtheorem{remark}[theorem]{Remark}
\newcommand{\ass}{\quad\mbox{as}\quad}
\newcommand{\inn}{{\quad\hbox{in } }}
\newcommand{\onn}{{\quad\hbox{on } }}
\newcommand{\ttt}{\tilde }
\newcommand{\pp}{ {\partial} }
\newcommand{\R} {\mathbb R}
\newcommand{\cuad}{{\sqcap\kern-.68em\sqcup}}
\newcommand{\foral}{\quad\mbox{for all}\quad}
\newcommand{\ve}{\varepsilon}
\newcommand{\be}{\begin{equation}}
\newcommand{\ee}{\end{equation}}
\newcommand{\la}{\lambda}
\newcommand{\equ}[1]{(\ref{#1})}
\newcommand{\uin}{u_{\mbox {in}}}
\newcommand{\uout}{u_{\mbox {out}}}
\numberwithin{equation}{section}
\title[Infinite time blow-up for the 3d critical heat equation]{Infinite time blow-up   for the 3-dimensional energy  critical heat equation}
\author[M. del Pino]{Manuel del Pino}
\address{\noindent   Department of Mathematical Sciences University of Bath,
Bath BA2 7AY, United Kingdom \\
and  Departamento de
Ingenier\'{\i}a  Matem\'atica-CMM   Universidad de Chile,
Santiago 837-0456, Chile}
\email{m.delpino@bath.ac.uk}
\author[M. Musso]{Monica Musso}
\address{\noindent   Department of Mathematical Sciences University of Bath,
Bath BA2 7AY, United Kingdom \\
and Departamento de Matem\'aticas, Universidad Cat\'olica de Chile, Macul 782-0436, Chile}
\email{m.musso@bath.ac.uk}
\author[J. Wei]{Juncheng Wei}
\address{\noindent  Department of Mathematics University of British Columbia, Vancouver, BC V6T 1Z2, Canada
}  \email{jcwei@math.ubc.ca}
\begin{document}


\begin{abstract}
We construct  globally defined in time,  unbounded  positive solutions to the energy-critical heat equation in dimension three
$$
u_t = \Delta u + u^5 , \quad {\mbox {in}} \quad \R^3 \times (0,\infty), \ \ u(x, 0)= u_0 (x)\inn \R^3.
$$
For each $\gamma>1$ we find  initial data (not necessarily radially symmetric)  with
 $\lim\limits_{|x| \to \infty} |x|^\gamma u_0 (x)  >0$ such that  as $t \to \infty$
$$
 \| u(\cdot ,t ) \|_\infty \sim t^{\gamma-1 \over 2} , \quad {\mbox {if}} \quad 1<\gamma <2, \quad  \| u(\cdot ,t ) \|_\infty \sim  \sqrt{t}, \quad {\mbox {if}} \quad \gamma >2, \quad
$$
and
$$
\| u(\cdot , t)\|_\infty \sim \sqrt{t}\, (\ln t )^{-1}  , \quad {\mbox {if}} \quad \gamma = 2.
$$
Furthermore we show that this infinite time blow-up is co-dimensional one stable.  The existence of such solutions was  conjectured by Fila and King \cite{FK}.

\end{abstract}


\maketitle

\setcounter{equation}{0}
\section{Introduction}\label{intro}

Let  $n\ge 3$. The {\em energy critical heat equation} in $\R^n$  is the parabolic Cauchy problem
\begin{equation}
\label{p000}
\left\{\begin{matrix}
u_t = \Delta u + |u|^{\frac{4}{n-2}}u &  \inn & \R^n& \times (0,\infty) , \\ \\
u(\cdot,0) = u_0 &  \inn & \R^n.&
\end{matrix}
\right.
\end{equation}
The {\em energy }
$$
E(u) = \frac 12 \int_{\R^n} |\nabla u|^2 - \frac {n-2}{2n} \int_{\R^n} |u|^{\frac {2n}{n-2}}
$$
 defines a Lyapunov functional for Problem \equ{p000}. In fact for classical solutions $u(x,t)$ with sufficient decay in space variable we have that
$$
\frac{d}{d t} \, E(u(\cdot ,t))  =  - \int_{\R^n} |u_t|^2 .
$$
Classical parabolic theory yields that the Cauchy problem \equ{p000} is well-posed in its natural
finite-energy space for short time intervals.

\medskip
In this paper we are interested in {\bf positive finite-energy solutions} of \equ{p000} which are { global in time}, namely defined and smooth in the entire time interval $ (0,\infty)$. The presence of the Lyapunov functional implies that limits of bounded solutions along sequences $t=t_n\to +\infty$ can only be steady states, namely solutions of the Yamabe equation
\be\label{talenti}
\Delta u + |u|^{\frac 4{n-2}}u=0 \inn \R^n.
\ee
 All {\bf positive} solutions of \equ{talenti} are given by the {\em Aubin-Talenti  bubbles}
$$
U_{\mu,\xi} (x) =    \mu^{-\frac{n-2}2} w \left ( \frac {x-\xi}{\mu}\right) ,
$$
where $\mu>0$, $\xi\in \R^n$ and
$$
w(x)\ = \  (n(n-2))^{\frac{n-2}4} \left ( \frac 1 { 1+ |x|^2}    \right )^{\frac{n-2}2}  .
$$
They are precisely the extremals of Sobolev's embedding.  The {\em criticality} of Problem \equ{p000}
refers to the presence of this continuum of steady states which become singular as $\mu\to 0$, in addition to energy invariance. In fact we immediately see that
$$
E(U_{\mu,\xi})  = E(U)  \foral \xi\in \R^n,\  \mu>0.
$$
A solution $u(x,t)$ of \equ{p000} which looks around one or more points of space like
$ u(x,t) \approx  U_{\mu(t),\xi(t)} (x)     $  with $\mu(t) \to 0$ is called a bubbling blow-up solution. Bubbling phenomena is present in many important time-dependent and stationary setting, usually carrying deep meaning in the global structure of their solutions.  Notable examples include the Yamabe  and harmonic map flows and the Keller-Segel chemotaxis system.  (See \cite{CFM, DDS, rs1, DDW, GM} and the references therein.) In the last decade or so it has been extensively studied in energy-critical wave equations, Schrodinger maps and other dispersive settings.

\medskip
Problem \equ{p000} is a simple looking model which contains much of the complexity of the bubbling blow-up issue.  Basic questions have remain unanswered until today.  Existence or nonexistence of infinite time bubbling positive solutions in Problem \equ{p000} is not known. This question has been explicitly stated for instance in
\cite{PY2} and in \cite{QS}, Remark 22.10. Detecting such solutions rigorously is not easy. Usual behaviors in the flow \equ{p000} are either asymptotic vanishing
 or blow-up in finite time. Global solutions with nontrivial asymptotic patterns are typically unstable objects and hence harder to be detected.

 \medskip
 In a very interesting paper Fila and King \cite{FK} provided insight on the question in the case of
 a radially symmetric, positive  initial condition with an exact power decay rate.
 Using formal matching asymptotic analysis, they demonstrated that the power decay determines the blow-up rate in a precise manner.
 Intriguingly enough, their analysis leads them to conjecture that infinite time blow-up {\bf should only happen} in low
 dimensions 3 and 4, see  Conjecture 1.1 in \cite{FK}.

\medskip
 In this paper we rigorously establish the existence of solutions with infinite time blow-up
in dimension 3, confirming the conjecture in \cite{FK}.
Thus we consider the Cauchy problem
\begin{equation}
\label{p001}
\left\{\begin{matrix}
u_t = \Delta u + u^5 &  \inn & \R^3& \times (0,\infty) , \\ \\
u(\cdot,0) = u_0 &  \inn & \R^3,&
\end{matrix}
\right.
\end{equation}
for an initial datum $u_0$ which we assume first  radially symmetric with an exact
 power decay of the form
\be\label{gam}
\lim_{|x|\to \infty }  |x|^\gamma u_0(x )\ =:\ A >0. \quad
\ee
As in \cite{FK} we assume that $\gamma >1$ which means that $u_0$ decays faster than
the bubble
\begin{equation}
\label{bubble1}
w(x) = 3^{1\over 4} \, \left( {1\over 1+|x|^2} \right)^{1\over 2}.
\end{equation}

\medskip

\begin{theorem}\label{teo1}
Given $\gamma >1$, there exists a positive, radially symmetric global solution $u(x,t)$ to problem $\equ{p001}$
whose initial condition $u_0(|x|)$ satisfies $\equ{gam}$
and  as $ t \to +\infty$
\begin{equation}\label{alglin}
 \| u(\cdot ,t ) \|_{ L^\infty(\R^3) }   \sim   \left \{ \begin{matrix}   t^{\frac{\gamma-1}2} & \quad {\mbox {if}} \quad & \quad\  1<\gamma <2, \\  \frac {\sqrt{t}}{ \ln t}   &\quad {\hbox {if}} \quad  &\gamma = 2,
 \\  \sqrt{t} & \quad {\mbox {if}} \quad & \gamma >2.
  \end{matrix} \right.
\end{equation}

\end{theorem}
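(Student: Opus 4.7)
Plan: The proof relies on the inner--outer gluing method developed in recent work on bubbling for critical parabolic flows. We seek a radial solution of the form
\[
u(x,t)\ =\ U_{\mu(t),0}(x) + \psi(x,t) + \mu(t)^{-\frac12}\eta_R(y)\,\phi(y,t),\qquad y=x/\mu(t),
\]
where $\eta_R$ is a smooth cutoff supported in $|y|\le 2R$, $\psi$ is a regular outer correction on $\R^3\times(0,\infty)$, $\phi$ is the inner correction in the blow-up variable, and $\mu(t)\downarrow 0$ is a modulation parameter determined by a solvability condition. The initial datum is taken of the form $u_0=U_{\mu(0),0}+\Psi_0$ with $\Psi_0$ a smooth, positive, radial function satisfying $\Psi_0(x)\sim A|x|^{-\gamma}$ at infinity.

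I would first analyze the leading outer profile $\Psi(x,t):=(e^{t\Delta}\Psi_0)(x)$. A Gauss-kernel computation in $\R^3$ gives
\[
\Psi(0,t)\ \sim\ c_\gamma A\,t^{-\gamma/2}\quad(1<\gamma<3),\qquad \Psi(0,t)\sim c\,t^{-3/2}\quad(\gamma>3),
\]
with a logarithmic correction at $\gamma=3$. The bubble contribution at moderate spatial scales is $U_{\mu,0}(x)\sim 3^{1/4}\sqrt{\mu(t)}/|x|$, so the inner and outer profiles must be matched in the intermediate region $\mu(t)\ll|x|\ll 1$. Inserting the ansatz into \equ{p001} and imposing the solvability condition for the inner linearized operator, namely orthogonality of the inner residual against the radial kernel element $Z_0=\tfrac12 w+y\cdot\nabla w$ of $L_w=\Delta+5w^4$, yields a reduced ODE for $\mu(t)$ of the form
\[
\dot\mu(t)\ \approx\ a\,\mu(t) + b\,\sqrt{\mu(t)}\,\Psi(0,t),
\]
for explicit constants $a,b$. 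Integrating this relation against the decay of $\Psi(0,t)$ produces the three regimes
\[
\mu(t)\sim t^{-(\gamma-1)}\ (1<\gamma<2),\qquad \mu(t)\sim t^{-1}(\ln t)^{2}\ (\gamma=2),\qquad \mu(t)\sim t^{-1}\ (\gamma>2),
\]
and hence, via $\|u(\cdot,t)\|_\infty\sim\mu(t)^{-1/2}$, the $L^\infty$ rates of the theorem. The critical exponent $\gamma=2$ is the value at which self-interaction and outer forcing enter the reduced ODE at the same order, producing the logarithm.

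The construction of $\psi,\phi$ then proceeds by a fixed-point argument resting on two linear theories: (i) weighted $L^\infty$ estimates for the linear heat equation in $\R^3$, tracking the $|x|^{-\gamma}$ decay at spatial infinity together with a prescribed size of $\psi(0,\cdot)$ in time; (ii) weighted invertibility of the stationary operator $L_w$ in the radial class on $\R^3$, modulo the kernel element $Z_0$. The orthogonality against $Z_0$ is supplied precisely by the choice of $\mu$, and a contraction mapping on the triple $(\psi,\phi,\mu)$ in appropriate weighted norms then produces the solution. Positivity of $u$ at all times follows from pointwise smallness of the corrections relative to $U_{\mu,0}$, which is built into the fixed-point norms.

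The main obstacle is the sharpness of the inner linear theory in dimension three. Here the bubble $w$ decays only like $|y|^{-1}$, and the radial kernel element $Z_0$ inherits this slow decay, which places the scheme at the boundary of the admissible weight range: inverting $L_w$ costs exactly one power of $|y|$ which must be absorbed by the outer problem without generating logarithmic divergences. Capturing cleanly the logarithmic rate at $\gamma=2$ further requires that the Duhamel analysis of $\Psi$ be pushed one order past the leading $t^{-\gamma/2}$ term and matched to the next correction of the bubble tail. These two effects, absent in the analogous higher-dimensional constructions, are what make the three-dimensional Fila--King conjecture substantially harder than its counterparts for $n\ge 5$.
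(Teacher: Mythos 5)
Your overall architecture (inner--outer gluing, modulation of $\mu$, orthogonality of the inner residual against $Z_0$, contraction in weighted norms) is the same as the paper's, but the mechanism you propose for determining $\mu(t)$ is not the one that works, and as stated it does not reproduce the rates. First, the leading order of $\mu$ is not obtained from a reduced ODE driven by $\Psi(0,t)=(e^{t\Delta}\Psi_0)(0)$. It is fixed by an \emph{algebraic} matching in the intermediate region $\mu\ll r\ll\sqrt t$: the bubble tail $3^{1/4}\mu^{1/2}/r$ must match the coefficient of the $1/r$-singularity of an explicit self-similar solution $t^{-\gamma/2}g(r/\sqrt t)$ of the heat equation with $g(s)\sim d/s$ as $s\to0$ and $s^\gamma g(s)\to A$ as $s\to\infty$ (Lemma \ref{uno}); this gives $\mu_0^{1/2}\sim d\,t^{-(\gamma-1)/2}$ directly, and the threshold $\gamma=2$ arises because $d$ is finite iff $\int_0^\infty r\,u_0(r)\,dr<\infty$, i.e.\ iff $\gamma>2$ (with $d\sim\log t$ at $\gamma=2$). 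Your scheme, based on the value $\Psi(0,t)$ of the caloric extension, would place the transition at $\gamma=3$ rather than $\gamma=2$; and if one actually integrates your reduced ODE $\dot\mu\approx a\mu+b\sqrt\mu\,\Psi(0,t)$ with $\Psi(0,t)\sim t^{-\gamma/2}$, the dominant balance gives $\sqrt\mu\sim t^{1-\gamma/2}$, i.e.\ $\mu\sim t^{2-\gamma}$, which \emph{grows} for $1<\gamma<2$. So the reduced equation you wrote is not consistent with the rates you then assert.

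Second, and this is the central point of the paper that your proposal misses: in dimension $3$ the orthogonality condition does \emph{not} reduce to a local ODE for $\mu$. Because the error term $\alpha(t)/(\mu+r)$ generated by the dilation mode is non-integrable at the relevant weight, one must first absorb it by a caloric correction $\phi_0$ solving $\partial_t\phi_0=\Delta\phi_0+\bar\alpha(t)(\mu+|y|)^{-1}\mathbf 1_{\{|y|<M\}}$, and the orthogonality condition then becomes an equation for $\phi_0(0,t)$, i.e.\ for the history integral $\int_{t_0-1}^t\frac{\beta'(s)}{\sqrt{t-s}}\bigl(1-e^{-M^2/(t-s)}\bigr)\,ds=h(t)$ --- a perturbation of the $\tfrac12$-Caputo derivative of the correction $\Lambda$ to $\mu_0$ (see \eqref{na2}, \eqref{mp1}). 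Inverting this nonlocal operator (via Laplace transform, Lemma \ref{lemaalpha}) is what closes the fixed point; no local modulation ODE is available here, precisely because of the slow decay of $w$ and $Z_0$ that you correctly identify as the obstruction. To repair your proposal you would need to (i) replace the determination of $\mu_0$ by the matched-asymptotics computation with the self-similar outer profiles, and (ii) recognize that the solvability condition governs only the lower-order correction to $\mu_0$ and is intrinsically nonlocal in time.
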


More precisely, the blow-up takes place by bubbling near the origin.
The solution of Theorem \ref{teo1} is in  the inner self-similar region,
$ |x| \ll  \sqrt{t}$,  in leading order of the {\em bubbling blow-up form}
$$
u(x,t) \sim  \frac 1{\mu(t)^{\frac 12} } w\left ( \frac {x}{\mu(t)} \right ),
$$
where
\begin{equation}\label{bath1}
\mu (t) \ \sim \
  \left \{ \begin{matrix}   t^{{1-\gamma}} & \quad {\mbox {if}} \quad & \quad\  1<\gamma <2, \\ t^{-1}{ \ln^2 t}   &\quad {\hbox {if}} \quad  &\gamma = 2,
 \\   t^{-1}  & \quad {\mbox {if}} \quad & \gamma >2
  \end{matrix} \right.
\end{equation}
and $w$ is given by \equ{bubble1}.
In the {outer self-similar region} $ |x| \gg  \sqrt{t}$, the solution dissipates in the form of a  self-similar solution of heat equation
$
u_t=\Delta u
$
in $\R^3 \times (0,\infty).$

\medskip
 A  surprising  feature of the construction is the dynamics discovered for the scaling parameter $ \mu (t)$. It has a highly non-local character governed by a equation involving a perturbation of the fractional $\frac{1}{2}$-Caputo derivative. In fact, in order to find the precise lower order corrections needed for the scaling parameter $\mu(t)$ we will need to solve linear equations of the type
 $$
\int_{0}^t {\beta'  (s) \over \sqrt{t-s}} \, \left( 1- e^{-{M^2 \over  (t-s)}}  \right) ds = h(t),
$$
for suitably decaying right hand sides $h(t)$. See (\ref{mp1}) and (\ref{mp11}) below.

\medskip
Problem \equ{p000} is a special  case of the {\em Fujita equation}
\begin{equation}
\label{1.1}
\left\{\begin{matrix}
u_t = \Delta u + u^p &  \inn & \R^n& \times (0,\infty) , \\ \\
u(\cdot,0)= u_0 &  \inn & \R^n&
\end{matrix}
\right.
\end{equation}
with $ p>1$. 
Blow-up phenomena in Problem \equ{1.1} is extremely sensitive to the values of the exponent $p$.
 A vast literature has been devoted to this problem after Fujita's seminal work \cite{F}. We refer the reader for instance to the book \cite{QS} for background and a comprehensive account of results until 2007 and to the more recent works \cite{MM1,MM2,MM3} and references therein. The case $p=\frac{n+2}{n-2}$ is special in many ways. Positive steady states do not exist when $p< \frac{n+2}{n-2}$. Positive radial global solutions must be bounded and go to zero, see
\cite{PQ1, PQ2, QS}. They exist when $p> \frac{n+2}{n-2}$ but they have infinite energy, see
\cite{GNW1}. Infinite time blow-up exists in that case but it has an entirely different nature, see \cite{PY1, PY2}.

\medskip
The study of energy critical problems has attracted much attention in the last decade. For energy-critical wave equations,  blow-up solutions have been characterized and constructed in   \cite{DKM1, DKM2, DKM3, DKM4, KST}.  In \cite{Sch} Type-II sign changing, finite time blow-up for \equ{p000} is constructed, first formally predicted in \cite{FHV}. Threshold dynamics around the steady states of \equ{p000}  has been  characterized in large dimensions $n\ge 7$ in \cite{CMR}.
Also in large dimensions $n\ge 5$ in \cite{CDM} infinite time bubbling solutions of \equ{p000} in a bounded domain under Dirichlet boundary conditions are constructed for $n\ge 5$. The cases $n=3,4$ are indeed considerably more delicate and not treated there. The solutions in Theorem \ref{teo1} are specially meaningful for the full dynamics since they are {\em threshold solutions} in the sense that
the solution of \equ{p001} with initial condition $\la u_0$ goes to zero as $t\to \infty$ if $\la<1$
while it blows-up in finite time if $\la >1$. Radial threshold solutions for various ranges of exponents in \equ{p001} are analyzed in \cite{QS}.

\medskip
We recall that from \cite{FK}, it is not expected
to have this blow-up in entire space in dimensions $n\ge 5$. Our approach is entirely different from that in \cite{Sch} for $n=4$ in which a finite-time type II blow-up solution of \equ{p000} is constructed on the basis of the modulation equation methods developed for critical dispersive equations in  \cite{DK,O,MRR, RR1, rs1}.

 \medskip
  Our approach has a parabolic-elliptic flavor, in line with the recent works  \cite{CDM, DDW}. Since our proofs only rely on elliptic and parabolic estimates, we can easily modify the proof to deal with nonradial and general initial data, in particular establishing {\em codimension 1 stability} of the solution built. This is concordant with a result on \cite{KNS}  on the corresponding wave
analogue. In Section \ref{nr} we prove the following

  \begin{theorem}\label{teonr} Let $\bar v_0 = \bar v_0 (x) $ be a positive continuous function, uniformly bounded for $x \in \R^3$. Let $\gamma >1$ and $\kappa > \max \{{\gamma + 3 \over 2} , \gamma \}$.
 Then, there exists a positive  global solution $u(x,t)$ to problem $\equ{p001}$
with initial condition
$$
u(x,0) = u_0 (|x| ) + \frac{\bar v_0 (x)}{ |x|^\kappa} \left[1-\eta \left(\frac{|x|}{t_0}\right)\right]
$$
where $u_0$ is positive, radially symmetric, satisfies \eqref{gam}, $ t_0>0$ is a fixed large number and $\eta $ is a smooth cut-off function with $ \eta (s)=1$ for $s<1$ and $ \eta (s)=0$ for $s>2$. As $ t \to +\infty$, $u (x,t)$ satisfies \eqref{alglin}.

  \medskip
  \noindent
  Furthermore, there exists a codimension $1$ manifold of functions in  $C^1 (\R^3 )$  converging to $0$ at infinity with a sufficiently fast decay, that contains $u_0 (|x| ) + {\bar v_0 (x) \over |x|^\kappa}  (1-\eta (\frac{|x|}{t_0})) $ such that if
  $\bar u_0$ lies in that manifold and it is sufficiently close to $u_0 (|x| ) + {\bar v_0 (x) \over |x|^\kappa }(1-\eta (\frac{|x|}{t_0} ))$ in the sense that $ \bar u_0= u_0 (|x| ) + {\bar v_0 (x) \over |x|^\kappa } (1-\eta (\frac{|x|}{t_0})) + {\mathcal O} (|x| e^{-b |x|})$ for some $b>0$,  then the solution
  $\bar u (x, t) $ to \eqref{p001} with $\bar u (x,0 ) = \bar u_0 (x)$ is global in time and satisfies \eqref{alglin}.
   \end{theorem}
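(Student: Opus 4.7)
My plan is to extend the inner--outer gluing scheme that underlies Theorem~\ref{teo1} to the nonradial setting by introducing a translation modulation $\xi(t)$ alongside the scale $\mu(t)$, and to handle the codimension-one statement by adjusting one scalar parameter of $\bar u_0$ in order to kill the unstable direction of the reduced flow. Concretely, I would seek a solution of the form
\be
u(x,t) \;=\; \eta_R(x,t)\,\mu(t)^{-1/2}\, w\!\Bigl(\tfrac{x-\xi(t)}{\mu(t)}\Bigr) \;+\; \psi(x,t) \;+\; \varphi_{\mathrm{in}}(x,t),
\ee
where $\eta_R$ is a smooth cut-off supported in the inner self-similar region $|x|\lesssim \sqrt{t}$, the scale $\mu(t)$ follows asymptotically the rates \eqref{bath1}, and $\xi(t)\in\R^3$ is a small translation driven by the nonradial perturbation. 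The outer profile $\psi$ carries both the radial tail from Theorem~\ref{teo1} and an initial contribution $\bar v_0(x)|x|^{-\kappa}(1-\eta(|x|/t_0))$, while $\varphi_{\mathrm{in}}$ is the inner correction.

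At the level of the outer equation, the nonradial piece is transported essentially by the free heat semigroup $e^{(t-t_0)\Delta}$ up to a lower-order coupling $5U_{\mu,\xi}^4 \psi$. The two thresholds on $\kappa$ enter here: $\kappa>\gamma$ forces the perturbation to be strictly subleading with respect to the self-similar tail $A|x|^{-\gamma}$, and $\kappa>(\gamma+3)/2$ is the integrability threshold that yields the pointwise bound on $(e^{(t-t_0)\Delta}\bar v_0|x|^{-\kappa})$ required to place $\psi$ in the weighted Hölder space used in the radial fixed point. Inserting the ansatz into \eqref{p001} produces a coupled system for $(\varphi_{\mathrm{in}},\psi,\mu,\xi)$. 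The linearization at $w$ in $\R^3$ has $L^\infty$-nondegenerate kernel spanned by $Z_0=\tfrac12 w + y\cdot\nabla w$ and $Z_j=\partial_{y_j}w$, $j=1,2,3$. Solvability against $Z_0$ reproduces, modulo a perturbation driven by $\psi$, the nonlocal half-derivative equation
\be
\int_0^t \frac{\beta'(s)}{\sqrt{t-s}}\Bigl(1-e^{-M^2/(t-s)}\Bigr)\,ds \;=\; h(t)
\ee
already solved in the radial case; solvability against each $Z_j$ gives three pointwise ODEs that pin down $\xi(t)$ and force $\xi(t)\to 0$.

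With $(\mu,\xi)$ fixed by these conditions, the inner problem is solved via the invertibility theory of the linearized Aubin--Talenti operator used for Theorem~\ref{teo1} and the outer problem by the explicit Duhamel formula in weighted norms; composing both solutions yields a contraction in the product space. For the codimension-one part, observe that the nonlocal equation above has a one-parameter family of admissible decaying solutions, corresponding to the freedom in the asymptotic value of $\beta$ at infinity. This is precisely the unstable mode of the flow, reflecting that $u_0$ in Theorem~\ref{teo1} is a threshold datum separating finite-time blow-up from extinction. Applying the implicit function theorem to the map that sends an initial datum $\bar u_0$ of the form described in the statement, with tail $\mathcal O(|x|e^{-b|x|})$, to the value of this unstable coordinate produces a locally $C^1$ zero set of codimension one, every point of which yields a global solution with the prescribed asymptotics \eqref{alglin}.

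The main obstacle I anticipate is controlling the nonradial outer profile $\psi$ with decay rates sharp enough not to pollute the delicate nonlocal modulation equation for $\mu(t)$. The three-dimensional heat kernel has slow algebraic decay, and the coupling $5U_{\mu,\xi}^4\psi$ couples nonradial Fourier modes back into the inner region, where they can a priori corrupt the logarithmic rate already present at $\gamma=2$. The assumption $\kappa>\max\{(\gamma+3)/2,\gamma\}$ is tailored precisely to excluding this pathology in the weighted Hölder norms controlling the fixed point, and verifying that the orthogonality conditions decouple the translation modulation $\xi(t)$ from the scale modulation $\mu(t)$ at leading order is, I expect, the most delicate technical point.
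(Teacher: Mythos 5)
Your treatment of the existence part follows the paper's route closely: the paper likewise introduces a translation parameter $p(t)=\int_{t_0}^t P(s)\,ds$ with $|p(t)|/\mu_0(t)\to 0$, places the nonradial datum $\bar v_0(x)|x|^{-\kappa}(1-\eta(|x|/t))$ into the outer approximation, splits the outer solution into a radial and a (faster-decaying) nonradial part, and imposes the four orthogonality conditions \eqref{maxx}: the one against $Z_0$ reproduces the nonlocal half-Caputo equation, while those against $Z_1,Z_2,Z_3$ are perturbations of $\dot p(t)=\mu_0^{1/2}t^{-\kappa+1}\bar u$, solvable under \eqref{boundP}. Your reading of the role of $\kappa>\max\{(\gamma+3)/2,\gamma\}$ is consistent with how these thresholds enter the weighted norms and the solvability of the $\dot p$ equation.

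The codimension-one part, however, contains a genuine misidentification of the unstable mode. You attribute the instability to "the freedom in the asymptotic value of $\beta$ at infinity" in the nonlocal equation \eqref{mp1}. That freedom is a normalization within the family of bubbles: Lemma \ref{lemaalpha} fixes it by the choice $\tilde\beta(0)+M\tilde c_1\int_0^\infty\tilde h=0$, producing a unique admissible $\beta$, and changing this constant merely reparametrizes $\mu$ inside the ansatz; it does not generate a one-parameter family of dynamically distinct outcomes that the initial datum must select among. The actual unstable direction in the paper is spectral: $L_0=\Delta+5w^4$ has a simple negative eigenvalue $\la_0$ with eigenfunction $Z$, which is why the inner problem \eqref{cf1} can only be solved globally for the \emph{specific} initial value $\phi(y,\tau_0)=e_0[h]Z(y)$, with $e_0[h]$ a linear functional of the data (Proposition \ref{prop0cf} and the ODE $e'-\la_0 e=c$ solved backward from $\tau=\infty$). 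The codimension-one manifold is then carved out by the scalar functional $\psi_0\mapsto e_0[\psi_0]$: the paper sets $W=\ker(D_{\psi_0}e_0[0])$ and applies the local inverse function theorem to exhibit $\mathcal M$ as a $C^1$ graph over $W$. Without identifying this exponentially growing mode of the inner linearization and the associated scalar constraint, your implicit-function-theorem step has no well-defined "unstable coordinate" to apply to, so the argument as written does not close.
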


\medskip
In the non-radial setting, the profile of the solution in the inner self-similar regime is
$$
u(x,t) \sim  \frac 1{\mu(t)^{\frac 12} } w\left ( \frac {x - p(t) }{\mu(t)} \right ), \quad {|p(t) | \over \mu(t)} \to 0, \quad {\mbox {as}} \quad t \to \infty
$$
where $w$ is given by \equ{bubble1} and $\mu$ satisfies the asymptotics \eqref{bath1}. Precise description of the dynamics of the center $p=p(t)$ is provided.

\medskip
 A  surprising  feature of the construction is the dynamics discovered for the scaling parameter $ \mu (t)$. It has a highly non-local character governed by a equation involving a perturbation of the fractional $\frac{1}{2}$-Caputo derivative. In fact, in order to find the precise lower order corrections needed for the scaling parameter $\mu(t)$ we will need to solve linear equations of the type
 $$
\int_{0}^t {\beta'  (s) \over \sqrt{t-s}} \, \left( 1- e^{-{M^2 \over  (t-s)}}  \right) ds = h(t),
$$
for suitably decaying right hand sides $h(t)$. See (\ref{mp1}) and (\ref{mp11}) below.

  \medskip
  We believe that an approach similar to that in this paper could be used to prove the existence of global unbounded solution when $N=4, p=3 $ as conjectured in \cite{FK}. We will undertake that issue in a future work.

\medskip

The proof of Theorem \ref{teo1} starts with the construction of an approximate solution to Problem \eqref{p001} with the asymptotic behavior described in
 \eqref{alglin}. This is done in full details in Section \ref{ansatz1}. We then show the existence of an actual solution to Problem \eqref{p001} deforming the approximation, by means of a {\it inner-outer gluing} procedure. This scheme  is described in Section \ref{iogluing}, and its proof is addressed in
 Sections \ref{outerP} to \ref{inner0}. In Section \ref{nr} we prove Theorem \ref{teonr}. Sections \ref{appeA} to \ref{appeC} gather some technical results needed to prove the Theorems.

\medskip
In the rest of the paper, we shall denote by $C$ a generic positive constant, whose value may change from line to line, and within the same line.
We shall use the notation ${\bf c}$ to indicate a positive constant, with ${\bf c} <1$, whose explicit value may change from line to line.
Furthermore, $t_0$ will denote a large fixed positive number and
\begin{equation}\label{defeta}
\eta : \R \to \R,
\end{equation}
a smooth cut-off function with $\eta(s) =1$ for $s<1$ and $=0$ for $s>2$.

\medskip

\medskip
\noindent
{\bf Acknowledgements:}
We are indebted to Marek Fila for introducing this problem to us and for many useful discussions.
 M.~del Pino and M. Musso have been  partly supported by grants
 Fondecyt  1160135,  1150066, Fondo Basal CMM and
Millenium Nucleus CAPDE NC130017. The  research  of J.~Wei is partially supported by NSERC of Canada.

\setcounter{equation}{0}
\section{Construction of an approximate solution and estimate of the associated error}\label{ansatz1}

After shifting the initial time to $t_0 >0$, Problem \eqref{p001} takes the form
\begin{equation}
\label{p}
u_t = \Delta u + u^5 , \quad {\mbox {in}} \quad \R^3 \times (t_0 ,\infty) ,
\end{equation}
with initial condition $u_0 (r) = u (r, t_0 )$ satisfying
\be \label{IC}
\lim_{r \to \infty} r^\gamma u_0 (r)  = A >0, \quad {\mbox {for some}} \quad \gamma >1.
\ee
This section is devoted to the construction of a first approximation for a solution to \eqref{p}-\eqref{IC}, and to the description of the associated error.

\medskip

The first approximation is build by  matching an inner profile, made upon solving the elliptic problem
\begin{equation} \label{yamabe}
\Delta u + u^5 = 0 \quad {\mbox {in}} \quad \R^3,
\end{equation}
and an outer profile, made upon solving the heat equation in the whole space
\begin{equation} \label{heat}
u_t=\Delta u  \quad {\mbox {in}} \quad \R^3,
\end{equation}
in the set of functions satisfying the decaying conditions \eqref{IC}.
It is constructed in Subsections \ref{sc1} (for the inner profile), \ref{sc2} (for the outer profile), and in Subsection \ref{sc3} we derive a precise description of the {\it error} of approximation. In \cite{FK}, this approximate solution was already derived. We realize though that, for our rigorous construction to work, we need a further improvement of the approximation.  This is done in Subsection \ref{sc4}, where we introduce a next correction term, and describe the associated error. It turns out that this next correction term gives the right dynamics for the blow-up rate which turns out to be governed by a  nonlocal differential equation with a fractional time-derivative closely related to the so-called $1/2$-Caputo derivative. See (\ref{mp11}).

\medskip

\subsection{Construction of the first inner profile.} \label{sc1}
We recall that
all positive radially symmetric  solutions to \eqref{yamabe} constitute a one-parameter family of functions, which are given explicitly by
\begin{equation}
\label{bubble}
w(r) = 3^{1\over 4} \, \left( {1\over 1+r^2} \right)^{1\over 2}, \quad w_\mu (r)=
\mu^{-{1\over 2}} w ({r\over \mu} ),
\end{equation}
for any positive number $\mu >0$. (See \cite{aubin,CGS}.)
We denote by  $Z_0$  the only bounded and radial function belonging to  the kernel of the linear operator
\begin{equation}\label{defL0}
L_0 (\phi ) = \Delta \phi + 5 w^4 \phi .
\end{equation}
See \cite{rey}. The function $Z_0$ is explicitly defined by
\begin{equation}\label{Z0def}
Z_0 (r) =  -\left[ {w\over 2} + w'(r) r  \right]= {3^{1\over 4} \over 2} \,  {r^2 -1 \over (1+ r^2)^{3\over 2}}.
\end{equation}
Given $Z_0$, we denote by  $\Phi_1 (r) $ the solution to
\begin{equation}
\label{ubc1}
\Delta \Phi_1 + 5 w^4 \Phi_1 = Z_0,
\end{equation}
defined as
\begin{equation}
\label{expaPhi1}
\Phi_1 (r) = \Phi_0 (r) + \pi_0 + \bar \Phi_1 (r) , \quad {\mbox {where}} \quad \Phi_0 (r) = {3^{1\over 4} \over 4 \, } \, r,
\end{equation}
$$
\left( 5 \int_0^\infty w^4 Z_0 r^2 \, dr \right) \, \pi_0 =
\int_0^\infty (Z_0 - { 3^{1\over 4} \over 2r} ) Z_0 r^2 \, dr  -5 \int_0^\infty w^4 \Phi_0 Z_0 r^2 \, dr
$$
and $\bar \Phi_1 $ being the unique solution to
$$
\Delta \phi + 5 w^{p-1} \phi = \underbrace{(Z_0 -  { 3^{1\over 4} \over 2r} )  - 5 w^4 (\Phi_0 + \pi_0 )}_{:= \Pi_0 (r) } ,
$$
explicitly given by
$$
\bar \Phi_1 (r) =
\tilde Z (r) \int_0^r \Pi_0 (s) Z_0 (s) s^2 \, ds - Z_0 (r) \int_0^r \Pi_0 (s) \tilde Z(s) s^2 \, ds.
$$
In the above expression,  $\tilde Z$ denoted another solution to  $\Delta \phi + 5 w^4 \phi = 0$, linearly independent to $Z_0$. $\tilde Z$ satisfies the asymptotic behavior $\tilde Z(s) \sim s^{-1}$, as $s \to 0$, and
$\tilde Z(s) \sim 1$, as $s \to \infty$.

\medskip

A closer look at the expression of $\bar \Phi_1$ gives that,
$$
\| r^{2-\sigma} \bar \Phi_1 (r) \|_\infty < C,
$$
for some fixed positive constant $C$, and any $\sigma >0$ small.

\medskip

\begin{remark} The solution to (\ref{ubc1}) is not unique. (In fact one can add any multiple of $Z_0$.) The choice we made in (\ref{expaPhi1}) is used to match the outer solution in the next section.

\end{remark}

\medskip
We have now the elements to define the first inner profile.
We introduce a  smooth positive function $\mu (t)$ of the form
\begin{equation}
\label{rho1}
\mu (t) = \mu_0 (t) \left(1+ \Lambda (t) \right)^2, \quad {\mbox {where}} \quad \mu_0 (t) >0, \quad \lim_{t \to \infty} \mu_0 (t) = 0.
\end{equation}
The function $\mu_0$ will be defined below, (see (\ref{rho0def1}), (\ref{rho0def1gamma1}), (\ref{rho0def1gamma2})),  as an explicit function of $t$ depending on the decay rate $\gamma$. On the other hand, the
function $\Lambda = \Lambda (t)$ will be left as a parameter in the construction, and it will be determined in the final argument to get an actual solution to the problem. In the meanwhile, we shall assume that $\Lambda = \Lambda (t)$ is a smooth function in $(t_0 , \infty)$, defined by
\begin{eqnarray}\label{lambdabound}
\Lambda (t) &:&= \int_t^\infty \lambda (s) ds , \quad {\mbox {where}} \quad \la \quad {\mbox {satisfies}} \nonumber \\
\| \la \|_\sharp &:&= \sup_{t > t_0 } \,  \mu_0 (t)^{-1} t \, \left[ \| \la \|_{\infty , [t, t+1]}   + [\la ]_{0, \sigma , [t,t+1]} \right] \,  \leq \ell,
\end{eqnarray}
for $\sigma = {1\over 2} + \sigma'$, with $\sigma' >0$ small, and for some fixed constant $\ell$. Here we intend
$$
\| f \|_{\infty , [t, t+1]} = \sup_{s\in [t, t+1] } |f(s) |, \quad [ f ]_{0, \sigma, [t, t+1]}=  \sup_{s_1 \not= s_2 \in [t, t+1] } { |f(s_1 ) - f(s_2 ) |
\over |s_1 - s_2|^\sigma}.
$$
For later purpose we introduce the space
\begin{equation}\label{Xsharp}
X_\sharp =\{ \la \in C (t_0, \infty) \, : \, \| \la \|_\sharp \quad {\mbox {is bounded}} \}.
\end{equation}
With this in mind, we define the inner approximation to be
\begin{equation}\label{maso0}
u_{{\mbox {in}} }(r,t ) = w_\mu ( r ) + \mu_0' \psi_1 (r, t), \quad \psi_1 (r , t) = \mu^{1\over 2} \Phi_1 ({r\over \mu} ).
\end{equation}
A direct computation gives that
$$
\Delta \psi_1 + 5 w_\mu^4 \psi_1 =- \mu^{-{3\over 2}} Z_0 ({r\over \mu} ) =  {\partial
w_{\mu} \over \partial \mu } (r).
$$
In the region $\{ r \, : \, r >R \mu_0 \} $, where $R$ is any large but fixed positive number, the inner approximation looks like
\begin{equation}
\label{expain}
\uin (r,t ) = 3^{1\over 4} {\mu^{1\over 2} \over r} -  {3^{1\over 4} \over 4} \, \mu_0' \mu^{-{1\over 2}} \, r + \mu_0^{1\over 2} \mu_0'  \, \Theta [\mu ] (r,t) + {\mu_0^{1\over 2} \over r}
\left( {\mu_0 \over r } \right)^2  \, \Theta [\mu] (r,t)
\end{equation}
where $\Theta [\mu ] (r,t)$ denotes a generic function, which depends smoothly on $\mu$, and on $(r,t)$, and which is uniformly bounded, for parameters $\mu$ satisfying \eqref{rho1},
for $r$ in the considered region, and any $t$ large.

\medskip

\subsection{Construction of the first  outer profile and choice of $\mu_0 (t)$.} \label{sc2}

The outer profile is chosen to satisfy  the heat equation
$
u_t = \Delta u $, in the whole space $\R^3$, and to fit the requested decaying property for the initial condition \eqref{IC}. Its properties and exact definitions change depending on the value of the decay rate $\gamma $  of the initial condition $u_0$, see \eqref{IC}. We consider three different situations: $1< \gamma < 2$, $\gamma =2$ and $\gamma >2$.

\medskip
\noindent
{\bf Case $1<\gamma <2$}. \ \ In this case we define $\uout$ as
\be \label{IC2}
\uout (r, t ) = t^{-{\gamma \over 2}} g({r \over \sqrt{t}})
\ee
with $g$ the positive solution to
\begin{equation}\label{ubc2}
g'' (s) + \left( {2\over s}  + {s\over 2} \right) g'(s) + {\gamma \over 2} g(s)  =0 \quad s \in (0, \infty)
\end{equation}
that satisfies the properties
\begin{enumerate}
\item
 $\lim_{s \to \infty} s^\gamma g(s) = A$,
\item  $\lim_{s \to 0^+} s g(s) = d$, for a certain positive constant $d $ for which $\lim_{s \to 0^+} \left[ g(s) - {d \over s} \right]= 0$.
\end{enumerate}
Such a function $g$ indeed exists. Let
$$
L_\nu (g) = g'' + ( {2\over s} + {s\over 2} ) g'+ \nu g, \quad s \in (0, \infty).
$$
In Section \ref{appeA}, we prove the following

\begin{lemma}\label{uno}
If ${1\over 2}< \nu <1$,
there exist two positive linearly independent solutions $y_1 = y_1 (s) $ and $y_2 = y_2 (s)$  to
\begin{equation}\label{app0}
L_\nu (g) = 0, \quad s \in (0, \infty)
\end{equation}
that satisfy respectively
\begin{equation}\label{app11}
y_1 (s)  = {1\over s} + (\nu -1 )  \left(\int_0^\infty s y_1 (s) \, ds \right) + {1 - 2 \nu \over 4}  s + O(s^2),    \quad {\mbox {if}} \quad s \to 0^+,
\end{equation}
\begin{equation}\label{app111}
  y_2 (s)  = c_2 + o(s)   \quad {\mbox {if}} \quad s \to 0^+,
\end{equation}
\begin{equation}\label{app12}
y_1 (s) =   c_1 e^{-{s^2 \over 4}} \, s^{4\nu -3 } , \quad y_2 (s) =   {1\over s^{2\nu}} (1+ o({1\over s} ) ) \quad {\mbox {if}} \quad s \to \infty, \end{equation}
for some positive constants $c_1$, $c_2$.
\end{lemma}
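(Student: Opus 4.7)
The plan is to reduce $L_\nu g=0$ to Kummer's confluent hypergeometric equation, identify $y_1,y_2$ with the two standard Kummer solutions, and then read off the stated asymptotics from classical expansions together with a short integral identity for the subleading coefficient $a_1$ in \eqref{app11}. Concretely, the change of independent variable $x=s^2/4$ combined with the substitution $g(s)=e^{-x}h(x)$ transforms $L_\nu g=0$ into Kummer's equation
\[
xh''(x)+(3/2-x)h'(x)-(3/2-\nu)h(x)=0,
\]
with parameters $a=3/2-\nu$ and $b=3/2$. Under the hypothesis $1/2<\nu<1$ we have $a\in(1/2,1)$, so $\Gamma(a)$, $\Gamma(3/2-\nu)$ and $\Gamma(1-\nu)$ are all finite and strictly positive; the two standard solutions $M(a,b,x)$ and $U(a,b,x)$ therefore form a basis on $(0,\infty)$, and their Taylor expansions at $0$, large-argument asymptotics, and connection formulas are available.

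Setting
\[
y_2(s):=e^{-s^2/4}M(3/2-\nu,3/2,s^2/4),\quad y_1(s):=\tfrac{\Gamma(3/2-\nu)}{2\sqrt{\pi}}\,e^{-s^2/4}U(3/2-\nu,3/2,s^2/4),
\]
the normalization on $y_1$ is chosen so that its leading $1/s$ singularity at zero has coefficient exactly $1$. From $M(a,b,x)=1+(a/b)x+O(x^2)$ we get $y_2(0)=1$ and hence \eqref{app111}; from $M(a,b,x)\sim\Gamma(b)\Gamma(a)^{-1}e^{x}x^{a-b}$ at infinity we obtain $y_2(s)=c_2\,s^{-2\nu}(1+O(1/s^2))$ with $c_2=\Gamma(3/2)4^\nu/\Gamma(3/2-\nu)>0$, matching the second half of \eqref{app12}. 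Similarly $U(a,b,x)\sim\Gamma(b-1)\Gamma(a)^{-1}x^{1-b}$ as $x\to 0^+$ gives the $1/s$ term in \eqref{app11}, and $U(a,b,x)\sim x^{-a}$ at infinity produces the exponentially decaying tail in \eqref{app12}. For the first two corrections at zero, a one-line Frobenius computation---substituting $y_1=s^{-1}+a_1+a_2 s+O(s^2)$ into $L_\nu y_1=0$ and matching the $s^{-1}$ coefficient---forces $a_2=(1-2\nu)/4$ while leaving $a_1$ locally undetermined.

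The constant $a_1$ is recovered from a global integral identity: multiplying $L_\nu y_1=0$ by $s$ and grouping derivatives yields the exact equation
\[
\frac{d}{ds}\Bigl(sy_1'+y_1+\tfrac{s^2}{2}y_1\Bigr)=(1-\nu)\,s\,y_1.
\]
Integrating from $\varepsilon$ to $+\infty$, using the exponential decay of $y_1$ and $y_1'$ at infinity (which kills the boundary term there) and the Laurent expansion $y_1=1/s+a_1+a_2 s+O(s^2)$ as $\varepsilon\to 0^+$ (a direct calculation gives $sy_1'+y_1+(s^2/2)y_1\to a_1$), produces $a_1=(\nu-1)\int_0^\infty sy_1\,ds$, exactly as asserted in \eqref{app11}. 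Integrability of $sy_1$ is clear since $sy_1\to 1$ at $0$ and decays exponentially at $\infty$.

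Positivity of $y_2$ on $(0,\infty)$ follows from the strict positivity of the Taylor coefficients of $M(a,b,\cdot)$ when $a,b>0$; positivity of $y_1$ is most transparent from the integral representation $U(a,b,x)=\Gamma(a)^{-1}\int_0^\infty e^{-xt}t^{a-1}(1+t)^{b-a-1}\,dt$, which for $a=3/2-\nu\in(1/2,1)$, $b=3/2$ and $x>0$ is a convergent positive integral. The mildly delicate point is simply converting Kummer asymptotics in the $x$-variable into $s$-asymptotics without mislaying multiplicative constants and powers of $2$; the identification of $a_1$, by contrast, is algebraically clean via the integration-by-parts identity above, which sidesteps the otherwise painful Kummer connection formula one would need to extract $a_1$ directly from $U$.
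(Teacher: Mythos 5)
Your proposal is correct, and it reaches the conclusion by a genuinely different construction than the paper, even though the key computation for the constant in \eqref{app11} ends up being the same. The paper never names the Kummer functions: it takes $y_2$ as the solution normalized by $\lim_{s\to\infty}s^{2\nu}y_2=1$, builds $y_1$ by reduction of order, $y_1=c\,y_2(s)\int_s^\infty e^{-z^2/4}y_2(z)^{-2}z^{-2}dz$, proves positivity of $y_2$ by a maximum-principle/sub-supersolution argument with the explicit functions $e^{-s^2/4}/s$ and $\int_s^\infty e^{-z^2/4}z^{-2}dz$, and deduces $y_1\sim 1/s$ at the origin by the same reduction to the equation $x\varphi''+(3/2-x)\varphi'-(3/2-\nu)\varphi=0$ together with the fact (quoted from [FHV]) that this equation has no polynomial solutions for $\nu\in(1/2,1)$. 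Your identification of the basis with $e^{-x}M$ and $e^{-x}U$ replaces all of that with classical facts about confluent hypergeometric functions: positivity comes for free from the positive Taylor coefficients of $M$ and the positive integral representation of $U$, and the asymptotics at both ends are read off rather than derived. What you lose is self-containedness; what you gain is that the behavior of $y_1$ at the origin (including the absence of a logarithm at the integer-exponent resonance) is immediate. The determination of $a_1$ is identical in substance: your identity $\frac{d}{ds}\bigl(sy_1'+y_1+\frac{s^2}{2}y_1\bigr)=(1-\nu)sy_1$ is exactly the paper's equation $f''+\frac{s}{2}f'+(\nu-\frac12)f=0$ for $f=sy_1$, integrated over $(0,\infty)$. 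Two harmless discrepancies with the literal statement: your $y_2$ is normalized at $s=0$ rather than at infinity, so it must be rescaled by a positive constant to make the leading coefficient in \eqref{app12} equal to $1$; and your route gives $y_1\sim c_1e^{-s^2/4}s^{2\nu-3}$ rather than $s^{4\nu-3}$ — but the paper's own formula for $y_1$ also yields the exponent $2\nu-3$ (check $\nu=1$, where the exact solution is $e^{-s^2/4}/s$), so the $4\nu-3$ in \eqref{app12} is a typo in the statement, and only the Gaussian decay is ever used.
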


\medskip
Thanks to the Lemma, which we apply to solve \eqref{ubc2} when $\nu = {\gamma \over 2}$, we get that
the function $g$ we are looking for in \eqref{IC2} is thus given by
\begin{equation}
\label{defg1}
g(s ) = d y_1 (s) + A y_2 (s), \quad {\mbox {with}} \quad
d = {2 A y_2 (0) \over (2-\gamma) \left(\int_0^\infty s y_1 (s) \, ds \right)} >0.
\end{equation}
We observe that, in a region like $ r < R^{-1} \sqrt{t}$, for some large but fixed $R$, we get
\begin{equation}
\label{expaout1}
\uout (r,t ) = d {t^{-{\gamma -1 \over 2}} \over r} + t^{-{\gamma +1 \over 2}} \,
A  \, {(1-\gamma )   y_2(0) \over 2 (2-\gamma ) \int_0^\infty z y_1 (z) \, dz  }  \, r + t^{-{\gamma \over 2}} O({r^2 \over t} ).
\end{equation}

\medskip
\noindent
We next choose the function $\mu_0 (t) $ in the definition of $\mu (t)$, \equ{rho1},
in such a way that the functions $\uin$ and $\uout$ automatically match in the whole region $R \mu_0 < r < R^{-1} \sqrt{t}$, for some $R$ large, but fixed independent of $t$.
This is possible if
\begin{equation}\label{rho0def1}
\mu_0 (t)  =  {d^2 \over \sqrt{3} } \, t^{1-\gamma}.
\end{equation}
Indeed, with this choice for $\mu_0 (t)$, and given the bound \eqref{lambdabound},  there exists a constant $C$ so that
\begin{equation}
\label{diffues}
\left| \uin (r,t) - \uout (r,t) \right|  \leq C   { \mu_0^{1\over 2}\over r}, \quad
\left| \nabla \uin (r,t) -\nabla  \uout (r,t) \right|  \leq C   { \mu_0^{1\over 2} \over r^2}
\end{equation}
for any $R \mu_0  < r < R^{-1} \sqrt{t}$, and $t$ large enough.

\medskip
\noindent
{\bf Case $\gamma =2$.} \ \ In this case, we define $\uout$ as
\begin{equation}\label{uout2}
\uout (r, t ) = t^{-1} (\log t ) k A g_0 ({r \over \sqrt{t}}) + t^{-1} h({r\over \sqrt{t}} )
\end{equation}
where $g_0 (s ) = s^{-1} e^{-{s^2 \over 4}}$ is a solution to
\begin{equation}
\label{ubc5}
g'' (s) + \left( {2\over s}  + {s\over 2} \right) g'(s) + g(s)  =0
\end{equation}
and $h$ solves
\begin{equation}\label{ubc22}
h'' (s) + \left( {2\over s}  + {s\over 2} \right) h'(s) + h(s)  =k A g_0 (s)
\end{equation}
with $\lim_{s \to \infty} s^\gamma h(s) = A$, and $\lim_{s \to 0^+} s h(s) = d$,  so that $\lim_{s \to 0^+} \left[ h(s) - {d \over s} \right] = 0$. The function $h$ can be described explicitly. Let $g_1 (s) =s^{-1} e^{-{s^2 \over 4}} \int_0^s e^{z^2 \over 4} dz$. This function  solves \equ{ubc5}. Since $g_1$ and $g_0$ are linearly independent, the variation of parameters formula gives that, for any constants $d $ and $b$
\begin{equation}\label{mitrano}
h(s) = g_0 (s) \left[ d - k A \int_0^s z g_1 (z) \, dz \right] + g_1 (s) \left[ b +
k A \int_0^s z g_0 (z) \, dz \right]
\end{equation} solves \equ{ubc22}. In order to have $\lim_{s \to \infty} s^\gamma h(s) = A$, we need $ 2 \left[ b +
k A \int_0^\infty z g_0 (z) \, dz \right] = A$. Furthermore, to have $\lim_{s \to 0^+} \left[ h(s) - {d \over s} \right] = 0$, we need $b= 0$. Thus we select
\begin{equation}
\label{betak}
b = 0 , \quad k = {1\over 2 \int_0^\infty z g_0 (z) \, dz }.
\end{equation}
Observe that, up to this moment, the constant $d $ is arbitrary. Nevertheless, we remind that $\uout$ wants to be a solution to $u_t = \Delta u = u_{rr} + {2\over r} u_r$. Multiplying this equation by $r$, and integrating in $(0, R)$, for some fixed, large $R$, we get
$$
{d \over dt} \left( \int_0^R r u(r,t) \, dr \right) = R u_r (R,t ) + u(R,t),
$$
where we use the fact that $\lim_{r \to 0} [ r u_r (r,t ) + u(r,t)] =0.$ Next, we integrate the above equation in $t$, from $0$ to $\infty$, and using the fact that
$\lim_{t \to \infty} \int_0^R r u(r,t) \, dt = 0$, we get
\begin{equation}
\label{ubc6}
-\int_0^R r u(r,0) \, dr = \int_0^\infty [ R u_r (R,t) + u(R,t) ] \, dt.
\end{equation}
Take now $u= \uout $ and compute the right hand side of \eqref{ubc6}
\begin{align*}
\int_0^\infty [ R u_r (R,t) + u(R,t) ] \, dt \ =&\ A k
	\int_0^\infty  t^{-1} (\log t ) [ {R \over \sqrt{t}} g_0'( {R \over \sqrt{t}}) + g_0 ({R \over \sqrt{t}}) ] \, dt
	\\     +  &  \int_0^\infty  t^{-1} [ {R \over \sqrt{t}} h'( {R \over \sqrt{t}}) + h ( {R \over \sqrt{t}} ) ] \, dt \quad s:= {R \over \sqrt{t}} \\
=	&\ \left( 4 A k  \int_0^\infty s^{-1} [ s g_0' (s) + g_0 (s) ] \, ds \right) \log R  \\
+ & \bar d +
  \left( 2 \int_0^\infty s^{-1} [ s h' (s) + h (s) ] \, ds \right)
\end{align*}
where $\bar d$ is  the constant defined by
$$\bar d=- \left( 4 A k \int_0^\infty s^{-1} (\log s) [ s g_0' (s) + g_0 (s) ] \, ds \right) .$$ We can simplify the  expression of the constant in front of $\log R$. Indeed, multiplying \equ{ubc5} against $s$, we get that $(s g'(s) + g + {s^2 \over 2} g)'=0$. For $g=g_0$, and using the fact that $g_0$ decays very fast as $s \to \infty$, we get that $ s g_0'(s) + g_0 (s) = -{s^2\over 2} g_0 (s) $ for any $s$, thus
$$
4 A k  \int_0^\infty s^{-1} [ s g_0' (s) + g_0 (s) ] \, ds = A k \left( - 2 \int_0^\infty s g_0 (s) ds \right) =- A
$$
since \equ{betak}.
On the other hand, the decaying condition $\lim_{r \to \infty} r^2 u(r,0) = A$ gives
$$
-\int_0^R r u(r,0)\, dr = - A \log R + B (R),
$$
with $\lim_{R\to \infty} B (R) = B$, being $B$ a real constant. Plugging this information in \equ{ubc6}, we get that
$$
\bar d+  \left( 2 \int_0^\infty s^{-1} [ s h' (s) + h (s) ] \, ds \right) = B.
$$
This last relation defines in a unique way the constant $d >0$ in the definition of $h$, \eqref{mitrano}.
Indeed, a direct computation gives that
$$
\int_0^\infty s^{-1} [ s h' (s) + h (s) ] \, ds = - {d \over 2}  \left( \int_0^\infty s g_0 (s) \, ds \right) + \omega,
$$
with
$$
\omega= {kA\over 2} \int_0^\infty s g_0 (s) (\int_0^s z g_1 (z) \, dz) \, ds +
\int_0^s s^{-1} [sg_1'  + g_1] ( kA \int_0^s z g_0 (z) \, dz ) \, ds,
$$
from which we deduce that
$$
d = {\bar d-2\omega-B \over \int_0^\infty s g_0 (s) \, ds }.
$$
With this choice for the function $h$ in \eqref{uout2}, we get
$$
h(s) = {d \over s} - {s\over 4} [d + 10 k A] + O(s^3 ) , \quad {\mbox {as}} \quad s\to 0^+
$$
and
\begin{eqnarray}\label{expaout2}
\uout (r,t )  &= & {t^{-{1\over 2}} \over r} \, \left[kA (\log t) + d \right]    \\
& + & t^{-1} \left[ -{kA (\log t) \over 4} - {d + 10 k A \over 4} \right] \, {r \over \sqrt{t}} + O \left((\log t)  \,  {r^3 \over t^3 \sqrt{t}} ) \right)\nonumber
\end{eqnarray}
 in the region  $r < R^{-1} \sqrt{t}$, for some large but fixed $R$, as $t \to \infty$.

\medskip
In this case, namely when $\gamma =2$, we  choose $\mu_0$ in \eqref{rho1} as
\begin{equation}\label{rho0def1gamma1}
\mu_0 (t)  =  {[d + k A (\log t)]^2 \over \sqrt{3} } \, t^{-1},
\end{equation}
and thanks to this choice, and to the bound \eqref{lambdabound} on $\la$,  we find a constant $C$ so that
\begin{equation}
\label{diffuesI}
\left| \uin (r,t) - \uout (r,t) \right|  \leq C  { \mu_0^{1\over 2} \over r}, \quad
\left| \nabla \uin (r,t) -\nabla  \uout (r,t) \right|  \leq C { \mu_0^{1\over 2} \over r^2}
\end{equation}
for any $R \mu_0  < r < R^{-1} \sqrt{t}$, for some fixed and large $R$, and for all $t$ large enough.

\medskip
\noindent
{\bf Case $\gamma>2 $.} \ \ In this case, we define $\uout^1 $ as
$$
\uout^1  (r, t ) = t^{-1} \, d  g_0 ({r \over \sqrt{t}}), \quad d = \left( {\int_0^\infty r u_0 (r)  dr \over \int_0^\infty s g_0 (s) \, ds } \right)
$$
where $g_0 (s ) = s^{-1} e^{-{s^2 \over 4}}$ solves \equ{ubc5}, and $u_0(r)$ is the initial condition for \eqref{p}-\eqref{IC}.
Observe that, in a region like $ r < R^{-1}  \sqrt{t}$, for some large but fixed $R$, we get
\begin{equation}
\label{expaout3}
\uout^1 (r,t ) = d\,  {t^{-{1 \over 2}} \over r} - t^{-1} \,
{d \over 4} \, {r \over \sqrt{t}}  + t^{-1} O({r^2 \over t^{3\over 2}} ).
\end{equation}

\medskip
\noindent
For a given time $t$, the function $\uout^1$ is decaying very fast as $r\to \infty$. For this reason, we modify
$\uout^1$ with a function that has the right decay to match the initial condition $u_0(r)$,  for $r$ large. Define
\begin{equation}
\label{vava}
\uout (r,t) = \eta ({r \over t } ) \uout^1 (r,t) + (1-  \eta ({r \over t } )) \uout^2 (r) , \quad {\mbox {with}} \quad \uout^2 (r) = {A \over r^\gamma},
\end{equation}
where $\eta $ is the cut off function defined in \eqref{defeta}.

\medskip
\noindent
In this case, $\gamma >2$, we choose $\mu_0$ in \eqref{rho1} as
\begin{equation}\label{rho0def1gamma2}
\mu_0 (t)  =  {d^2 \over \sqrt{3} } \, t^{-1}.
\end{equation}
With this choice for $\mu_0 (t)$, and thanks to \eqref{lambdabound}, given any large but fixed number $R>0$, there exists a constant $C$ so that
\begin{equation}
\label{diffues2}
\left| \uin (r,t) - \uout (r,t) \right|  \leq C { \mu_0^{1\over 2}  \over r}, \quad
\left| \nabla \uin (r,t) -\nabla  \uout (r,t) \right|  \leq C { \mu_0^{1\over 2}  \over r^2}
\end{equation}
for any $R \mu_0  < r < R^{-1} \sqrt{t}$, and for all $t$ large.

\medskip

\subsection{Construction of the first global approximation and estimate of the error.} \label{sc3}

\medskip
\noindent
Let $r_0 >0$ be a small and fixed number, 
define
\begin{equation}
\label{DefU}
U_1 (r,t) = \eta ({r\over r_0 \sqrt{t}} ) \uin (r,t) + \left(1-\eta ({r\over r_0 \sqrt{t} } ) \right) \uout (r,t)
\end{equation}
where $\eta$ is given by \eqref{defeta}.
For any smooth function $u = u(r,t)$, we define the Error Function as
\begin{equation}
\label{error}
{\mathcal E} [u] (r,t ) = \Delta u + u^5 - u_t .
\end{equation}

Our next purpose is to describe
\be \label{error1}
{\mathcal E}_1 (r,t) = {\mathcal E} [U_1 ] (r,t)
\ee
with $U_1$ given by \equ{DefU}. To this end, we introduce the function $\alpha = \alpha (t)$, $t >t_0$,
\begin{equation}\label{defaaa}
\alpha (t)  = 3^{1\over 4}  \, \mu_0^{-{1\over 2}} \, \left( \mu_0 \Lambda \right)'.
\end{equation}
Since $\Lambda$ satisfies \eqref{lambdabound}, definition \eqref{defaaa} defines a linear homeomorphism
${\mathcal A} : X_\sharp \to X_\flat$, ${\mathcal A} (\la ) = \alpha$, where
\begin{equation}\label{Xflat}
X_\flat =\{ \alpha \in C (t_0, \infty) \, : \, \| \alpha \|_\flat \quad {\mbox {is bounded}} \},
\end{equation}
and
\begin{equation}\label{alphabound}
\| \alpha \|_\flat := \sup_{t >t_0 } \mu_0^{-{3\over 2}} (t) \, t \, \left[ \|\alpha \|_{\infty , [t, t+1]} + |\alpha |_{0, \sigma , [t, t+1]} \right].
\end{equation}
Here $\sigma$ is the number introduced in \eqref{lambdabound}.
Let us denote by   $h_0 : (0, \infty ) \to (0, \infty)$
 a smooth function with the properties that
\begin{equation}\label{defh}
h_0 (s ) =\left\{ \begin{matrix}
{1\over s} & \quad {\mbox {for}} \quad s\to 0 \\
{1\over s^3} & \quad {\mbox {for}} \quad s\to \infty,
\end{matrix}\right.
\end{equation}
and define the following norm for any function $f: \R^3 \times (t_0 , \infty) \to \R$
\begin{eqnarray}\label{norm1}
\| f \|_{*} :=  \sup_{x \in \R^3, t>t_0}  \mu_0^{-{1\over 2}} \, t^{3\over 2}  \, h_0^{-1}  ({r\over \sqrt{t}})  && \Biggl[  \| f \|_{\infty , B(x,1)\times [t, t+1]} \nonumber \\
&+&  \left[ f \right]_{0, \sigma , B(x,1)\times [t, t+1]} \Biggl], \quad r=|x|.
\end{eqnarray}
Here $\sigma$ is defined in \eqref{lambdabound},
\begin{equation}\label{anita1}
\| f \|_{\infty , B(x,1)\times [t, t+1]} = \sup_{y \in B(x,1), \quad  s\in [t,t+1]}  |f(y,s)|
\end{equation}
and
\begin{equation}\label{anita2}
[f]_{0, \sigma , B(x,1)\times [t, t+1]} = \sup_{ y_1\not= y_2 \in B(x,1) , \quad s_1 \not= s_2 \in [t,t+1]}
{|f(y_1 , s_1 ) - f(y_2 , s_2) | \over |y_1 - y_2|^{2\sigma} + |s_1 - s_2|^\sigma}.
\end{equation}

\medskip
We have the validity of the following estimates, whose proof is quite technical and delayed to Section \ref{appeB}.

\begin{lemma}\label{des1}
Assume $\lambda = \lambda (t) $ satisfies \eqref{lambdabound}.
The error function defined in \eqref{error1} can be described as follows
\begin{equation}\label{error1def}
{\mathcal E}_1 (r,t) =
{\alpha (t) \over \mu+ r} \eta ({r\over r_0 \sqrt{t} } ) + {\mathcal E}_{1,*} [\lambda ] (r,t)  ,
\end{equation}
where $\eta$ is the smooth cut off function defined in \eqref{defeta}, $\alpha $ is the function defined in \eqref{defaaa}, and $r_0$ is a given fixed small number. The function ${\mathcal E}_{1,*} [\lambda] (r,t) $ depends smoothly on $\lambda$. Furthermore, there exists $C>0$ such that
\begin{equation}\label{marte0}
 \|{\mathcal E}_{1,*} \|_{*} \leq C.
\end{equation}
If the initial time $t_0$ in Problem \eqref{p} is large enough, there exist ${\bf c} \in (0,1)$ so that,
 for any $\lambda_1$, $\lambda_2$ satisfying \eqref{lambdabound}, we have
\begin{equation}\label{marte0001}
\| {\mathcal E}_{1,*} [ \lambda_1 ]  - {\mathcal E}_{1,*} [ \lambda_2 ]  \|_{\infty , B(x,1)\times [t, t+1]}   \leq {\bf c} \mu_0^{1\over 2} t^{-{3\over 2}}  h_0 ({r\over \sqrt{t}}) \, \| \lambda_1 - \lambda_2 \|_\sharp
\end{equation}
and
\begin{equation}\label{marte0002}
  \left[  {\mathcal E}_{1,*} [ \lambda_1 ]  - {\mathcal E}_{1,*} [ \lambda_2 ]  \right]_{0, \sigma , B(x,1)\times [t, t+1]}  \leq {\bf c} \mu_0^{1\over 2} t^{-{3\over 2}} \,  h_0 ({r\over \sqrt{t}}) \, \| \lambda_1 - \lambda_2 \|_\sharp ,
\end{equation}
for any $r=|x|$ and any $t$.
The definition of the function $h_0$ and of the norm $\| \cdot \|_*$ are given respectively in \eqref{defh} and in \eqref{norm1}.  Furthermore the constant ${\bf c}$ in \eqref{marte0001} and \eqref{marte0002} can be made as small as one needs, provided that the initial time $t_0$ is chosen large enough.
\end{lemma}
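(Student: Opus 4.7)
The plan is to compute the error $\mathcal{E}[U_1] = \Delta U_1 + U_1^5 - \partial_t U_1$ by splitting $\R^3 \times (t_0,\infty)$ into three regions governed by the cut-off $\eta(r/(r_0\sqrt{t}))$: the inner region ($r < r_0\sqrt{t}$, where $\eta \equiv 1$ and $U_1 = \uin$), the outer region ($r > 2r_0\sqrt{t}$, where $\eta \equiv 0$ and $U_1 = \uout$), and the transition annulus. The decisive step is to extract from $\mathcal{E}[\uin]$ the precise piece $\alpha(t)/(\mu+r)$, which captures exactly the slowest-decaying linear-in-$\lambda$ contribution; every other piece is relegated to $\mathcal{E}_{1,*}[\lambda]$ and must be controlled by the parabolic-Hölder weighted norm $\|\cdot\|_*$, with Lipschitz constant in $\lambda$ vanishing as $t_0\to\infty$.

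In the inner region I will exploit the two defining elliptic identities $\Delta w_\mu + w_\mu^5 = 0$ and $\Delta\psi_1 + 5w_\mu^4\psi_1 = \partial_\mu w_\mu$ to rewrite
\[
\mathcal{E}[\uin] \;=\; (\mu_0' - \mu')\,\partial_\mu w_\mu \;-\; \partial_t(\mu_0'\psi_1) \;+\; \mathcal{N}(w_\mu,\mu_0'\psi_1),
\]
where $\mathcal{N}$ collects the quadratic and higher pieces of $(w_\mu+\mu_0'\psi_1)^5 - w_\mu^5 - 5w_\mu^4\mu_0'\psi_1$. Using $\mu = \mu_0(1+\Lambda)^2$ and $\Lambda' = -\lambda$ one gets $\mu_0'-\mu' = -2(\mu_0\Lambda)' + \mathcal{O}(\Lambda^2\mu_0' + \mu_0\Lambda\lambda)$; combined with $\partial_\mu w_\mu = \mu^{-3/2}Z_0(r/\mu)$ and the asymptotics $Z_0(\rho) \sim \tfrac{3^{1/4}}{2\rho}$ as $\rho\to\infty$, $Z_0(\rho) = -\tfrac{3^{1/4}}{2} + \mathcal{O}(\rho^2)$ as $\rho\to 0$, the leading product equals (up to sign) $\alpha(t)/(\mu+r)$ uniformly in the range $r\in(0,r_0\sqrt{t})$, modulo a remainder of the size permitted by the weight $\mu_0^{1/2}t^{-3/2}h_0(r/\sqrt{t})$. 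The transition region contributes the commutator terms $\Delta\eta\,(\uin-\uout)$, $2\nabla\eta\cdot\nabla(\uin-\uout)$, $-\partial_t\eta\,(\uin-\uout)$, together with cross terms in $U_1^5 - \eta\uin^5 - (1-\eta)\uout^5$; these are handled through the matching estimates \eqref{diffues}, \eqref{diffuesI}, \eqref{diffues2} combined with $|\nabla\eta|\lesssim t^{-1/2}$ and $|\partial_t\eta|\lesssim t^{-1}$ supported in $r\sim\sqrt{t}$. In the outer region $\uout$ solves the heat equation up to the cut-off in the case $\gamma>2$, so only $\uout^5$ remains, which decays algebraically and fits the $r^{-3}$-weight $h_0$ at infinity.

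For the Lipschitz estimates \eqref{marte0001}--\eqref{marte0002}, note that the $\lambda$-dependence of $\mathcal{E}[U_1]$ enters only through $\Lambda=\int_t^\infty\lambda$, hence smoothly through $\mu$, $\mu'$, $\alpha$, and the nonlinearity $\mathcal{N}$. Since $\alpha/(\mu+r)\eta$ is subtracted, what remains in $\mathcal{E}_{1,*}[\lambda]$ is either (i) quadratic in $\Lambda$ (Lipschitz constant $\lesssim \|\Lambda\|_\infty = \mathcal{O}(\mu_0(t_0))$), (ii) produced by $\mathcal{N}$ (Lipschitz prefactor $\mu_0'\psi_1 = \mathcal{O}(\mu_0^{1/2})$), or (iii) produced by $\partial_t(\mu_0'\psi_1)$ and by the cut-off commutators (extra $t^{-1}$ or $\mu_0$ gain). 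In each case $\mathbf{c}$ inherits a factor $\mu_0(t_0)+t_0^{-1}$, which is as small as desired once $t_0$ is large. The part of the argument I expect to be most delicate is the parabolic-Hölder bookkeeping on the cylinders $B(x,1)\times[t,t+1]$: the time-Hölder seminorm of $\mathcal{E}_{1,*}[\lambda_1]-\mathcal{E}_{1,*}[\lambda_2]$ forces control of time derivatives of $\lambda$ through the explicit $\sigma$-Hölder component in $\|\cdot\|_\sharp$, supplemented by standard parabolic Schauder interpolation applied to the explicit smooth pieces of the ansatz. Once this bookkeeping is done region-by-region, the estimates \eqref{marte0}--\eqref{marte0002} follow by collecting the bounds.
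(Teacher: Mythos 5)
Your proposal follows essentially the same route as the paper's proof in Appendix B: the same three-region splitting governed by the cut-off, the same use of the elliptic identities to isolate $(\mu_0'-\mu')\,\partial_\mu w_\mu$ and extract $\alpha(t)/(\mu+r)$ from the far-field asymptotics of $Z_0$, the same handling of the commutator terms via the matching estimates \eqref{diffues}--\eqref{diffues2}, and the same source of smallness (factors of $\mu_0(t_0)$ and $t_0^{-1}$) for the Lipschitz constants in $\lambda$. I see no gaps; the only minor imprecision is that for $\gamma>2$ the static tail $\uout^2=A/r^\gamma$ also contributes its own Laplacian term $A\gamma(\gamma-1)/r^{\gamma+2}$ in the region $r>t$, not just cut-off commutators, but this decays fast enough to be absorbed exactly as you describe.
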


\subsection{Construction of the second global approximation and estimate of the new error.}\label{sc4}

\medskip
\noindent
\medskip
Taking into account the expression of the error function given in \eqref{error1def}, we
 introduce a correction function $\phi_0$ to partially get rid of the term ${\alpha (t) \over \mu + r}$. More precisely, let
 \begin{equation}\label{alphabar}
 \bar \alpha (t) = \left\{ \begin{matrix}
\alpha (t_0)  & \quad {\mbox {for}} \quad t< t_0 \\
\alpha (t)  & \quad {\mbox {for}} \quad t \geq t_0
\end{matrix}\right. ,
 \end{equation}
and introduce the function $\phi_0$ solution to
\begin{equation}
\label{defphi0}
\partial_t \phi_0 = \Delta \phi_0 + {\bar \alpha (t) \over \mu + r} \, {\bf 1}_{\{ r<  M\}}, \quad {\mbox {in}} \quad \R^3 \times (0, \infty), \quad
\phi_0 (x, t_0 -1) = 0, \quad {\mbox {in}} \quad \R^3 , \quad M^2= t_0.
\end{equation}
Here, for a set $K$, we mean
$$
{\mbox 1}_K (x) = 1, \quad {\mbox {if}} \quad x\in K, \quad =0 , \quad {\mbox {if}} \quad x \not\in K.
$$
Duhamel's formula provides an explicit expression for  $\phi_0$
\begin{equation}\label{duha}
\phi_0 (x,t) =\int_{t_0 -1}^t {1\over (4\pi (t-s) )^{3\over 2} } \, \int_{\R^3} e^{-{|x-y|^2 \over 4 (t-s) }} \, {\bar \alpha (s) \over \mu + |y|} \, {\bf 1}_{\{ r< M\}} \,  dy \, ds.
\end{equation}
Since $\lambda$ satisfies \eqref{lambdabound}, classical parabolic estimates give that $\phi_0 $ is locally  $C^{2+2\sigma , 1+\sigma}$, where $\sigma$ is the H\"{o}lder exponent in \eqref{lambdabound}.  In the interval $(t_0 , \infty )$, the function $\phi_0$ solves
\begin{equation}
\label{defphi01}
\partial_t \phi_0 = \Delta \phi_0 + {\alpha (t) \over \mu + r} \, {\bf 1}_{\{ r<  M\}}, \quad {\mbox {in}} \quad \R^3 \times (t_0, \infty),
\end{equation}
and at time $t=t_0$, the function $\phi_0 (x,t_0)$ is radial in $x$ and decays fast as $|x| \to \infty$, that is
\begin{equation}\label{defphi02}
| \phi_0 (x,t_0 ) | \leq c e^{-a |x|^2} , \quad {\mbox {as}} \quad |x| \to \infty
\end{equation}
for some positive, fixed constants $a$ and $c$. Indeed,
let $x=\ell \, e$, with $\| e \| = 1$, and assume that $\ell >\max \{ 1, 2M \}$. Thus $|x-y|^2 > {\ell^2 \over 4}$, for any $|y|<M$, and
$$
|\phi_0 (x , t_0)| \leq C |\alpha (t_0 ) | \left( \int_{t_0 -1}^{t_0} {e^{-{\ell^2 \over 16 (t_0 -s)}} \over (t_0 -s )^{3\over 2}} \, ds
\right) \, \left( \int_{|y|<M} {dy \over |y|} \right) \leq C  |\alpha (t_0 ) |  M^2  e^{-{\ell^2 \over 16}}.
$$
Taking $\ell \to \infty$, estimate \eqref{defphi02} thus follows from \eqref{defaaa}.

\medskip
\noindent
The second approximation is given by
\begin{equation}\label{defU2}
U_2 [\lambda ] (r,t) = U_1 (r,t) + \phi_0 (r,t)
\end{equation}
where $U_1$ is in  \eqref{DefU}. Observe that $U_2$ satisfies the decaying conditions \eqref{IC} at the initial time $t_0$ as consequence of \eqref{defphi02}. The new Error Function
$$
{\mathcal E}_2 [\lambda] (r,t) = {\mathcal E} [U_2] (r,t)
$$
is thus
\begin{equation}\label{defE2}
{\mathcal E}_2 [\lambda  ] (r,t) = \underbrace{{\mathcal E}_{1,*} + {\alpha (t) \over r} \left( \eta ({r\over r_0 \sqrt{t} } ) - {\bf 1}_{\{ r < 2M\} }\right) }_{:={\mathcal E}_{21} }+ \underbrace{\left( U_1 + \phi_0 \right)^5 - U_1^5}_{{\mathcal E}_{22}}.
\end{equation}
The function ${\mathcal E}_{1,*}$ is defined in \eqref{error1def}.
For later purpose, it is useful to estimate, in the $\| \cdot \|_*$-norm introduced in \eqref{norm1}, the function
\begin{equation}\label{cut1}
\bar {\mathcal E}_{2} := {\mathcal E}_{21} + (1- \eta_R (x,t) ) {\mathcal E}_{22}
\quad {\mbox {where}} \quad
\eta_{R} (x,t) = \eta \left (\frac {x} {R \mu_{0}}    \right ) .
\end{equation}
Here $\eta(s)$ is given by \eqref{defeta}, while the number $R$ is a large number, whose definition will depend on $t_0$, but it will not dependent on $t$.

\medskip
\medskip
We have the validity of the following lemma, whose proof is given in Section \ref{appeC}.

\begin{lemma}\label{des2}
Assume $\lambda = \lambda (t) $ satisfies \eqref{lambdabound}.
The error function defined in \eqref{defE2}  depends smoothly on $\lambda$ and it satisfies the following estimates: there exists $C>0$
\begin{equation}\label{marte0new}
\| \bar {\mathcal E}_2 \|_{*} \leq C .
\end{equation}
If the initial time $t_0$ is large enough,  there exist  small positive number   ${\bf c} \in (0,1)$ such that, for any $\lambda_1$, $\lambda_2$ satisfying \eqref{lambdabound}, we have
\begin{equation}\label{marte0001new}
\|\bar  {\mathcal E}_2 [ \lambda_1 ]  - \bar {\mathcal E}_2 [ \lambda_2 ]  \|_{\infty , B(x,1)\times [t, t+1]}    \leq {\bf c}  \mu_0^{1\over 2} t^{-{3\over 2}}  h_0 ({r\over \sqrt{t}}) \, \| \lambda_1 - \lambda_2 \|_\sharp , \quad r=|x|,
\end{equation}
and
\begin{equation}\label{marte0002new}
  \left[  \bar {\mathcal E}_2 [ \lambda_1 ] (r,t) - \bar {\mathcal E}_2 [ \lambda_2 ] (r,t) \right]_{0,\sigma , [t,t+1]}  \leq {\bf c} \mu_0^{1\over 2} t^{-{3\over 2}} \,  h_0 ({r\over \sqrt{t}}) \, \| \lambda_1 - \lambda_2 \|_\sharp ,
\end{equation}
for any $x$ and $t>t_0$, provided the initial time $t_0$ in Problem \eqref{p} is chosen large enough.
The definition of the function $h_0$ is given in \eqref{defh}, and the definition of the $\| \cdot \|_*$-norm is given in \eqref{norm1}.
\end{lemma}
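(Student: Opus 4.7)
The strategy is to split $\bar{\mathcal{E}}_2 = \mathcal{E}_{21} + (1-\eta_R)\mathcal{E}_{22}$ and estimate each piece separately. The term $\mathcal{E}_{1,*}$ inside $\mathcal{E}_{21}$ is already controlled by Lemma \ref{des1}, so I am reduced to handling the annular correction $\frac{\alpha(t)}{r}(\eta(r/(r_0\sqrt t)) - \mathbf 1_{\{r<2M\}})$ and the nonlinear piece $(1-\eta_R)[(U_1+\phi_0)^5 - U_1^5]$. For the annular correction, observe that since $M^2 = t_0$ is fixed while $r_0\sqrt t \to \infty$, the support of this term is contained in $\{2M < r < 2r_0\sqrt t\}$ as soon as $t$ is large. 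On that set $r \geq 2\sqrt{t_0}$ is bounded below, and using $|\alpha(t)| \leq \|\alpha\|_\flat\,\mu_0^{3/2}(t)/t$ together with the scaling of $h_0$ near zero produces an estimate with an extra factor $\mu_0$ compared with the target weight $\mu_0^{1/2} t^{-3/2} h_0(r/\sqrt t)$; the Hölder-in-time piece of the $\|\cdot\|_*$-norm is absorbed by the Hölder seminorm of $\alpha$ that comes built into $\|\alpha\|_\flat$.

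For $(1-\eta_R)\mathcal{E}_{22}$, I would first obtain pointwise and parabolic H\"{o}lder bounds on $\phi_0$ from the Duhamel representation \eqref{duha}. The forcing $\frac{\bar\alpha(s)}{\mu+|y|}\mathbf 1_{\{|y|<M\}}$ is supported in the fixed ball of radius $M=\sqrt{t_0}$, has size $\lesssim \|\alpha\|_\flat\,\mu_0^{3/2}(s)/s$, and is $L^1$-integrable in $y$ uniformly in $s$ since $\int_{|y|<M} (\mu+|y|)^{-1}\,dy \lesssim M^2$. Splitting the time integral into a short window $[t-1,t]$ (heat kernel concentrates, direct convolution bound) and a long window (kernel controlled by $(t-s)^{-3/2}$, integrand integrable in $s$ against the polynomial decay of $\bar\alpha$), one obtains an $L^\infty$ bound of the form $|\phi_0(x,t)| \lesssim \|\alpha\|_\flat\,\mu_0^{3/2}(t)\,t^{-1}\cdot\min\{1,M/|x|\}\,e^{-c|x|^2/t}$, with the corresponding Hölder seminorm following from standard interior Schauder estimates applied to the heat equation with H\"{o}lder forcing supported in a fixed ball.

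Combined with the elementary inequality $|(a+b)^5 - a^5| \lesssim |b|\,a^4 + |b|^5$, and the fact that on the support of $1-\eta_R$ we have $|x|\geq R\mu_0$ and hence $U_1 \lesssim \mu^{1/2}/|x| \lesssim 1/\sqrt{R}$, the nonlinear term is controlled by $(1-\eta_R)(|\phi_0|\,U_1^4 + |\phi_0|^5)$. The $U_1^4$ factor is dominated by $\mu\,|x|^{-4} \lesssim \mu_0 r^{-4}$ away from the singularity, and plugging in the bound for $\phi_0$ from the previous paragraph gives an estimate with the required weight and an additional small factor (either $\mu_0$ or a negative power of $R$) which is the source of the constant $\mathbf{c}$ in \eqref{marte0001new}--\eqref{marte0002new}; choosing $R$ large and $t_0$ large makes $\mathbf{c}$ as small as needed.

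For the Lipschitz dependence on $\lambda$, I would use that the map $\mathcal A:X_\sharp \to X_\flat$ is a linear homeomorphism, so $\alpha[\lambda_1] - \alpha[\lambda_2]$ controls $\lambda_1-\lambda_2$ in $\|\cdot\|_\flat$-norm; then $\phi_0$, being linear in $\bar\alpha$ through \eqref{duha}, inherits the same Lipschitz bound multiplied by the $\phi_0$ pointwise estimate above. The differences in $U_1$ and in $\mu$ depend smoothly on $\lambda$ via $\Lambda(t)=\int_t^\infty\lambda$, and the difference $(U_1[\lambda_1]+\phi_0[\lambda_1])^5 - (U_1[\lambda_2]+\phi_0[\lambda_2])^5$ is expanded via the mean value theorem, so the Lipschitz inequalities propagate with no extra loss. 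The Lipschitz estimate on $\mathcal{E}_{1,*}$ is supplied directly by Lemma \ref{des1}. I expect the most delicate step to be the pointwise estimate on $\phi_0$ for intermediate $|x|$ in the range $M \ll |x| \lesssim \sqrt t$: there the kernel is neither strongly localized nor flat, and one must carefully balance the Gaussian decay against the integrated time factor coming from $\bar\alpha$ to obtain a bound that pairs correctly with the $h_0(r/\sqrt t)$ weight appearing in $\|\cdot\|_*$.
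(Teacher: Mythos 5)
Your overall architecture coincides with the paper's: split $\bar{\mathcal E}_2=\mathcal E_{21}+(1-\eta_R)\mathcal E_{22}$, quote Lemma \ref{des1} for $\mathcal E_{1,*}$, gain a factor $\mu_0$ on the annular correction $\frac{\alpha(t)}{r}\bigl(\eta(r/(r_0\sqrt t))-{\bf 1}_{\{r<2M\}}\bigr)$ from $|\alpha|\lesssim \mu_0^{3/2}t^{-1}$, reduce $(1-\eta_R)\mathcal E_{22}$ to a pointwise bound on $\phi_0$ via $|(a+b)^5-a^5|\lesssim a^4|b|+|b|^5$, and propagate Lipschitz dependence through the linearity of $\phi_0$ in $\bar\alpha$. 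All of that is fine.

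The genuine gap is the intermediate estimate $|\phi_0(x,t)|\lesssim \|\alpha\|_\flat\,\mu_0^{3/2}(t)\,t^{-1}\min\{1,M/|x|\}e^{-c|x|^2/t}$, which is false: your "long window'' argument discards the memory of the Duhamel integral. The source is supported in the \emph{fixed} ball $B_M$ and its amplitude $|\bar\alpha(s)|\sim\mu_0^{3/2}(s)s^{-1}$ is integrable but not rapidly decaying, so the contribution of early times $s\in[t_0,t/2]$ to $\phi_0(0,t)$ is of order
$M^{2}\,t^{-3/2}\int_{t_0}^{t/2}|\bar\alpha(s)|\,ds\;\sim\;M^{2}\mu_0^{3/2}(t_0)\,t^{-3/2}$,
which for $\gamma\ge 2$ exceeds your claimed amplitude $\mu_0^{3/2}(t)t^{-1}$ (equal to $t^{-5/2}$ when $\gamma>2$) by an unbounded factor as $t\to\infty$; the same obstruction persists throughout the intermediate range $M\ll|x|\lesssim\sqrt t$, precisely the regime you flag as delicate. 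This is not a removable technicality: the persistence of this tail is the reason $\phi_0(0,t)$ appears as a genuinely nonlocal functional of $\lambda$ in \eqref{na2} and why Sections \ref{secpar}--\ref{secpar1} need the Caputo-type inversion; if your bound held, that entire mechanism would be superfluous. The paper instead bounds $\phi_0$ by comparison with the explicit self-similar supersolution $\psi_*(r,t)=\mu_0^{1/2}t^{-1/2}\varphi_*(r/\sqrt t)$ built in Lemma \ref{lemacalor} (estimate \eqref{maso1}), a much larger, spreading profile, and then recovers the smallness needed for ${\bf c}$ not from the size of $\phi_0$ but from the factor $w_\mu^4\lesssim \mu_0^2 r^{-4}\le R^{-2}\mu_0^{... }$ on the support of $1-\eta_R$ (for $r<\bar M\sqrt t$) and from $|\phi_0|^5$ for $r>\bar M\sqrt t$. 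You need to replace your pointwise bound by one of this self-similar type (or redo the Duhamel estimate keeping the tail term) before the rest of your argument can close.
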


\medskip
\begin{remark}\label{rrr1new} From the proof of the result, we also get that the constant ${\bf c}$ in \eqref{marte0001new} and \eqref{marte0002new} can be made as small as one needs, provided that the initial time $t_0$ is chosen large enough.
\end{remark}

\setcounter{equation}{0}
\section{The inner-outer gluing}\label{iogluing}

We recall the reader that our ultimate purpose is to construct a global unbounded solution $u$ to \equ{p}-\eqref{IC}
 of the form
\be\label{solll}
u= U_2 [\lambda ] (r,t) + \tilde \phi, \quad t > t_0
\ee
where $U_2$ is defined in \eqref{defU2}, while $\tilde \phi (x,t )$ is  a smaller perturbation. The rest of the paper is thus devoted to
 find $\tilde \phi (x,t )$. The construction of $\tilde \phi (x,t )$ is done
by means of
a {\em inner-outer gluing} procedure.
This procedure consists in writing
\begin{equation}\label{deftildephi}
\tilde \phi(x,t) =  \psi (x,t) +    \phi^{in} (x,t)
\quad
{\mbox {where}} \quad
 \phi^{in}(x,t) : =   \eta_{R} (x,t) \hat \phi  (x,t)
 \end{equation}
with
\begin{equation}\label{martin1}
\hat \phi (x,t) := \mu_{0}^{-\frac{1} 2} \phi \left (\frac {x} {\mu_{0}} , t     \right ) , \quad \eta_{R} (x,t) = \eta \left (\frac {x} {R \mu_{0}}    \right ) ,
\end{equation}
where $\eta(s)$ is given in \eqref{defeta}.

\medskip
In terms of $\tilde \phi$, Problem \equ{p}-\eqref{IC} reads as
\be\label{equ2prima}
\partial_t \tilde \phi =  \Delta \tilde \phi + 5 U_2^4\tilde \phi +  N(\tilde \phi ) + {\mathcal E}_2 \inn \R^3 \times [t_0,\infty) ,
\ee
where ${\mathcal E}_2$ is defined in \eqref{defE2} and
$$
 N(\tilde \phi )  =  (U_2 + \tilde \phi)^5 - U_2^5 - 5\, U_2^4 \, \tilde\phi.
$$
Recalling that $w_\mu = \mu^{-{1\over 2}} w ({r\over \mu})$, we let
\begin{equation}\label{defVmu}
V[\lambda ] (r,t) =
5 \left( U_2^4 - w_\mu^4 \right) \eta_R + 5 U_2^4 \left(1-\eta_R \right)
\end{equation}
and write
$
5 U_2^4  = 5 w_\mu^4  \eta_R + V[\lambda ] (r,t).
$
A main observation we make is that $\tilde \phi $ solves Problem \equ{equ2prima} if the tuple $(\psi, \phi)$  solves the following coupled system of nonlinear equations
\begin{align} \label{equpsi}
\partial_t \psi &=  \Delta \psi +  V [\la] \psi  +
  [2\nabla \eta_{R} \nabla_x \hat \phi +  \hat \phi (\Delta_x-\partial_t)\eta_{R} ]\nonumber \\
&+   N [\lambda ]( \tilde \phi  ) +  {\mathcal E}_{21}  + {\mathcal E}_{22} (1-\eta_R ) \inn \R^3 \times [t_0,\infty),
\end{align}
and
\be\label{equ2}
\partial_t \hat \phi =  \Delta \hat \phi+ 5  w_\mu^4 \hat \phi + 5 w_\mu^4 \psi +  {\mathcal E}_{22} \inn  B_{2R\mu_{0}} (0)  \times [t_0,\infty).
\ee
We refer to \eqref{defE2} for the definition of ${\mathcal E}_{21}$ and ${\mathcal E}_{22}$.
In terms of $\phi$, see \eqref{martin1},  equation \eqref{equ2} becomes
\begin{align} \label{equ3}
\mu_{0}^2 \partial_t  \phi  =&  \Delta_y \phi +  5 w^4 \phi
+
 \mu_{0}^{\frac{5}2}  {\mathcal E}_{22}  (\mu_{0} y,t) +  5 \,  { \mu_{0}^{\frac{1}2} \over (1+\Lambda )^4 } \, w^4 ( {y \over (1+ \Lambda )^2}  ) \psi(\mu_{0} y,t)
\\
&+  B[\phi] + B^0 [\phi]  \inn  B_{2R} (0)  \times [t_0,\infty)   \nonumber
\end{align}
where
\begin{equation}\label{gajardo1}
 B[\phi]:= \mu_{0} \, ( \partial_t \mu_{0} ) \, \left(\frac {\phi }2  + y\cdot \nabla_y \phi \right)
 \end{equation}
and
\begin{equation}\label{gajardo2}
B^0 [\phi ] := 5 \left[ w^4 \left( {y \over (1+ \Lambda )^2}  \right) -
w^4 (y) \right] \, \phi +5 \, \left( {1-(1+ \Lambda)^4 \over (1+ \Lambda)^4 }  \right) \, w^4 \left( {y \over (1+ \Lambda )^2}   \right)
\phi.
\end{equation}
We call \equ{equpsi} the {\em outer problem} and  \equ{equ3} the {\em inner problem(s) }.

\medskip
We next describe precisely our strategy to solve \eqref{equpsi}-\eqref{equ3}. For given parameter $\lambda$ satisfying \eqref{lambdabound}, and function $\phi$ fixed in a suitable range, we first solve for $\psi$ the outer Problem \eqref{equpsi}, in the form of a (nonlocal) nonlinear operator $\psi = \Psi ( \lambda , \phi )$. This is done in full details in Section \ref{outerP}.

\medskip
We then replace this $\psi$ in equation \eqref{equ3}.
At this point we consider the  change of variable,
$$
t = t (\tau ) , \quad
{dt \over d\tau } = \mu_{0}^2  (t) ,
$$
that reduces \eqref{equ3} to
\begin{equation} \label{equ3331}
 \partial_\tau  \phi  =  \Delta_y \phi + 5w^4  \phi
+ H [ \psi , \lambda , \phi ] (y,t (\tau ) ) , \quad y \in B_{2R} (0), \quad \tau \geq \tau_0
\end{equation}
where $\tau_0 $ is such that $t (\tau_0 ) = t_0$, and
\begin{align}\label{defHH}
 H [\psi , \lambda , \phi ](y,t (\tau ) ) &=  \mu_{0}^{\frac{5}2}  {\mathcal E}_{22} (\mu_{0} y,t) +  5 \,  { \mu_{0}^{\frac{1}2} \over (1+ \Lambda)^4 } \, w^4 ( {y \over (1+ \Lambda)^2 }  ) \psi(\mu_{0} y,t) \nonumber \\
&
+  B[\phi] + B^0 [\phi]
\end{align}
Next step is to construct a solution  $\phi$ to Problem \eqref{equ3331}. We can do this for functions $\phi$ which furthermore satisfy
\begin{equation} \label{equ3332}
 \phi(y, \tau_0 ) = e_{0} Z(y) , \quad y \in B_{2R} (0),
\end{equation}
for some constant $e_{0}$. Here $Z$ is the positive radially symmetric bounded eigenfunction
associated to the only negative eigenvalue $\la_0$ to the problem
\begin{equation}
\label{eigen0}
L_0 (\phi ) + \lambda \phi = 0 , \quad  \phi \in L^\infty(\R^3).
\end{equation}
Here $L_0$ is the linear operator around the standard bubble $w$ in $\R^3$. We refer to \eqref{defL0} for the definition of $L_0$. Furthermore, it is known that $\la_0 $ is simple and  $Z$
 decays like
$$Z (y) \sim  |y|^{-1} e^{-\sqrt{|\la_0 |}\,  |y|}  \quad {\mbox {as}} \quad |y| \to \infty. $$

\medskip
To be more precise, we prove that Problem \eqref{equ3331}-\eqref{equ3332} is solvable in $\phi$, provided that in addition
 the parameter  $\lambda$ is chosen so that   $H [ \psi,  \lambda , \phi ] (y,t (\tau ) ) $ satisfies the orthogonality condition
\begin{align}\label{gajardo3}
\int_{B_{2R} } H [ \psi , \lambda , \phi ](y,t (\tau ) )  Z_0 (y)  dy &= 0, \quad {\mbox {for all}} \quad  t>t_0.
\end{align}
We recall that $Z_0 (y) $, defined in \eqref{Z0def},  is the only bounded radial element
in the kernel of the linear elliptic operator $L_0$.

\medskip
Equation \eqref{gajardo3} becomes a non-linear, non-local problem in $\la$, for any fixed $\phi$. We attack this problem in Sections \ref{secpar},
\ref{nonlocal}, \ref{secpar1}. In Section \ref{secpar}, we get the precise form of Equation \eqref{gajardo3} as a non local non linear operator in $\la$. The principal part of the operator in $\la$ defined by Equation \eqref{gajardo3} is a linear non-local operator which turns out to be a perturbation of the ${1\over 2}$-Caputo derivative. We refer to \cite{caputo} for the original definition  of Caputo derivatives. In Section \ref{nonlocal} we develop an invertibility theory for such linear operator. In Section \ref{secpar1} we fully solve Equation \eqref{gajardo3} in $\la$, by means of a Banach fixed point argument. The solution $\la = \la [\phi]$ is a non linear operator in $\phi$, and we also
 describe the Lipschitz dependence of $\la $ with respect to $\phi$, which is a key property for our final argument.

At this point, one realizes that a central point of our complete proof is to design a linear theory that allows us to solve in $\phi$ Problem \eqref{equ3331}-\eqref{equ3332}.
To this purpose,
we shall construct a solution to an initial value problem of the form
\be \label{p110nuovo}
\phi_\tau  =
\Delta \phi +5w^4  \phi + h(y,\tau )  \inn B_{2R} \times (\tau_0, \infty ) , \quad \phi(y,\tau_0) = e_0Z (y)  \inn B_{2R}.
\ee
And then we solve Problem \eqref{equ3331}-\eqref{equ3332} by means of a contraction mapping argument.

\medskip
Let $a$ be a fixed number with $a\in (0,2)$, and let $\nu >0$ so that, for $t$ large,
$$ \tau^{-\nu } \sim \mu_0^{3\over 2} t^{-1} , \quad {\mbox {if}}  \quad \gamma \not= 2, \quad {\mbox {and}}  \quad \tau^{-\nu } \sim \mu_0^{3\over 2} t^{-1+\nu'} , \quad
{\mbox {if}} \quad \gamma =2,
$$ for some $\nu' >0$ that can be fixed arbitrarily small.
We solve \eqref{p110nuovo} for functions $h$  with $\| h \|_{ \nu , 2+a}$-norm  bounded, where
 \be \label{minchia}
\|h\|_{ \nu, 2+a} := \sup_{ \tau >\tau_0 , y \in \R^3 }    \tau^{\nu} (1+ |y|^{2+a} ) \,\Biggl[  \|h\|_{\infty ,B(y,1) \times [\tau,\tau +1]}
+[ h]_{0,\sigma , B(y,1) \times [\tau,\tau +1]} \Biggl] ,
\ee
and we construct solutions $\phi$  in the class of functions with $\| \phi \|_{\nu, a}$-norm bounded, where
\begin{eqnarray}\label{normphi}
\| \phi \|_{\nu , a} &:= \sup_{ \tau>\tau_0 , y \in \R^3 }  \tau^\tau (1+ |y|^a )  \left[ \,  \|\phi \|_{\infty ,B(y,1) \times [\tau,\tau +1]} +   [ \phi]_{0,\sigma , B(y,1) \times [\tau,\tau +1]}\right] \nonumber \\
&+ \sup_{ \tau >\tau_0 , y \in \R^3 }  \tau^\nu (1+ |y|^{1+a} )  \left[ \,  \|\nabla \phi \|_{\infty ,B(y,1) \times [\tau,\tau +1]} +   [ \nabla \phi]_{0,\sigma , B(y,1) \times [\tau,\tau +1]}\right]
\end{eqnarray}

\medskip
We have the validity of the following result

\begin{proposition} \label{prop0}
Let $\nu,a$ be given positive numbers with $0<a <2$. Then, for all sufficiently large $R>0$ and  function $h=h(y,\tau)$,   with $h (y, \tau ) = h(|y| , \tau )$ and  $\|h\|_{\nu, 2+a} <+\infty$
that satisfies
\be
 \int_{B_{2R}} h(y ,\tau)\, Z_0 (y) \, dy\ =\ 0  \foral \tau\in (\tau_0, \infty)
\label{ortio}\ee
there exist  $\phi \in C^{2+2\sigma , 1+\sigma}$-loc., which is radial in $y$, and $e_0 $ which solve Problem $\equ{p110nuovo}$. Moreover, $\phi= \phi[h]$, and $e_0 = e_0 [h]$  define linear operators of $h$
that satisfy the estimates
\be
  |\phi(y,\tau) |  \ \leq C  \  \tau^{-\nu} \, \frac {R^{4-a}} { 1+ |y|^3} \,   \|h \|_{\nu, 2+a}     , \quad
  |\nabla_y \phi(y,\tau) |  \ \leq C   \  \tau^{-\nu} \, \frac {R^{4-a}} { 1+ |y|^4} \,   \|h\|_{\nu, 2+a}     ,
\label{cta1g}\ee
and
$$
 | e_0[h]| \, \leq C \,  \|h\|_{ \nu, 2+a},
$$
for some fixed constant $C$.
\end{proposition}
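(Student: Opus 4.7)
The strategy I would take is to exploit the spectral structure of the linear operator $L_0 = \Delta + 5w^4$ in the radial class: $L_0$ has exactly one negative eigenvalue $\lambda_0 < 0$ (so $L_0 Z = -\lambda_0 Z$ with $-\lambda_0 > 0$, i.e.\ an exponentially unstable mode for the parabolic flow), a one-dimensional kernel spanned by the bounded element $Z_0$ of \eqref{Z0def}, and is negative semidefinite on the orthogonal complement of $\mathrm{span}\{Z, Z_0\}$ with a spectral gap. The free parameter $e_0$ in the initial datum is used to annihilate the exponentially unstable $Z$-mode, while the orthogonality condition \eqref{ortio} is precisely what prevents secular (in $\tau$) growth of the neutral $Z_0$-mode.

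The first step is the explicit elimination of the unstable mode. Extending $h$ by zero outside $B_{2R}$, write $\phi(y,\tau) = p(\tau) Z(y) + \phi^\perp(y,\tau)$ with $\phi^\perp \perp Z$ in $L^2(\R^3)$; projecting the equation onto $\mathrm{span}\{Z\}$ yields the scalar ODE $p'(\tau) = -\lambda_0 p(\tau) + q(\tau)$ with $q(\tau) = \|Z\|_{L^2}^{-2} \int_{B_{2R}} h(y,\tau) Z(y)\,dy$ and $p(\tau_0) = e_0$. Since $-\lambda_0 > 0$ I choose
\begin{equation*}
e_0 := -\int_{\tau_0}^\infty e^{\lambda_0(s - \tau_0)}\, q(s)\,ds,
\end{equation*}
so that $p(\tau) = \int_\tau^\infty e^{\lambda_0(s-\tau)} q(s)\,ds$. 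The exponential decay of $Z$ makes $\int h Z\,dy$ integrable against the weight $(1+|y|^{2+a})^{-1}$ \emph{uniformly in} $R$, giving $|q(\tau)| \lesssim \tau^{-\nu} \|h\|_{\nu,2+a}$; hence $|e_0| \lesssim \|h\|_{\nu,2+a}$ and $|p(\tau)| \lesssim \tau^{-\nu}\|h\|_{\nu,2+a}$ immediately.

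For the orthogonal component $\phi^\perp$, which now satisfies the equation with the $Z$-projection of $h$ removed and zero initial datum, I would establish the weighted pointwise estimate of \eqref{cta1g} by a contradiction/blow-up argument in the spirit of the authors' previous works. Assume a sequence $h_n$ with $\|h_n\|_{\nu,2+a} \to 0$ but corresponding $\phi_n^\perp$ with $\|\phi_n^\perp\|_{\nu,a} = 1$; pick $(y_n, \tau_n)$ where the weighted sup is essentially attained, rescale, and use parabolic Schauder estimates to extract a nontrivial radial bounded limit $\phi_\infty$ of $L_0 \phi_\infty = 0$ on $\R^3$. The only such limit is a multiple of $Z_0$, and passing the orthogonality condition \eqref{ortio} to the limit via a Green's identity (integrated in $\tau$) forces this multiple to vanish, a contradiction. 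Once the a priori estimate holds, existence of $\phi$ follows from a standard approximation scheme (truncate $\tau \in [\tau_0, T]$ with Dirichlet data on $\partial B_{2R}$, solve, pass to $T \to \infty$), linearity being manifest from the construction; the gradient bound comes from interior parabolic estimates on unit-scale parabolic cylinders.

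The main technical obstacle will be capturing the precise $R^{4-a}/(1+|y|^3)$ spatial profile, rather than only an $L^\infty$ or $L^2$ bound. Three regimes must be handled in the blow-up procedure, depending on whether $|y_n|$ stays bounded, escapes but satisfies $|y_n| = o(R)$, or $|y_n| \sim R$. The inner regime produces the limit problem on $\R^3$ ruled out by orthogonality as above; the intermediate regime, where the bubble has essentially no effect, reduces to the heat equation and is controlled by the $(1+|y|^{2+a})^{-1}$ decay of $h$ (the gain of two powers reflects the elliptic inversion of $L_0$, well-defined on the $Z_0$-orthogonal subspace); the boundary regime $|y| \sim R$ is where the $R^{4-a}$ factor is generated, matching explicit radial barriers built from $|y|^{-1}$ and $|y|$ (the two radial solutions of $L_0 \phi = 0$ at infinity) against the worst-case size of $h$ near $|y| = 2R$. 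Assembling these regimes via barrier comparison in weighted norms yields the claimed bound.
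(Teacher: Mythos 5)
Your treatment of the unstable direction is essentially the paper's: the paper first solves the projected problem \eqref{p11} with a Lagrange multiplier $c(\tau)Z$ and then removes it by adding $e(\tau)Z$ with $e$ the unique bounded (backward-integrated) solution of $e'-\la_0 e=c$, which fixes $e_0=e(\tau_0)$ and gives $|e_0|\lesssim\|h\|_{\nu,2+a}$ exactly as you argue. (Your formula for $p(\tau)$ has a sign slip, and projecting onto $Z$ over $B_{2R}$ produces $O(e^{-\Gamma R})$ boundary terms you do not mention, but both are harmless.)

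The genuine gap is in the mechanism producing the weighted bound \eqref{cta1g}. Your compactness argument is normalized in the $\|\cdot\|_{\nu,a}$ norm, i.e.\ it targets the weight $(1+|y|)^{-a}$, which is not the statement: the proposition asserts decay $(1+|y|)^{-3}$ at the price of the amplification $R^{4-a}$. Your proposed upgrade via ``barriers built from $|y|^{-1}$ and $|y|$'' cannot deliver this profile: the radial homogeneous solutions of $L_0\phi=0$ behave like $1$ and $|y|^{-1}$ at infinity, and $(1+|y|)^{-3}$ is not a supersolution against a source of size $(1+|y|)^{-(2+a)}$ with $a<2$ (in $\R^3$ one has $\Delta(|y|^{-3})=6|y|^{-5}>0$, so $-\Delta$ of the barrier is negative while the source decays more slowly). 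The paper obtains the $(1+|y|)^{-3}$ decay by a conjugation trick with no analogue in your sketch: it first replaces $h$ by $H=L_0^{-1}[h]$ (which lies in the $(1+|y|)^{-a}$ class precisely because of \eqref{ortio}), solves the parabolic problem with right-hand side $H$, proves $|\Phi|\lesssim R^{4-a}(1+|y|)^{-1}$, $|\nabla\Phi|\lesssim R^{4-a}(1+|y|)^{-2}$, $|D^2\Phi|\lesssim R^{4-a}(1+|y|)^{-3}$ by energy estimates (using the coercivity $Q(\phi,\phi)\ge \beta R^{-2}\int\phi^2$ on $\{Z\}^\perp$, whence the $R$ powers) together with rescaled interior Schauder estimates, and then sets $\phi:=L_0[\Phi]$, which solves the original problem and inherits $|L_0[\Phi]|\lesssim R^{4-a}(1+|y|)^{-3}$ from the second-derivative bound. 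Separately, your claim that the orthogonality \eqref{ortio} ``passes to the limit via a Green's identity'' and kills the $Z_0$-component of the blow-up limit is not justified as stated: since $Z_0\sim|y|^{-1}$ is not localized, testing the equation against $Z_0$ on $B_{2R}$ yields boundary terms at $|y|\sim 2R$ of size comparable to the quantity being controlled — this near-kernel at scale $R$ is exactly the origin of the $R^{4-a}$ loss, and without quantifying it the contradiction step does not close.
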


\medskip
We postpone the proof of this Proposition to Section \ref{inner0}.  Section \ref{final} is devoted to solve Problem \eqref{equ3331}-\eqref{equ3332} and this concludes the proof of Theorem \ref{teo1}.

\setcounter{equation}{0}
\section{Solving the outer problem}\label{outerP}

The aim of this section is to solve the {\it outer problem} \eqref{equpsi} for given parameter $\lambda  $ satisfying \eqref{lambdabound}, and for given small functions $\phi$,
in the form of a nonlinear nonlocal operator
$$
\psi (x,t) = \Psi [ \lambda, \phi] (x,t).
$$
We recall that $\phi^{in} (x,t)=    \eta_{R} (x,t) \hat \phi  (x,t)
 $
with
$$
\hat \phi (x,t) := \mu_{0}^{-\frac{1} 2} \phi \left (\frac {x} {\mu_{0}} , t     \right ), \quad {\mbox {and}} \quad
\eta_{R} (x,t) = \eta \left (\frac {x} {R \mu_{0}}    \right ) .
$$
Here $\eta(s)$ is defined in \eqref{defeta}, and number $R$ is a sufficiently large number, independent of $t$.
We assume that
\begin{equation}\label{phibound}
\| \phi \|_{\nu , a} \quad {\mbox {is bounded}}.
\end{equation}

\medskip
\noindent
Let $\varphi_0 : (0,\infty ) \to (0, \infty) $ be a  smooth and bounded  given function with the property that
\begin{equation}\label{defvarphi0}
 \varphi_0 (s) = \left\{ \begin{matrix}
s & \quad {\mbox {for}} \quad s\to 0^+ \\
{1\over s^3} & \quad {\mbox {for}} \quad s\to \infty
\end{matrix}\right. .
\end{equation}
We introduce the following $L^\infty$-weighted norms for functions $f = f(r,t)$
\begin{eqnarray}\label{norm88}
\| f \|_{** } :=  \| f \|_1 + \| D f \|_2
\end{eqnarray}
\begin{eqnarray}\label{norm2}
\| f \|_{1} :=  \sup_{x \in \R^3, t>t_0}  \mu_0^{-{1\over 2}} \, t^{1\over 2}  \, \varphi_0^{-1}  ({r\over \sqrt{t}})  && \Biggl[  \| f \|_{\infty , B(x,1)\times [t, t+1]} \nonumber \\
&+&  \left[ f \right]_{0, \sigma , B(x,1)\times [t, t+1]} \Biggl], \quad r=|x|.
\end{eqnarray}
\begin{eqnarray}\label{norm3}
\| f \|_{2} :=  \sup_{x \in \R^3, t>t_0}  \mu_0^{-{1\over 2}} \, t  \, (\varphi_0' )^{-1}  ({r\over \sqrt{t}})  && \Biggl[  \| f \|_{\infty , B(x,1)\times [t, t+1]} \nonumber \\
&+&  \left[ f \right]_{0, \sigma , B(x,1)\times [t, t+1]} \Biggl], \quad r=|x|.
\end{eqnarray}
Refer to \eqref{anita1} and \eqref{anita2} for the definitions of $ \| f \|_{\infty , B(x,1)\times [t, t+1]}$ and $\left[ f \right]_{0, \sigma , B(x,1)\times [t, t+1]}$.

\begin{proposition}\label{propext} Assume that $ \lambda $ satisfies \eqref{lambdabound}, and that the function $\phi$ satisfies the bound \eqref{phibound}. Let $ \psi_0 \in C^2 (\R^3 )$, radially symmetric so that
 \begin{equation}\label{gg1}
 |y|\,| \psi_0 (y) | + |y| \, |\nabla  \psi_0 (y) | \leq t_0^{-a} e^{-b |y|} , \quad
 \end{equation}
 for some positive constants $a$ and $b$. There exists $t_0$ large so that Problem \eqref{equpsi} has
 a unique solution $\psi = \Psi [ \lambda , \phi ]$
 so that
\begin{equation}\label{marte2}
\psi (r, t_0 ) = \psi_0 (r), \quad
\| \psi \|_1 + \| D \psi \|_2 \leq C.
\end{equation}
\end{proposition}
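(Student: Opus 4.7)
The plan is to solve \eqref{equpsi} by a Banach contraction argument in the space
\[
X_{**} = \{ \psi \in C^{0}(\R^3\times[t_0,\infty)) : \psi \text{ radial in } x,\ \|\psi\|_{**} = \|\psi\|_1+\|D\psi\|_2 <\infty \},
\]
treating the terms $V[\lambda]\psi$ and $N[\lambda](\psi+\phi^{in})$ as perturbations. First I would develop a linear theory for the Cauchy problem
\begin{equation}\label{linout}
\partial_t \psi = \Delta \psi + f \inn \R^3\times(t_0,\infty),\qquad \psi(\cdot,t_0)=\psi_0,
\end{equation}
showing that if $\|f\|_*<\infty$ and $\psi_0$ satisfies \eqref{gg1}, then $\|\psi\|_{**}\leq C(\|f\|_*+1)$. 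The argument uses Duhamel's formula together with pointwise bounds on the heat kernel, the key structural observation being that the weights in $\|\cdot\|_*$ and $\|\cdot\|_1$ are exactly matched by parabolic scaling: near the origin the source $f\sim \mu_0^{1/2}t^{-1}/r$ gives $\psi\sim \mu_0^{1/2}r/t$, while in the outer self-similar zone $f\sim \mu_0^{1/2}\sqrt{t}/r^3$ gives $\psi\sim \mu_0^{1/2}\sqrt{t}/r^3$. These can be verified either by splitting the kernel integral according to $|y|\lesssim \sqrt{t-s}$ or not, or by constructing an explicit barrier of the form $\mu_0^{1/2}t^{-1/2}\varphi_0(r/\sqrt t)$ and using a maximum principle. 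The H\"older and gradient pieces in $\|\cdot\|_{**}$ are obtained from interior parabolic Schauder estimates on unit space-time balls, on which the weights are slowly varying.

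Next I set up the nonlinear scheme. Given $\psi\in X_{**}$ with $\|\psi\|_{**}\leq K$, define $\mathcal{T}(\psi)$ to be the solution of \eqref{linout} with initial datum $\psi_0$ and source
\[
f(\psi) := V[\lambda]\psi + \bigl[2\nabla\eta_R\cdot\nabla_x\hat\phi + \hat\phi(\Delta_x-\partial_t)\eta_R\bigr] + N[\lambda](\psi+\phi^{in}) + \bar{\mathcal E}_2,
\]
where $\bar{\mathcal E}_2 = \mathcal{E}_{21}+(1-\eta_R)\mathcal{E}_{22}$. By Lemma \ref{des2}, $\|\bar{\mathcal E}_2\|_* \leq C$. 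The cutoff correction is supported in $\{R\mu_0<|x|<2R\mu_0\}$, where the decay of $\phi$ controlled by $\|\phi\|_{\nu,a}$ together with the rescaling in \eqref{martin1} yields a contribution to $\|\cdot\|_*$ of order $R^{-a}\tau_0^{-\nu}\|\phi\|_{\nu,a}$, which is small for $R$ and $t_0$ large. The potential $V[\lambda]$ equals $5(U_2^4-w_\mu^4)\eta_R$ on the inner zone and $5U_2^4(1-\eta_R)$ outside, and one checks directly from \eqref{defVmu} that $\|V[\lambda]\psi\|_* \leq C t_0^{-\delta}\|\psi\|_{**}$ for some $\delta>0$, the smallness coming from the matching estimate \eqref{diffues}/\eqref{diffuesI}/\eqref{diffues2} in the region where $V$ is nontrivial. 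Finally, $N[\lambda](\psi+\phi^{in})$ is quadratic or higher in its argument and contributes $O(K^2+\|\phi^{in}\|^2)$ in the $\|\cdot\|_*$-norm. Choosing $K$ fixed and $t_0$ sufficiently large, the linear estimate of step one shows that $\mathcal{T}$ maps the ball $\{\|\psi\|_{**}\leq K\}$ into itself.

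The contraction property follows from the same ingredients applied to differences. For $\psi_1,\psi_2$ in the ball,
\[
f(\psi_1)-f(\psi_2) = V[\lambda](\psi_1-\psi_2) + \bigl[N[\lambda](\psi_1+\phi^{in})-N[\lambda](\psi_2+\phi^{in})\bigr],
\]
and both pieces have $\|\cdot\|_*$-norm bounded by $\mathbf{c}\|\psi_1-\psi_2\|_{**}$ with $\mathbf{c}<1$ for $t_0$ large, by the smallness gained on $V$ and on the sum $K+\|\phi^{in}\|$ entering the quadratic term. Banach's fixed point theorem produces a unique $\psi=\Psi[\lambda,\phi]$ with the stated bound \eqref{marte2}; its continuous (Lipschitz) dependence on $(\lambda,\phi)$ is inherited from the Lipschitz estimates \eqref{marte0001new}--\eqref{marte0002new} for $\bar{\mathcal E}_2$ and the analogous Lipschitz dependence of $V[\lambda]$ and $N[\lambda]$, which will be needed in later sections to close the full inner-outer gluing.

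The main obstacle is the sharp linear a priori bound $\|\psi\|_{**}\leq C\|f\|_*$ in which the weights on $\psi$, $\nabla\psi$ and $f$ must be tracked with the correct parabolic scaling. The gradient estimate at points in the intermediate zone $\mu_0\ll r\ll\sqrt t$ is particularly subtle, since $\varphi_0'(s)\to 1$ as $s\to 0$ whereas $\varphi_0(s)\sim s$, so the weight on $D\psi$ is genuinely stronger than what a naive differentiation of the bound on $\psi$ would give; this forces a direct analysis of $\nabla_x p(x-y,t-s)$ with separate treatment of short times $t-s \ll r^2$ and long times $t-s\gtrsim r^2$. Once this linear estimate is in hand, the contraction argument above is routine.
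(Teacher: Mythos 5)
Your proposal is correct and follows essentially the same route as the paper: a contraction in the ball $\{\|\psi\|_{**}\le K\}$, with the linear estimate obtained from the explicit barrier $\mu_0^{1/2}t^{-1/2}\varphi_0(r/\sqrt t)$ of Lemma \ref{lemacalor} and the maximum principle, smallness of $V[\lambda]$ from the matching estimates in the transition region, and quadratic smallness of $N$. The only cosmetic difference is that the paper absorbs $V[\lambda]\psi$ into the linear solution operator $T_o$ (checking that a multiple of the same barrier remains a supersolution for $\partial_t\psi\ge\Delta\psi+V\psi+f$) rather than iterating it as part of the source, which changes nothing of substance.
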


\begin{proof}
Let $f$ be a given function with  $\| f \|_{*}$-norm bounded. Classical parabolic estimates give that
any solution to $\partial_t \psi = \Delta \psi + f$ is locally $C^{2+2\sigma , 1+\sigma}$.
Furthermore, consequence  of Lemma \ref{lemacalor} is that  the function
$\bar \varphi_0 (r,t) = \mu_0^{1\over 2} t^{-{1\over 2}} \varphi_0 ({r\over \sqrt{t}})$ is a positive supersolution for
$
\partial_t \psi \geq \Delta \psi + f(r,t).
$ Observe also that $\bar \varphi_0 (r,t_0 ) \geq \psi_0 (r)$.
Combining these facts with the maximum principle, we see that, for a function $f$ with $\| f \|_{*}$-norm bounded, the unique  solution to $\partial_t \psi = \Delta \psi + f$,  with $\psi( r,t_0) =  \psi_0$, has
$\| \psi \|_{**}$-norm bounded.
We claim that a possibly large multiple of $\bar \varphi_0$ works as a supersolution also for Problem
\begin{equation}\label{marte8}
\partial_t \psi \geq \Delta \psi + V (r,t) \psi + f(r,t).
\end{equation}
Indeed, recalling the definition of $V$ in \eqref{defVmu}, we write
$$
V= V_1 + V_2, \quad V_1 =
5 \left( U_2^4 - w_\mu^4 \right) \eta_R , \quad V_2 =5 U_2^4 \left(1-\eta_R \right).
$$
In the region where $\eta_R \not= 0$, namely when $r < 2R \mu_0$, we expand in Taylor the function $V_1$ and we find $s^* \in (0,1)$ so that
$$
V_1 (r,t) = 20 \left(w_\mu + s^* (\mu_0' \Psi_1 (r, t) + \phi_0 (r,t) ) \right)^3 [  \mu_0' \Psi_1 (r , t) + \phi_0 (r,t) ) ] \eta_R.
$$
From here, we see that, in this region,
$
|V_1 (r,t) | \lesssim  R t^{-1}  \, \eta_R,
$
so that
\begin{equation}\label{marte6}
|V_1 (r,t)  \psi_0 (r,t) |  \lesssim  \mu_0^{1\over 2} t^{-{3\over 2}}\,  h_0 ({r\over \sqrt{t}} ) .
\end{equation}
Let us now consider $V_2$. This function is not zero only when $r > R \mu_0 $, and in this region we have that
$
| V_2 (r,t) | \lesssim {\mu_0^2 \over r^4} \, (1-\eta_R ),
$
so that
\begin{equation}\label{martes7}
|V_2 (r,t) \psi_0 (r,t)| \lesssim {\mu^2 \over r^4} \mu_0^{1\over 2} t^{-{1\over 2} } \varphi_0 \left( {r\over \sqrt{t}} \right) \, (1-\eta_R)
 \lesssim R^{-2} \mu_0^{1\over 2} t^{-{3\over 2} } h_0 ({r\over \sqrt{t}}) .
\end{equation}
Choosing $R$ large, but independent of $t$, we thus find that a multiple of $\bar \varphi_0$ is a supersolution for \eqref{marte8}.

\medskip
We call  $T_o: (f , \psi_0 ) \to \psi$ the linear operator that to any
$f$ with $\| f \|_{*}$-norm bounded  and any initial condition $\psi_0$ satisfying \eqref{gg1} associates the unique solution  to
\begin{equation}\label{vai}
\partial_t \psi = \Delta \psi + V[\lambda ] (r,t) \psi + f(r,t) , \quad \psi (r,t_0) =\psi_0 (r) ,
\end{equation}
which has bounded
 $\| \psi \|_{** }$-norm. Define $\bar \psi = T_o (0, \psi_0)$.
We observe that  $\psi + \bar \psi $ is a solution to \eqref{equpsi} if $\psi $ is a fixed point for the operator
\begin{equation}
\label{marte3}
{\mathcal A}_o (\psi ) = T_o \left(  [2\nabla \eta_{R} \nabla_x \hat \phi +  \hat \phi (\Delta_x-\partial_t)\eta_{R} ]
+   N [\lambda ]( \tilde \phi +\bar \psi  ) +  {\mathcal E}_{21}  + {\mathcal E}_{22} (1-\eta_R ) \right)
\end{equation}
We shall show the existence and uniqueness of such fixed point as consequence of the Contraction Mapping Theorem.
We perform a fixed point argument in  the set of functions $\psi$ in
\begin{equation}\label{marte4}
B_o = \{ \psi \in L^\infty \, : \, \| \psi \|_{** } < r \}
\end{equation}
for some  $r>0$.

\medskip
\noindent
From Lemma \ref{des1} we have that
 there exists a constant $c_1$ so that
\begin{equation}\label{marte00bu}
\| {\mathcal E}_{21} + {\mathcal E}_{22} (1-\eta_R )  \|_{*} \leq c_1.
\end{equation}
We now claim that there exists constant $c_2$ such that, if the parameter $\lambda $ satisfies \eqref{lambdabound}, and if the function $\phi$ satisfies the bound \eqref{phibound}, then

\begin{equation}
\label{marte02}
\left\| 2\nabla \eta_{R} \nabla_x \hat \phi +  \hat \phi (\Delta_x-\partial_t)\eta_{R}
\right\|_{*} + \left\|   N ( \tilde \phi +\bar \psi ) \right\|_{*} \leq c_2
\end{equation}

\medskip
Furthermore, we claim that there exists a constant ${\bf c}  \in (0,1)$ so that, for any $\psi_1$, $\psi_2 \in B_0$,
\begin{equation}\label{marte03}
\left\| {\mathcal A}_o (\psi_1 ) - {\mathcal A}_o (\psi_2 ) \right\|_{** } \leq {\bf c} \, \| \psi_1 - \psi_2 \|_{**}.
\end{equation}

\medskip
If we assume, for the moment, the validity of \eqref{marte00bu}, \eqref{marte02} and \eqref{marte03}, we get the existence of a fixed point for problem \eqref{marte3} in the set \eqref{marte4}, provided $r$ is chosen large enough.

\medskip
\noindent

\medskip
\noindent
Proof of \eqref{marte02}. \ \ We start with the estimate of the first term in \eqref{marte02}.
Since we assume the validity of the bound \eqref{phibound} on $\phi$, we write
$$
\left| \hat \phi \Delta_x \eta_R \right| \lesssim {|\eta'' ({|x| \over R \mu_0 }) |\over R^2 \mu_0^2} \, |\hat \phi |
\lesssim {|\eta'' ({|x| \over R \mu_0 })| \over R^2 \mu_0^2} {\mu_0^{3\over 2} t^{-1} \over (1+ |{x\over \mu_0}|^a)} \, \| \phi \|_{\nu , a}
$$
see \eqref{normphi} for the notation $\| \phi \|_{\nu , a}$. Thus, we get
\begin{equation*}
\left| \hat \phi \Delta_x \eta_R \right| \lesssim   {|\eta'' ({|x| \over R \mu_0 }) |\over R^{2+a}  } \mu_0^{-{1\over 2} } t^{-1} {r \over \sqrt{t}} h_0 ({r\over \sqrt{t}})  \, \| \phi \|_{\nu , a} \lesssim   {|\eta'' ({|x| \over R \mu_0 }) |\over R^{1+a}  } \mu_0^{1\over 2 } t^{-{3\over 2}}  h_0 ({r\over \sqrt{t}})  \, \| \phi \|_{\nu , a}
\end{equation*}
$$
\lesssim  \mu_0^{1\over 2 } t^{-{3\over 2}} h_0 ({r\over \sqrt{t}} )  {\| \phi \|_{\nu , a} \over R^{1+a} } .
$$
Arguing similarly, we get
\begin{equation*}\label{gg2}
\left| \hat \phi \partial_x \eta_R \right| \lesssim   \mu_0^{1\over 2 } t^{-{3\over 2}} h_0 ({r\over \sqrt{t}} )  {\| \phi \|_{\nu , a} \over R^{1+a} } , \quad {\mbox {and}} \quad
\left| \nabla \hat \phi \nabla  \eta_R \right| \lesssim   \mu_0^{1\over 2 } t^{-{3\over 2}} h_0 ({r\over \sqrt{t}} )  {\| \phi \|_{\nu , a} \over R^{1+a} },
\end{equation*}
which proves the $L^\infty$ bound in the first estimate in \eqref{marte02}. To check the H\"{o}lder bound for this term, we focus the analysis on the term $g(x,t):= \hat \phi \Delta_x \eta_R $. The others terms can be treated in a similar way.
We write
\begin{align*}
{|g(x_1 , t_1 ) - g(x_2 , t_2) | \over |x_1 - x_2|^{2\sigma} + |t_1 - t_2|^\sigma}& =
| \Delta_x \eta_R (x_1 , t_1 ) | {|\hat \phi (x_1 , t_1 ) - \hat \phi (x_2 , t_2) | \over |x_1 - x_2|^{2\sigma} + |t_1 - t_2|^\sigma}\\
&+| \hat \phi (x_2 , t_2) | {| \Delta_x \eta_R (x_1 , t_1 ) - \Delta_x \eta_R (x_2 , t_2) | \over |x_1 - x_2|^{2\sigma} + |t_1 - t_2|^\sigma}
\end{align*}
In order to control the first term, we use the definition in \eqref{normphi} of $\| \phi \|_{\nu , a}$ and we argue as before. The second term can be easily treated using the $L^\infty$-bound on $\hat \phi$ and the smoothness of the function $\Delta_x \eta_R$. This complete the analysis of the first estimate in \eqref{marte02}.

We continue with the proof of the second estimate in \eqref{marte02}. We recall that $
 N(\tilde \phi )  =  (U_2 + \tilde \phi)^5 - U_2^5 - 5\, U_2^4 \, \tilde\phi.
$ It is convenient to estimate this function in three different regions: where $r < {\bar M}^{-1} \mu_0$, where ${\bar M}^{-1} \mu_0 < r < \bar M \sqrt{t}$ and where
$r > \bar M \sqrt{t}$, with $\bar M$ a large positive number.

From the definition of $U_2$ in \eqref{defU2}, we see that, if $r < {\bar M}^{-1} \mu_0$, then
$$
|N(\tilde \phi ) | \lesssim \mu_0^{-{3\over 2}} |\tilde \phi|^2 \lesssim \mu_0^{-{3\over 2}}  \left[ |\psi |^2 + |\eta_R \hat \phi |^2\right].
$$
We recall that
\begin{equation}\label{nonso0}
\left| \psi \right| \lesssim  \| \psi \|_{**} \, \mu_0^{1\over 2} t^{-{1\over 2}} \varphi_0 ({r\over \sqrt{t}} )
, \quad
\left| \eta_R \hat \phi \right| \lesssim \mu_0^{3\over 2} t^{-1} |\eta_R |  \, \| \phi \|_{\mu , a}
\end{equation}
so that we get, for $r < {\bar M}^{-1} \mu_0$,
\begin{equation}\label{nonso1}
|N(\tilde \phi +\bar \psi ) | \lesssim \mu_0^2 t^{-1} \left[  \| \psi + \bar \psi \|_{**}^2  +  \| \phi \|_{\mu , a}^2 \right] \, \left( \mu_0^{1\over 2} t^{-{3\over 2}} h_0 ({r\over \sqrt{t}} ) \right)
\end{equation}
Let us now consider the region ${\bar M}^{-1} \mu_0 < r < \bar M \sqrt{t}$. Here, after a Taylor expansion, we get that
$$
\left| N(\tilde \phi +\bar \psi ) \right| \lesssim w_\mu^3 \left[ |\psi +\bar \psi |^2 + |\eta_R \hat \phi |^2\right]\lesssim {\mu_0^{3\over 2} \over r^3}
\left[ |\psi |^2 + |\eta_R \hat \phi |^2\right].
$$
Using again \eqref{nonso0}, we obtain, for ${\bar M}^{-1} \mu_0 < r < \bar M \sqrt{t}$,
\begin{equation}\label{nonso2}
|N(\tilde \phi + \bar \psi ) | \lesssim \mu_0^2 t^{-1} \left[  \| \psi + \bar \psi \|_{**}^2  +  \| \phi \|_{\mu , a}^2 \right] \, \left( \mu_0^{1\over 2} t^{-{3\over 2}} h_0 ({r\over \sqrt{t}} ) \right).
\end{equation}
Let us now consider $r > \bar M \sqrt{t}$. Observe that in this region $\eta_R = 0$, $|(\psi + \bar \psi )(r,t) | \lesssim \mu_0^{1\over 2} t^{-{1\over 2}} \varphi_0 ({r \over \sqrt{t} })$
and, from \eqref{maso1}, also $|U_2 (r,t) | \lesssim {\mu_0 \over r} $. Thus we have
\begin{equation}\label{nonso3}
\left| N(\tilde \phi +\bar \psi ) \right| \lesssim \left({\mu_0 \over r} \right)^5 \lesssim \mu_0^{9 \over 2} t^{-{1\over 2}} \, \left( \mu_0^{1\over 2} t^{-{3\over 2}} h_0 ({r\over \sqrt{t}} ) \right).
\end{equation}
From \eqref{nonso1}, \eqref{nonso2}, \eqref{nonso3}, we get the $L^\infty$ bound for the second estimate in \eqref{marte02}.

\medskip
\noindent
Proof of \eqref{marte03}. \ \ For any $\psi_1$, $\psi_2 \in B_o$, we have that
$$
{\mathcal A}_o (\psi_1 ) - {\mathcal A}_o (\psi_2 ) = T_0 \left( N(\psi_1 +  \bar \psi +\phi^{in} ) - N (\psi_2 +\bar \psi + \phi^{in} ) \right)
$$
thus
$$
\| {\mathcal A}_o (\psi_1 ) - {\mathcal A}_o (\psi_2 )  \|_{** } \leq C \|N(\psi_1 + \bar \psi + \phi^{in} ) - N (\psi_2 + \bar \psi +\phi^{in} )\|_{* }.
$$
We write
\begin{align*}
N(\psi_1 + \phi^{in} ) - N (\psi_2 + \phi^{in} ) &= (U_2 + \psi_1 +  g )^5 - (U_2 + \psi_2 + g )^5 - 5 U_2^4 (\psi_1 - \psi_2) \\
&= \underbrace{(U_2 + \psi_1 + g )^5 - (U_2 + \psi_2 + g )^5 - 5 (U_2 + g )^4 (\psi_1 - \psi_2)}_{:=N_1} \\
&  + \underbrace{ 5 [(U_2 + g )^4  -   U_2^4]  (\psi_1 - \psi_2) }_{:=N_2} , \quad g := \phi^{in} + \bar \psi
\end{align*}
In the region where $r< \bar M \sqrt{t}$, we have that
$$
|N_1 (x,t) | \lesssim w_\mu^3 |\psi_1 - \psi_2|^2
$$
which yields to
$$
\left| N_1 (x,t) \right| \lesssim \mu_0^2 t^{-1} \left[ \| \psi_1 - \psi_2 \|_{**}^2 \right] \, \left( \mu_0^{1\over 2} t^{-{3\over 2}} h_0 ({r\over \sqrt{t}} ) \right).
$$
while $N_2$ can be estimated as
$$
\left| N_2 (x,t) \right| \lesssim \left[ \mu_0^2 t^{-1} \| \bar \psi \|_{**} + \mu_0^2 t^{-1} \, \, \| \phi^{in} \|_{\nu, a} \right] \| \psi_1 - \psi_2 \|_{** }  \left( \mu_0^{1\over 2} t^{-{3\over 2}} h_0 ({r\over \sqrt{t}} ) \right).
$$
On the other hand, if $r > \bar M \sqrt{t}$, we have that $\phi^{in} \equiv 0$, so that
$$
\left| N_2 (x,t) \right| \lesssim  \mu_0^2 \| \bar \psi \|_{**}  \| \psi_1 - \psi_2 \|_{** }  \left( \mu_0^{1\over 2} t^{-{3\over 2}} h_0 ({r\over \sqrt{t}} ) \right).
$$
On the other hand $N_1$ can be estimates
as follows
$$
\left| N_1 (x,t) \right| \lesssim |\psi_1 - \psi_2 |^5, \quad {\mbox {from which}} \quad \left| N_1 (x,t) \right|  \lesssim \mu_0^2  \, \, \mu_0^{1\over 2} t^{-{1\over 2}} \, h_0 ({r\over \sqrt{t}} ) \| \psi_1 - \psi_2 \|_{**}.
$$
In summary, we get that
$$
\| N(\psi_1 + \phi^{in} +\bar \psi ) - N (\psi_2 + \phi^{in} +\bar \psi ) \|_{*, \beta} \leq C \mu_0^2  \, \| \psi_1 - \psi_2 \|_{**}
$$
where $C = \max \{ \| \psi_1 - \psi_2 \|_{**} , \| \phi^{in} \|_{\nu, a} \}$. Thus we get the validity of \eqref{marte03} provided that $t_0$ is large enough.

\end{proof}

\medskip
\begin{remark}\label{rmk1}
Proposition \ref{propext} defines the solution to Problem \eqref{equpsi} as a function of the initial condition $\psi_0$, in the form of an operator $ \psi = \bar \Psi [\psi_0]$, from a small neighborhood of $0$ in the Banach space $L^\infty (\Omega)$ equipped with the  norm
\begin{equation}\label{gg2}
 \sup_{y \in \R^3} \left[ |y| \,  | e^{b |y|} \psi_0 (y) | +|y| \,  | e^{b |y|} \nabla \psi_0 (y) |\right]
\end{equation}
 into the Banach space of functions $\psi \in L^\infty (\Omega)$ equipped with the norm
$ \| \psi \|_{**}$ , defined in \eqref{norm88}.
A closer look to the proof of Proposition \ref{propext}, and the Implicit Function Theorem give that $\psi_0 \to \bar \Psi [\psi_0]$ is a diffeomorphism, and that
$$
\| \bar \Psi [\psi_0^1 ] - \bar \Psi [\psi_0^2] \|_{**} \leq c \left[
\sup_{y \in \R^3} \left|  |y| \, e^{b |y|} [  \psi_0^1 - \psi_0^2 ] \right| +
\sup_{y \in \R^3} \left|  |y| \, e^{b |y|} [ \nabla \psi_0^1 - \nabla \psi_0^2 ] \right|  \right],
$$
for some positive constant $c$.
\end{remark}

\medskip

\begin{proposition}\label{propext1} Assume the validity of the  assumptions of Proposition \ref{propext}. Then the function $\psi = \Psi ( \lambda, \phi)$ depends smoothly on $ \lambda $ and $\phi$, and we have the validity of the following estimates: for any initial time $t_0$ in Problem \eqref{p} sufficiently large, and any sufficiently large radius $R$ in the cut off function $\eta_R$ introduced  in \eqref{deftildephi} and there exist ${\bf c} $ such that, given  $\lambda_1 $, $\lambda_2$ satisfying \eqref{lambdabound} one has
\begin{equation}\label{voto1}
\|  \Psi [\lambda_1, \phi ] - \Psi [\lambda_2, \phi ] \|_{**} \leq {\bf c} \| \lambda_1 - \lambda_2 \|_\sharp
\end{equation}
 and for any $\phi$ satisfying \eqref{phibound}. Moreover, given $\phi_1$, $\phi_2$ satisfying \eqref{phibound}, one has
\begin{equation}\label{voto2}
\|  \Psi [\lambda, \phi_1 ] - \Psi [\lambda, \phi_2 ] \|_{**} \leq {\bf c} \| \phi_1 - \phi_2 \|_{\nu,a}
\end{equation}
for any $\lambda$ satisfying \eqref{lambdabound}.
\end{proposition}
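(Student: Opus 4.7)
The plan is to reduce each Lipschitz bound to a linear parabolic equation for the difference of two outer solutions, whose source we can control by \eqref{marte0001}--\eqref{marte0002} and \eqref{marte0001new}--\eqref{marte0002new}, and then invert using exactly the \emph{a priori} estimate that drove the fixed-point argument of Proposition~\ref{propext}. For \eqref{voto1}, fix $\phi$ and set $\psi_i:=\Psi[\lambda_i,\phi]$, $i=1,2$. Since $\mu_0$ (hence the cut-off $\eta_R$ and the term $2\nabla\eta_R\nabla_x\hat\phi+\hat\phi(\Delta_x-\partial_t)\eta_R$) depends on $t$ only and not on $\lambda$, subtracting the two copies of \eqref{equpsi} shows that $\Delta\psi:=\psi_1-\psi_2$ solves
\begin{equation*}
\partial_t(\Delta\psi)=\Delta(\Delta\psi)+V[\lambda_1]\,\Delta\psi+\mathcal R,\qquad \Delta\psi(\cdot,t_0)=0,
\end{equation*}
with
\begin{equation*}
\mathcal R=(V[\lambda_1]-V[\lambda_2])\,\psi_2+\bigl(\bar{\mathcal E}_{2}[\lambda_1]-\bar{\mathcal E}_{2}[\lambda_2]\bigr)+\bigl(N[\lambda_1](\psi_1+\phi^{in})-N[\lambda_2](\psi_2+\phi^{in})\bigr).
\end{equation*}

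Three ingredients then control $\|\mathcal R\|_*$. Lemma~\ref{des2} directly bounds the $\bar{\mathcal E}_2$-difference by ${\bf c}\|\lambda_1-\lambda_2\|_\sharp$ in the $\|\cdot\|_*$ norm. For the potential I use that $V[\lambda]$ depends on $\lambda$ only through $\mu=\mu_0(1+\Lambda)^2$ (and through $\phi_0$, which is linear in $\bar\alpha$); since $\Lambda_1-\Lambda_2=\int_t^{\infty}(\lambda_1-\lambda_2)$ satisfies the pointwise bound $|\Lambda_1-\Lambda_2|\lesssim\mu_0\|\lambda_1-\lambda_2\|_\sharp$, a Taylor expansion of $V$ of the form exploited in \eqref{marte6}--\eqref{martes7}, paired with the \emph{a priori} bound $\|\psi_2\|_{**}\leq C$ from Proposition~\ref{propext}, yields $\|(V[\lambda_1]-V[\lambda_2])\psi_2\|_*\leq {\bf c}\|\lambda_1-\lambda_2\|_\sharp$, with ${\bf c}$ carrying a negative power of $t_0$. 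Finally I split
\begin{equation*}
N[\lambda_1](\psi_1+\phi^{in})-N[\lambda_2](\psi_2+\phi^{in})=\bigl(N[\lambda_1](\psi_1+\phi^{in})-N[\lambda_1](\psi_2+\phi^{in})\bigr)+\bigl(N[\lambda_1](\psi_2+\phi^{in})-N[\lambda_2](\psi_2+\phi^{in})\bigr);
\end{equation*}
the first piece is bounded in $\|\cdot\|_*$ by ${\bf c}\|\Delta\psi\|_{**}$ exactly as in the Lipschitz estimate \eqref{marte03}, while the second piece is Lipschitz in $\lambda$ through $U_2$ and is estimated by ${\bf c}\|\lambda_1-\lambda_2\|_\sharp$ by the same Taylor argument.

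Now applying to this equation the linear estimate behind $T_o$ constructed in the proof of Proposition~\ref{propext} (with zero initial datum, so in particular $\bar\psi$ drops out), we obtain
\begin{equation*}
\|\Delta\psi\|_{**}\leq {\bf c}\bigl(\|\lambda_1-\lambda_2\|_\sharp+\|\Delta\psi\|_{**}\bigr),
\end{equation*}
and absorbing the second right-hand term into the left proves \eqref{voto1}; the smallness of ${\bf c}$ follows by taking $t_0$ large, as recorded in Remark~\ref{rrr1new}. For \eqref{voto2} the setup is the same but simpler: $V[\lambda]$, ${\mathcal E}_{21}$ and ${\mathcal E}_{22}$ do not move, so the only sources of $\Delta\psi:=\Psi[\lambda,\phi_1]-\Psi[\lambda,\phi_2]$ are the cut-off commutator $2\nabla\eta_R\nabla_x(\widehat{\phi_1-\phi_2})+(\widehat{\phi_1-\phi_2})(\Delta_x-\partial_t)\eta_R$, which was already estimated in \eqref{marte02} by $\,{\bf c}\,R^{-(1+a)}\|\phi_1-\phi_2\|_{\nu,a}$ and is therefore small for large $R$, and the $\phi^{in}$-difference inside $N$, which is handled exactly as in the Lipschitz step \eqref{marte03}.

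The main technical obstacle is obtaining a genuinely small Lipschitz constant for $(V[\lambda_1]-V[\lambda_2])\psi_2$: one has to track that the $\|\cdot\|_\sharp$ norm of $\lambda_1-\lambda_2$ carries an extra factor $\mu_0 t^{-1}$, and that the outer bound $\|\psi_2\|_{**}$ encodes the $\mu_0^{1/2}t^{-1/2}\varphi_0(r/\sqrt t)$ profile, so that the product fits inside the $\|\cdot\|_*$ scaling $\mu_0^{1/2}t^{-3/2}h_0(r/\sqrt t)$ with a prefactor that vanishes as $t_0\to\infty$. Once this scaling bookkeeping is done, the whole argument is a routine reprise of the contraction scheme used in Proposition~\ref{propext}.
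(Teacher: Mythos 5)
Your proposal is correct and follows essentially the same route as the paper: form the difference $\bar\psi=\Psi[\lambda_1,\phi]-\Psi[\lambda_2,\phi]$ with zero initial datum, bound the source using the Lipschitz estimates of Lemmas \ref{des1}--\ref{des2} together with Taylor expansions of $V$ and $N$ in $\lambda$ (and of the cut-off commutator and $N$ in $\phi$ for \eqref{voto2}), and conclude via the linear a priori estimate/supersolution $\bar\varphi_0$. The only cosmetic difference is that the paper places the part of the nonlinearity that is Lipschitz in $\bar\psi$ into the potential (writing $V[\lambda_1]+N'[\lambda]$) rather than absorbing a ${\bf c}\|\bar\psi\|_{**}$ term at the end as you do; the two are equivalent.
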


\begin{proof}
Fix $\phi $ and define
 $\bar \psi = \psi [\lambda_1 , \phi ] - \psi [\lambda_2 , \phi ]$, for $\lambda_1$ and $\lambda_2$ satisfying \eqref{lambdabound}. Then $\bar \psi $ solves
 $$
 \partial_t \bar \psi = \Delta \bar \psi + \left( V[\lambda_1]  + N'[\lambda] \right) (\bar \psi ) + F , \quad \R^3 \times (t_0 , \infty ) , \quad \bar \psi (r, t_0 ) = 0
 $$
for  $\lambda = s \lambda_1 + (1-s) \lambda_2$, $s \in (0,1), $ where
 \begin{align*}
 F&= {\mathcal E}_{21} [\lambda_1] - {\mathcal E}_{21} [\lambda_2] + (1-\eta_R) \left[ {\mathcal E}_{22} [\lambda_1] - {\mathcal E}_{22} [\lambda_2] \right]\\
 &+ \left[ V[\lambda_1] - V [\lambda_2] \right] \psi_2 + \left[ N[\lambda_1] - N[\lambda_1] \right]  (\psi_2+ \phi^{in} )
 \end{align*}
 where $\psi_j= \psi [\lambda_j , \phi ]$, $j=1, 2$. From Lemma \ref{des1}, estimates \eqref{marte0001new}-\eqref{marte0002new}, we get that
 $$
 \| {\mathcal E}_{21} [\lambda_1] - {\mathcal E}_{21} [\lambda_2]  \|_* \leq {\bf c} \| \lambda_1 - \lambda_2 \|_\sharp
 $$
 and
 $$
 \| (1-\eta_R) \left[ {\mathcal E}_{22} [\lambda_1] - {\mathcal E}_{22} [\lambda_2] \right] \|_*
\leq {\bf c} \| \lambda_1 - \lambda_2 \|_\sharp,
 $$
 provided $t_0$ is large enough.
 One also checks that, for some ${\bf c} \in (0,1)$
 $$
 \| \left[ V[\lambda_1] - V [\lambda_2] \right] \psi_2  \|_* \leq {\bf c} \| \lambda_1 - \lambda_2 \|_\sharp, \quad
\|\left[ N[\lambda_1] - N[\lambda_2] \right]  (\psi_2+ \phi^{in} )  \|_* \leq {\bf c} \| \lambda_1 - \lambda_2 \|_\sharp.
 $$
The constant $c_1$ can be made arbitrarily small  provided $t_0$ is large.
 Arguing as in \eqref{marte6} and \eqref{martes7}, one can show that a certain multiple of the function
 $\| \lambda_1 - \lambda_2 \|_\sharp \bar \varphi_0 (r,t)$, where $\bar \varphi_0  = \mu_0^{1\over 2} t^{-{1\over 2}} \varphi_0 ({r\over \sqrt{t}})$, serves as supersolution for $\bar \psi$. This proves \eqref{voto1}.

 \medskip
 Let us now fix $\lambda$, and take $\phi_1$, $\phi_2$ satisfying \eqref{phibound}. Denote by $\phi_j^{in} = \eta_R \hat \phi_j$,
 and $\hat \phi_j (x,t) = \mu_0^{-{1\over 2}} \phi_j ({x \over \mu_0} , t)$, for $j=1,2$, as natural. Let $\bar \psi =
 \psi (\lambda , \phi_1) - \psi (\lambda , \phi_2) $. We have $ \bar \psi (r, t_0 ) = 0 $ and
 \begin{align*}
 \partial_t \bar \psi &= \Delta \bar \psi + V[\lambda ] \bar \psi  + (\psi_1+ \phi_1^{in} )^5 - (\psi_2 + \phi_1^{in} )^5 \\
 &+ [2\nabla \eta_{R} \nabla_x ( \hat \phi_1 - \hat \phi_2)  +  ( \hat \phi_1 - \hat \phi_2) (\Delta_x-\partial_t)\eta_{R} ] \\
 &+(\psi_2+ \phi_1^{in} )^5 - (\psi_2 + \phi_2^{in} )^5 - 5 U_2^4 (\phi_1^{in} - \phi_2^{in} ).
 \end{align*}
 Arguing as in \eqref{gg1}-\eqref{gg2}, we get
 \begin{align*}
 \left|[2\nabla \eta_{R} \nabla_x ( \hat \phi_1 - \hat \phi_2)  +  ( \hat \phi_1 - \hat \phi_2) (\Delta_x-\partial_t)\eta_{R} ]  \right| & \leq
 \mu_0^{1\over 2 } t^{-{3\over 2}} h_0 ({r\over \sqrt{t}} )  {\| \phi_1 - \phi_2 \|_{\nu , a} \over R^{1+a} }\\
 &\leq {\bf c} \mu_0^{1\over 2 } t^{-{3\over 2}} h_0 ({r\over \sqrt{t}} )  \| \phi_1 - \phi_2 \|_{\nu , a}
 \end{align*}
 and also
 \begin{align*}
 \left| (\psi_2+ \phi_1^{in} )^5 - (\psi_2 + \phi_2^{in} )^5 - 5 U_2^4 (\phi_1^{in} - \phi_2^{in} ) \right|& \leq
 \mu_0^{1\over 2 } t^{-{3\over 2}} h_0 ({r\over \sqrt{t}} )  {\| \phi_1 - \phi_2 \|_{\nu , a} \over R^{1+a} }\\
 &\leq {\bf c} \mu_0^{1\over 2 } t^{-{3\over 2}} h_0 ({r\over \sqrt{t}} )  \| \phi_1 - \phi_2 \|_{\nu , a}.
 \end{align*}
The constant $c_1$ in the last two formulas can be made arbitrarily small   provided $R$ is chosen large enough. This concludes the proof.
\end{proof}

\setcounter{equation}{0}
\section{Choice of $\lambda$: Part I}\label{secpar}

Let $\psi = \Psi [\lambda, \phi]$ be the solution to Problem \eqref{equpsi} predicted by Proposition \ref{propext}, and satisfying the properties described in  Proposition \ref{propext1}.
We substitute $\psi$ in equations \eqref{equ3331} and \eqref{defHH}, and we want to solve, in $\phi$, Problem \eqref{equ3331}, satisfying the initial condition \eqref{equ3332}. As we stated in Proposition \ref{prop0}, Problem \eqref{equ3331}-\eqref{equ3332} can be solved for functions $\phi$ satisfying \eqref{phibound}, provided that
\begin{equation}
\label{na1}
\int_{B_{2R}} H [ \psi , \lambda , \phi ] (y, t (\tau )) \, Z_0 (y) \, dy =0, \quad {\mbox {for all}} \quad t > t_0,
\end{equation}
where $H [ \psi , \lambda , \phi ]$ is defined in \eqref{defHH}.

\medskip
\noindent
Next Lemma states that \eqref{na1} is a non linear, non local equation in $\la$, at any fixed $\phi$.

\begin{lemma}\label{l1} Assume that $ \lambda $ satisfies \eqref{lambdabound}, and that the function $\phi$ satisfies the bound \eqref{phibound}.
Let $ \psi = \Psi [\la , \phi]$ be the solution to Problem \eqref{equpsi} predicted by Proposition \ref{propext}. Then Equation \eqref{na1} is equivalent to
\begin{equation}
\label{na2}
\left[ 1 + \mu_0 \mu_0' b(t) + q_1 (\la ) \right] \phi_0 (0,t) = g(t) + G[\la , \phi ] (t).
\end{equation}
Here $\phi_0 $ is the function defined in \eqref{defphi0} and also in \eqref{duha}, thus
\begin{equation}\label{duha11}
\phi_0 (0,t) =\int_{t_0 -1}^t {1\over (4\pi (t-s) )^{3\over 2} } \, \int_{\R^3} e^{-{|y|^2 \over 4 (t-s) }} \, {\bar \alpha (s) \over \mu + |y|} \, {\bf 1}_{\{ r< M\}} \,  dy \, ds.
\end{equation}
The function $b= b(t)$ is a smooth function in $(t_0 , \infty)$. With $q_1 (s)$ we denote a smooth function so that $q_1(0) = 0 $, and $q_1' (0) \not= 0$. Moreover,
\begin{equation}\label{queen1}
\| b \|_\infty <C , \quad
\| g \|_\flat
\leq C , \quad \| G [\la , \phi ] \|_\flat
\leq C.
\end{equation}
Furthermore, if the initial time $t_0 $ in Problem \eqref{p} is chosen large enough, there exists $R$ in the definition of the cut off function in \eqref{deftildephi} sufficiently large and there exist constant  ${\bf c} \in (0,1) $ so that, for any $\phi$,
\begin{equation}\label{queen2} \| G[\la_1 , \phi ] - G[\la_2 , \phi ] \|_\flat \leq {\bf c} \| \la_1 - \la_2 \|_\sharp
\end{equation}
and, for any $\la $,
\begin{equation}\label{queen3} \| G[\la , \phi_1 ] - G[\la , \phi_2 ] \|_\flat \leq {\bf c} \| \phi_1 - \phi_2 \|_{\nu, a}.
\end{equation}
The constants ${\bf c}$ in \eqref{queen2} and \eqref{queen3} can be made as small as one needs, provided that the initial time $t_0$ is chosen large enough.
We refer to \eqref{alphabound} and \eqref{normphi} for the definitions of $\| \cdot \|_\flat$ and $\| \cdot \|_{\nu , a}$ respectively.
\end{lemma}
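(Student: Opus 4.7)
The plan is to substitute the definition \eqref{defHH} of $H[\psi,\lambda,\phi]$ into \eqref{na1} and track the four resulting contributions; the dominant one produces the $\phi_0(0,t)$-prefactor, and the rest is absorbed into $g(t)+G[\lambda,\phi](t)$. The key piece is the ${\mathcal E}_{22}$ contribution: writing
\[
{\mathcal E}_{22}=(U_1+\phi_0)^5-U_1^5=5U_1^4\phi_0+N_0(\phi_0)
\]
with $N_0$ quadratic in $\phi_0$, and using that on $|y|\le 2R$ the outer cutoff in \eqref{DefU} is inactive so that $U_1(\mu_0 y,t)=w_\mu(\mu_0 y)+O(\mu_0'\mu_0^{1/2})$, the rescaling identity $\mu_0^{5/2}w_\mu^4(\mu_0 y)=(1+\Lambda)^{-4}\mu_0^{1/2}w^4(y/(1+\Lambda)^2)$ reduces the leading term to an integral of $w^4(y/(1+\Lambda)^2)\phi_0(\mu_0 y,t)Z_0(y)$. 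The local $C^{2+2\sigma,1+\sigma}$ regularity of $\phi_0$ from \eqref{defphi0}--\eqref{defphi02} yields $\phi_0(\mu_0 y,t)=\phi_0(0,t)+O(\mu_0|y|)$, whose first-order piece vanishes against the radial $Z_0$. After dividing by the normalizing constant $c_*:=5\int_{\R^3}w^4 Z_0\,dy$, which is nonzero since $5w^4Z_0=-\Delta Z_0$ and $Z_0\sim 3^{1/4}/(2r)$ at infinity, this piece yields the coefficient $1+q_1(\lambda)$ of $\phi_0(0,t)$, with $q_1$ smooth and $q_1(0)=0$ absorbing the $\Lambda$-dependence of $w^4(y/(1+\Lambda)^2)$ and the $B_{2R}$-vs-$\R^3$ truncation.

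For the remaining three pieces in \eqref{defHH}: the term $B[\phi]$ from \eqref{gajardo1} carries an explicit $\mu_0\mu_0'$ factor, and after integration against $Z_0$ and closing the loop with the inner problem \eqref{equ3331}--\eqref{equ3332} (the scaling-mode value of $\phi$ near $y=0$ being tied to $\phi_0(0,t)/c_*$ through the Fredholm structure of Proposition \ref{prop0}) it produces the $\mu_0\mu_0' b(t)\phi_0(0,t)$ correction plus a $\phi$-dependent remainder; the term $B^0[\phi]$ from \eqref{gajardo2} depends smoothly on $\Lambda$ and vanishes at $\Lambda=0$, hence it contributes only to $q_1(\lambda)\phi_0(0,t)$ and to $G[\lambda,\phi]$; and the $\psi$-term, after splitting $\psi=\bar\Psi[\psi_0]+(\Psi[\lambda,\phi]-\bar\Psi[\psi_0])$ via Remark \ref{rmk1}, contributes the purely time-dependent source $g(t)$ from the first part and a controlled remainder in $G[\lambda,\phi]$ from the second, by Propositions \ref{propext}--\ref{propext1}.

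The bounds $\|b\|_\infty,\|g\|_\flat,\|G[\lambda,\phi]\|_\flat\le C$ of \eqref{queen1} follow from combining Lemmas \ref{des1}--\ref{des2} with \eqref{marte2} and the definition \eqref{alphabound} of $\|\cdot\|_\flat$, together with the scaling $\mu_0^{5/2-2}=\mu_0^{1/2}$ produced by the change of variable $x=\mu_0 y$. The Lipschitz estimates \eqref{queen2}--\eqref{queen3} are inherited term-by-term from \eqref{marte0001}--\eqref{marte0002}, \eqref{marte0001new}--\eqref{marte0002new}, and \eqref{voto1}--\eqref{voto2}; the constant ${\bf c}$ can be made arbitrarily small by choosing $t_0$ and $R$ sufficiently large, as in Remark \ref{rrr1new} and the last sentence of Proposition \ref{propext1}.

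The main technical obstacle is the identification of the $\mu_0\mu_0' b(t)\phi_0(0,t)$ correction: all contributions involving the scaling mode of $\phi$ (coming from $B[\phi]$, from the $\mu_0'\psi_1$ piece of $U_1$, and from higher-order Taylor terms of $\phi_0$) must assemble consistently into a single scalar multiple of $\phi_0(0,t)$, rather than into an independent quantity. This requires exploiting self-consistency with the inner equation \eqref{equ3331} at $y=0$, and also verifying en route that $q_1'(0)\ne 0$, the nondegeneracy needed to make \eqref{na2} a mildly perturbed nonlocal equation amenable to the fractional-Caputo-type inversion theory developed in Sections \ref{nonlocal}--\ref{secpar1}.
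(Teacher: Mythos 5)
Your overall skeleton (split \eqref{na1} into the four contributions of \eqref{defHH}, extract the $\phi_0(0,t)$-coefficient from the ${\mathcal E}_{22}$ term, dump the rest into $g+G$) matches the paper, but there is a genuine structural error in where you locate the $\mu_0\mu_0'\,b(t)\,\phi_0(0,t)$ correction. In the paper this term comes entirely from $i_1=\int_{B_{2R}}\mu_0^{5/2}{\mathcal E}_{22}(\mu_0y,t)Z_0\,dy$: expanding $5U_1^4\phi_0$ with $U_1=w_\mu+\mu_0'\psi_1$, the cross term $20\mu^{-2}w^3(y)\,\mu_0'\mu\,\Phi_1$ integrated against $Z_0\,\phi_0(0,t)$ produces exactly the $\mu_0\mu_0'q_0(\Lambda)\phi_0(0,t)$ piece of \eqref{div1}; the paper explicitly notes that $i_1$ does not depend on $\phi$. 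By contrast, the $B[\phi]$ term $i_3$ is merely estimated, $|\mu_0^{-1/2}i_3|\lesssim \mu_0\mu_0'R^{2-a}\,\mu_0^{3/2}t^{-1}\|\phi\|_{\nu,a}$, and absorbed into $G[\lambda,\phi]$. Your proposal to obtain the $\mu_0\mu_0'b(t)\phi_0(0,t)$ term from $B[\phi]$ by "closing the loop with the inner problem" and tying the value of $\phi$ near $y=0$ to $\phi_0(0,t)$ would be circular within the gluing architecture: Lemma \ref{l1} must hold for an \emph{arbitrary} fixed $\phi$ satisfying \eqref{phibound}, because $\lambda=\lambda[\phi]$ is solved first (Section \ref{secpar1}) and only afterwards is $\phi$ determined by a fixed point in Section \ref{final}. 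If deriving \eqref{na2} required $\phi$ to already solve \eqref{equ3331}, the contraction scheme could not be set up.

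The second gap is your treatment of $a_2=5U_1^4[\phi_0(\mu_0y,t)-\phi_0(0,t)]$. You invoke interior $C^{2+2\sigma,1+\sigma}$ regularity to write $\phi_0(\mu_0y,t)=\phi_0(0,t)+O(\mu_0|y|)$ and discard the first-order piece by radial symmetry. What is actually needed is a \emph{quantitative} modulus of continuity at the origin with the correct dependence on $\alpha(t)$ and $\mu_0(t)$, uniformly as $t\to\infty$; the source $\bar\alpha(s)/(\mu(s)+|y|)$ degenerates as $\mu\to0$, and the paper only establishes the H\"older-type bound \eqref{mmee2}, namely $\phi_0(\mu_0y,t)-\phi_0(0,t)=\alpha(t)|\mu_0y|^{\sigma}\Pi(t)\Theta(|y|)$ with $\sigma<1$, via a delicate direct analysis of the Duhamel formula in the Appendix (splitting the time integral at scales comparable to $(\mu_0|y|)^2$). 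This weaker estimate still suffices, since $\mu_0^{\sigma}\alpha(t)$ lands in the $\|\cdot\|_\flat$-controlled remainder, but your first-order expansion is both unproved and unnecessary. The remaining parts of your argument (the nonvanishing of $5\int w^4Z_0$, the splitting of the $\psi$-term into a $(\lambda,\phi)$-independent $g$ plus differences controlled by Propositions \ref{propext}--\ref{propext1}, and the term-by-term inheritance of the Lipschitz constants \eqref{queen2}--\eqref{queen3} from \eqref{marte0001new}--\eqref{marte0002new} and \eqref{voto1}--\eqref{voto2}) are consistent with the paper.
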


\medskip
\begin{proof}
Throughout the proof,  we denote by $q_i = q_i (s)$, for any interegr $i$, a smooth real function, with the property that ${d \over (d s)^j} q_i(0) = 0$, for $j <i$, and ${d \over (d s)^i} q_i(0)  \not= 0$.

We decompose
\begin{align*}
\int_{B_{2R}} H[\psi ,  \lambda , \phi ] (y, t (\tau )) \, Z_0 (y) \, dy &=
\mu_0^{5\over 2} \int_{B_{2R}} {\mathcal E}_{22} (\mu_{0} y,t) \, Z_0 (y) \, dy \\
&+ 5 \int_{B_{2R}}  \,  { \mu_{0}^{\frac{1}2} \over (1+ \la)^2 } \, w^4 ( {y \over 1+ \la}  ) \psi(\mu_{0} y,t) \, Z_0 (y) \, dy
 \\
&+ \int_{B_{2R}} B[\phi ] Z_0 (y) \, dy  \int_{B_{2R}} B^0 [\phi ] Z_0 (y) \, dy \\
&= i_1+ i_2 + i_3 + i_4.
\end{align*}
For any $j=1, \ldots , 4$, $i_j$ is a function of $t$, and depends also on $\la$ and $\phi$. To emphasize this dependence, we write
$i_j = i_j [\la , \phi ] (t)$.

We claim that
\begin{align}\label{div1}
\mu_0^{-{1\over 2}} \, i_1 [\la , \phi ] (t)  &= \mu_0^{2} \mu^{-2}  \Biggl[ \left( 5 \int_{B_{2R} }
w^4 (y) Z_0 (y) \, dy \right) \, \phi_0 (0,t) \\
&+  \left( q_{1}  (\lambda ) + \mu_0 \mu_0'  q_0 (\lambda ) \right) \, \phi_0 (0,t)  + \mu_0^{ \sigma}  \alpha (t) b (t) \Biggl],
\nonumber
\end{align}
where $b(t)$ is a smooth function in $(t_0 , \infty)$, which is uniformly bounded as $t \to \infty$.

\medskip
Observe that $i_1$ does not depend on $\phi$.
From the equation \eqref{defphi0} satisfied by $\phi_0$, and
Lemma \ref{lemacalor}, we get the existence of a positive constant $c$ so that $|\phi_0 (\mu_0 y , t ) |\leq c \alpha (t) \mu_0 (t)$
for any $y \in B_{2R}$.
Thus, we Taylor expand $ {\mathcal E}_{22}  $ in the region $y \in B_{2R}$ as follows
\begin{align*}
{\mathcal E}_{22} (\mu_{0} y,t) &= 5 U_1^4 \phi_0  + 4 (U_1 + s \phi_0 )^3 \phi_0^2 = a+b
\end{align*}
for some $s\in (0,1)$. Let us first analyze $a$. We write
\begin{align*}
a&= 5 \mu^{-2} w^4 (y) \phi_0 (0,t) +\underbrace{ 5 [ U_1^4 (\mu_0 y ) - \mu^{-2} w^4 (y) ] \phi_0 (0,t)}_{:=a_1} +
\underbrace{5 U_1^4 [ \phi_0 (\mu_0 y , t) - \phi_0 (0,t) ]}_{:=a_2}
\end{align*}
Observe that, by definition of $U_1$ in \eqref{DefU}, and \eqref{maso0}, we have
\begin{align*}
U_1^4 (\mu_0 y )- \mu^{-2} w^4 (y) & = \left[ w_\mu (\mu_0 y ) + \mu_0' \mu^{1\over 2} \Phi_1 ({\mu_0 r \over \mu } )\right]^4 - \mu^{-2} w^4 (y) \\
&= \mu^{-2} \left[ w (y) + \left( w ({y \over (1+\Lambda )^2} ) - w(y) \right)+ \mu_0' \mu \Phi_1 ({\mu_0 r \over \mu } ) \right]^4 - \mu^{-2} w^4 (y)\\
&=4 \mu^{-2} \, w^3 (y) s \left[ \left( w ({y \over (1+\Lambda )^2} ) - w(y) \right)+ \mu_0' \mu \Phi_1 ({\mu_0 r \over \mu } )\right]
\end{align*}
for some $s \in (0,1)$. Observe that
\begin{equation}\label{gi1}
w ({y \over (1+\Lambda )^2} ) - w(y) = \nabla w (y) \cdot y + \nabla w (y) \cdot y z  [-2\Lambda -\Lambda^2]
\end{equation}
for some $z \in (0,1).$
Taking into account also the description of $\Phi_1$ in \eqref{expaPhi1}, we get that
\begin{equation}\label{mmee1}
\int_{B_{2R}} a_1 Z_0 \, dy = \mu^{-2} \left[ q_1 (\Lambda )  + \mu_0 \mu_0' q_0 (\Lambda )  \right] \phi_0 (0,t).
\end{equation}
We next claim that, for $y \in B_{2R}$, we have
\begin{equation}\label{mmee2}
 \phi_0 (\mu_0 y , t) - \phi_0 (0,t) = \alpha (t) |\mu_0 y |^\sigma \, \Pi (t) \, \Theta (|y|) ,
 \end{equation}
 for some $\sigma \in (0,1)$. We postpone the proof of \eqref{mmee2} to the Appendix. We thus get
 \begin{equation}\label{mmee3}
 \int_{B_{2R}} a_2 Z_0 \, dy = \mu^{-2} \mu_0^\sigma \alpha (t)  b (t).
 \end{equation}
Collecting estimates \eqref{mmee1}-\eqref{mmee3} we get \eqref{div1}.

\medskip
We claim that
\begin{equation}\label{div22}
\mu_0^{-{1\over 2}} i_2 [ \la , \phi ] (t) = g(t) + G [\la , \phi ] (t)
\end{equation}
with
$$
\| g \|_\flat
\leq c , \quad \| G [\la , \phi ] \|_\flat
\leq c
$$
for some constant $c$. We refer to \eqref{alphabound} for the definition of $\| \cdot \|_\flat$. Furthermore, we claim that $G$ satisfies estimates \eqref{queen2} and \eqref{queen3}, for some constant $c_1 \in (0,1)$.
To prove the above assertion, we write
\begin{align*}
 \mu_0^{-{1\over 2}} i_2 [\la , \phi ] (t) &=
5  \int_{B_{2R}}  \, \, w^4 (y) \psi  [0, 0] (\mu_{0} y,t) \, Z_0 (y) \, dy \\
&+ 5  \int_{B_{2R}}  \, \, w^4 (y) [ \psi [\lambda , 0 ]  \, - \psi \,[0, 0 ] ] (\mu_{0} y,t) Z_0 (y) \, dy \\
&+ 5  \int_{B_{2R}}  \, \, w^4 (y) [ \psi [\la , \phi  ] - \psi [\la , 0 ] ] (\mu_{0} y,t) Z_0 (y) \, dy \\
&+ 5  \int_{B_{2R}} \, [ w^4 ( {y \over (1+ \Lambda )^2}  ) - w^4 (y) ] \psi [\la , \phi ] (\mu_{0} y,t)  Z_0 (y) \, dy \\
&+ 5  [{1 \over (1+ \Lambda)^4 } - 1] \int_{B_{2R}} \,  w^4 ( {y \over (1+ \Lambda )^2 }  ) \psi  [\la , \phi ] (\mu_{0} y,t) \, Z_0 (y) \, dy \\
&= \sum_{j=1}^5 g_j.
\end{align*}
The first term,
$$
g_1 (t) = 5  \int_{B_{2R}}  \, \, w^4 (y) \psi(\mu_{0} y,t) \,[0, 0] Z_0 (y) \, dy ,
$$
is an explicit smooth function, globally defined in $(t_0 , \infty)$, which satisfies the bound
\begin{equation}\label{gi2}
\| g_1 \|_\flat
\leq c \left( 5\int_{B_{2R}} w^4 (y) |y| Z_0 (y) \, dy \right)
\end{equation}
for some constant $c>0$, as direct consequence of \eqref{marte2}. Let us analyze the term $g_5$. We see that $g_5 = g_5 [\la , \phi ] (t)$. Let us first assume that $\la$ and $\phi$ are fixed. From \eqref{marte2}, we get
\begin{align*}
\left| g_5 (t) \right| & \leq c q_1 (\la )  \int_{B_{2R}} \left| w^4 (y) \psi [\la , \phi ]  (\mu_0 y , t ) Z_0 (y) \, \right| \, dy \leq c \mu_0^{3\over 2} t^{-1} q_1 (\la ) \int {|y| \over (1+ |y|^5) } \, dy.
\end{align*}
Using again \eqref{marte2} and the assumptions on $\la$ and on $\phi$, we get
$
[ g_5 ]_{0,\sigma , [t, t+1] }\leq c \mu_0^{3\over 2} t^{-1},
$ from which we conclude that $\| g_5 \|_\flat \leq c$, for some constant $c>0$.
Let us now fix $\phi$ and take $\la_1$, $\la_2$ satisfying \eqref{lambdabound}. We write
\begin{align*}
g_5 [\la_1 , \phi ]  - g_5 [\la_2 , \phi]  &= 5  [{1 \over (1+ \Lambda_1)^4 } - {1 \over (1+ \Lambda_2)^4 } ] \int_{B_{2R}} \,  w^4 ( {y \over (1+ \Lambda_1 )^2 }  ) \psi  [\la_1 , \phi ] (\mu_{0} y,t) \, Z_0 (y) \, dy \\
&+ 5  [ {1 \over (1+ \Lambda_2)^4 } -1 ] \int_{B_{2R}} \,  [ w^4 ( {y \over (1+ \Lambda_1 )^2 }  ) -
w^4 ( {y \over (1+ \Lambda_2 )^2 }  ) ] \psi  [\la_1 , \phi ] (\mu_{0} y,t) \, Z_0 (y) \, dy \\
&+  5  [ {1 \over (1+ \Lambda_2)^4 } -1 ] \int_{B_{2R}} \,
w^4 ( {y \over (1+ \Lambda_2 )^2 }  ) [  \psi  [\la_1 , \phi ]- \psi  [\la_2 , \phi ] ]  (\mu_{0} y,t) \, Z_0 (y) \, dy \\
&= e_1 + e_2 + e_3.
 \end{align*}
 Thanks to \eqref{lambdabound}, and arguing as before, we see that
 \begin{align*}
 |e_1 (t) | &\leq c |\Lambda_1 (t) - \Lambda_2 (t) | \int_{B_{2R}} \left| w^4 (y) \psi [\la_1 , \phi ]  (\mu_0 y , t ) Z_0 (y) \, \right| \, dy \\
 &\leq c \mu_0(t)^{3\over 2} t^{-1} \left( \int_t^\infty s^{-1} \mu_0 (s) \, ds \right) \| \la_1 - \la_2 \|_\sharp \\
 &\leq [\mu_0 (t_0) ]  \mu_0(t)^{3\over 2} t^{-1} \| \la_1 - \la_2 \|_\sharp
  \leq c_1  \mu_0(t)^{3\over 2} t^{-1} \| \la_1 - \la_2 \|_\sharp
 \end{align*}
 where $c_1$ is a positive number, which can be chosen arbitrarily small, in particular $c_1 <1$, provided $t_0$ is chosen large enough.
 Similarly one can show that, thanks to \eqref{lambdabound},
 $$
 [ e_1 ]_{0,\sigma , [t, t+1]} \leq c_1 \mu_0(t)^{3\over 2} t^{-1} \| \la_1 - \la_2 \|_\sharp.
 $$
 We thus can conclude that there exists a positive small number $c_1 <1$ so that
 $$
 \| e_1 \|_\flat \leq c_1 \| \la_1 - \la_2 \|_\sharp.
 $$
 A similar argument allow us to say that also $\| e_2 \|_\flat \leq c_1 \| \la_1 - \la_2 \|_\sharp$. We next analyze $e_3$.
 From \eqref{voto1} we get that
 \begin{align*}
 |e_3 (t) | & \leq \mu_0^{3\over 2} t^{-1} \left( \int w^4 (y) {|y| \over 1+ |y|} \, dy \right) \| \psi [\la_1 , \phi ] -\psi [\la_2 , \phi] \|_{**} \\
 &\leq c_1 \mu_0^{3\over 2} t^{-1} \| \la_1 - \la_2 \|_\sharp,
 \end{align*}
 and also
 $$
 [e_3]_{0,\sigma , [t,t+1]} \leq c_1 \mu_0^{3\over 2} t^{-1} \| \la_1 - \la_2 \|_\sharp,
 $$
 for some constant $c_1 \in (0,1)$. We can conclude that
 $$
 \| g_5 [\la_1 , \phi ]  - g_5 [\la_2 , \phi]  \|_\flat \leq c_1 \| \la_1 - \la_2 \|_\sharp.
 $$
 The same estimate can be obtained for $g_4$, arguing in a similar way.

 Let us now consider $g_2$. This term does not depend on $\phi$, namely $g_2 [\la, \phi ](t) = g_2 [\la ] (t)$. From Proposition \ref{propext1}, we get
 \begin{align*}
 \left| g_2 (t) \right|&\leq \mu_0^{3\over 2} t^{-2} \left( \int w^4 {|y| \over 1+ |y| } \, dy \right) \| \la \|_\sharp \leq c \mu_0^{3\over 2} t^{-2} \| \la \|_\sharp,
 \end{align*}
 and similarly
 \begin{align*}
 \left[ g_2 (t) \right]_{0,\sigma , [t,t+1]} &\leq c \mu_0^{3\over 2} t^{-2} \| \la \|_\sharp.
 \end{align*}
 Furthermore, if $t_0$ is large enough, there exists $c_1 \in (0,1)$ so that
 \begin{align*}
 \left| g_2 [\la_1 ] (t) - g_2 [\la_2] (t) \right| &\leq 5 \int_{\R^3} w^4 (y) \left\| \left[ \psi [\la_1 , 0] - \psi [\la_2, 0] \right] (\mu_0 y,t)\right| Z_0 dy \\
 &\leq C t_0^{-1} \mu_0^{3\over 2} t^{-2} \| \la_1 - \la_2  \|_\sharp \leq c_1 \mu_0^{3\over 2} t^{-2} \| \la_1- \la_2 \|_\sharp
\end{align*}
and also
$$
\left[ g_1 [\la_2 ] - g_2 [\la_2] \right]_{0,\sigma , [t,t+1]} \leq c_1 \mu_0^{3\over 2} t^{-2} \| \la_1- \la_2 \|_\sharp
$$
thanks to the results of Proposition \ref{propext1}. Arguing in the same way, one gets similar estimates for $g_3$.

Collecting all the above arguments, we conclude that $\mu_0^{-{1\over 2}} i_2 [\la , \phi ] (t)$ can be written as in
\eqref{div22}, with $g$ and $G$ satisfying \eqref{queen1}, \eqref{queen2} and \eqref{queen3}.

\medskip
Next we claim that
\begin{equation} \label{div33}
\mu_0^{-{1\over 2}} i_j [\la , \phi ] (t) = G [\la, \phi ] (t), \quad j=3,4,
\end{equation}
and $G$ satisfies \eqref{queen1}, \eqref{queen2} and \eqref{queen3}.
We start with $j=3$. First, we see that $i_3$ does not depend on $\la$, and it is linear in $\phi$. Since we are assuming that
$\phi$ satisfies \eqref{phibound}, we have
$$
\left| \mu_0^{-{1\over 2} } i_3 (t) \right| \leq \left(\mu_0 \mu_0' R^{2-a} \right)  \mu_0^{3\over 2} (t) t^{-1} \| \phi \|_{\nu, a}
\leq c \mu_0^{3\over 2} (t) t^{-1} \| \phi \|_{\nu, a}
$$
and
$$
\left[ \mu_0^{-{1\over 2} } i_3 (t) \right]_{0,\sigma , [t,t+1]}
\leq c \mu_0^{3\over 2} (t) t^{-1} \| \phi \|_{\nu, a}
$$
for some constant $c>0$. Let us know take $\phi_1$, and $\phi_2$, and we get that, if $\mu_0 (t_0 ) \mu_0'  (t_0)  R^{2-a}$ is small enough,
$$
\left| \mu_0^{-{1\over 2} } \left( i_3 [\phi_1] -  i_3 [\phi_2] \right) (t) \right| \leq c_1 \mu_0^{3\over 2} (t) t^{-1} \| \phi_1 - \phi_2  \|_{\nu, a}
$$
and
$$
\left[ \mu_0^{-{1\over 2} } \left( i_3 [\phi_1] -  i_3 [\phi_2] \right) (t) \right]_{0,\sigma , [t,t+1]} \leq c_1 \mu_0^{3\over 2} (t) t^{-1} \| \phi_1 - \phi_2  \|_{\nu, a}
$$
for some $c_1 \in (0,1)$.
Estimate \eqref{div33} for $j=4$ can be proved in a very similar way. We leave the details to the interested reader.
Combining \eqref{div1}, \eqref{div22} and \eqref{div33}, we complete the proof of \eqref{na2}. This concludes the proof of the Lemma.

\end{proof}

\setcounter{equation}{0}

\section{Solving a non local linear problem}\label{nonlocal}

Let $\phi_0$ be the function introduced in \eqref{defphi0}. Later in our argument we will need to solve in $\lambda$,  a non local equation of the form
\begin{equation}
\label{jun}\phi_0 (0,t) = h(t), \quad t \in (t_0 , \infty)
\end{equation}
for a certain right hand side $h$. We see from \eqref{duha11} that $\phi_0 (0,t)$, defined as
$$
\phi_0 (0,t) =\int_{t_0 -1}^t  \int_{\R^3} {\bar \alpha (s) \over (4\pi (t-s) )^{3\over 2} } \,  { e^{-{|y|^2 \over 4 (t-s) }}   \over \mu + |y|} \, {\bf 1}_{\{ |y| < M\}} \,  dy \, ds,
$$
 defines a non-local non-linear operator in $\la$.
 For convenience we recall that
$$
\alpha (t)  = 3^{1\over 4}  \, \mu_0^{-{1\over 2}} \, \left( \mu_0 \Lambda \right)', \quad  \bar \alpha (t) = \left\{ \begin{matrix}
\alpha (t_0)  & \quad {\mbox {for}} \quad t< t_0 \\
\alpha (t)  & \quad {\mbox {for}} \quad t \geq t_0
\end{matrix}\right. , \quad \Lambda (t) = \ \int_t^\infty \la (s) \, ds.
$$
We write
\begin{equation}\label{duha12}
\phi_0 (0,t) = T [\la ] (t)  + \hat T [\la ] (t ),
\end{equation}
where $T$ is
\begin{equation}\label{defT}
T[\la ] (t) =
\int_{t_0-1 }^t \int_{\R^3} {\bar \alpha (s) \over  (4\pi (t-s) )^{3\over 2}} \, {e^{-{|y|^2 \over 4 (t-s) }} \over |y|}  {\bf 1}_{\{ |y|< M \}} \, dz \, ds.
\end{equation}
We shall see that $\hat T$ is a small perturbation of $T$, in some sense we will precise later. In this section, we start with the analysis of  Problem
\begin{equation}\label{freddy}
T[\la ] (t) = h(t), \quad t>t_0.
\end{equation}

\medskip
\noindent
Straightforward computations  give that
\begin{equation}
\label{jun1}
T[\la ] (t)  = - {\bar \omega_3 \over 4} \int_{t_0 -1}^t {\bar \alpha (s) \over \sqrt{t-s}} \, \left( 1- e^{-{M^2 \over  (t-s)}}  \right) ds.
\end{equation}
Indeed,
 letting $z= {y \over 2\sqrt{t-s}}$, one gets
\begin{align}\label{defphi00}
 T[\la ] (t) &= \int_{t_0-1 }^t \int_{\R^3} {\bar \alpha (s) \over 2 \sqrt{t-s}} \, {e^{-|z|^2} \over |z|}  {\bf 1}_{\{ |z|< {M \over \sqrt{t-s}}\}} \, dz \, ds \nonumber \\
&={\bar \omega_3 \over 2} \int_{t_0-1 }^t \int_0^\infty {\bar  \alpha (s) \over \sqrt{t-s}} e^{-\rho^2} \rho  {\bf 1}_{\{ \rho < {M \over \sqrt{t-s}} \}} \,  d\rho \, ds ={\bar \omega_3 \over 4} \int_{t_0-1}^t {\bar  \alpha (s) \over \sqrt{t-s}} \, \int_0^{M \over \sqrt{t-s}} e^{-\rho^2} 2\rho \, d\rho   \nonumber \\
&=- {\bar \omega_3 \over 4} \int_{t_0-1}^t {\bar  \alpha (s) \over \sqrt{t-s}} \, \left( 1- e^{-{M^2 \over  (t-s)}}  \right) ds.
\end{align}

\medskip
\noindent
Introduce the function $\beta = \beta(t)$ as
\begin{equation}\label{defbeta}
\beta (t) = {\bar \omega_3 \over 4} \int_t^\infty \bar \alpha (s)\, ds .
\end{equation}
If  $\beta = \beta (t)$ solves
\begin{equation}\label{mp1}
\int_{t_0-1 }^t {\beta'  (s) \over \sqrt{t-s}} \, \left( 1- e^{-{M^2 \over  (t-s)}}  \right) ds = h(t),
\end{equation}
then the function  $\Lambda (t) = \int_t^\infty \la (s) \, ds$, defined as
\begin{equation}
\label{Lambdasol} \bar \omega \Lambda (t) = \mu_0^{-{1\over 2} } (t) \beta (t) + {\mu_0^{-1} (t) \over 2} \, \int_t^\infty \beta (s) \mu_0^{-{1\over 2}} \, \mu_0' (s) \, ds, \quad \bar \omega =  {\bar \omega_3 \over 4} 3^{1\over 4},
\end{equation}
solves \eqref{freddy}.

Next Lemma constructs a solution to \eqref{mp1}. If we formally let $M \to \infty$ in \eqref{mp1}, we get that the left hand side of \eqref{mp1} is nothing but the ${1\over 2}$-Caputo derivative of $\beta$. This fact inspires the proof of the following

\medskip
\begin{lemma} \label{lemaalpha}
	Let $\sigma = {1\over 2} + \sigma'$, with $\sigma' >0$ small, be the number fixed in \eqref{lambdabound}, and  $h:(t_0, \infty) \to \R$ a smooth function  satisfying
	\begin{equation}\label{mp0}
	\sup_{t> t_0} \,  \mu_0^{-{3\over 2} } t \left[  \|h \|_{0, [t,t+1]} + [h ]_{0,\sigma, [t,t+1]} \, \right] \leq C,
	\end{equation}
	for some constant $C$. Then
	there exist a constant $C_1$ and a unique smooth function $\beta : (t_0 -1 , \infty ) \to \R$ which solves \eqref{mp1}, $\beta \in C^1$ and satisfies the bounds
	\begin{equation}\label{mp2}
	\sup_{t> t_0} \, \mu_0^{-{3\over 2} } t \left[  \|\beta'  \|_{0, [t,t+1]} + [\beta'  ]_{0,\sigma, [t,t+1]} \, \right] \leq C_1 \, M^{-1}.
	\end{equation}
We recall that $M^2 = t_0$, was first introduced in \eqref{defphi0}.
\end{lemma}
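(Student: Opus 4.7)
The operator on the left of \eqref{mp1} is an $M$-dependent perturbation of the Caputo $\tfrac12$-derivative, and my plan is to exploit this structure together with a scaling argument that makes the $M^{-1}$ factor in \eqref{mp2} explicit. Decompose
$$L_0\beta'(t):=\int_{t_0-1}^t \frac{\beta'(s)}{\sqrt{t-s}}\,ds,\qquad L_1\beta'(t):=\int_{t_0-1}^t \frac{e^{-M^2/(t-s)}}{\sqrt{t-s}}\,\beta'(s)\,ds,$$
so that \eqref{mp1} reads $(L_0-L_1)\beta'=h$. The operator $L_0$ is, up to a factor $\sqrt\pi$, the Caputo $\tfrac12$-derivative, and the substitution $u=M^2v$ gives
$$\int_0^\infty \frac{1-e^{-M^2/u}}{\sqrt{u}}\,du=M\int_0^\infty \frac{1-e^{-1/v}}{\sqrt{v}}\,dv=c_0\,M,$$
which is the quantitative origin of the gain $M^{-1}$: on profiles slowly varying on scale $M^2$ the full operator multiplies by $\sim M$, so its inverse shrinks by $\sim M^{-1}$.

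\emph{Step 1 (inversion of $L_0$).} The Beta-integral identity $\int_s^t[(t-r)(r-s)]^{-1/2}\,dr=\pi$ gives $L_0\circ L_0=\pi J$ with $J$ the antiderivative from $t_0-1$, yielding the explicit inverse $L_0^{-1}f(t)=\tfrac{1}{\pi}\tfrac{d}{dt}\int_{t_0-1}^t f(s)/\sqrt{t-s}\,ds$. A standard splitting of this fractional integral into a near-diagonal piece (controlled by the H\"older norm of $f$ with the borderline exponent $\sigma=\tfrac12+\sigma'>\tfrac12$) and a far piece (controlled by the pointwise decay of $f$) shows that $L_0^{-1}$ is bounded from the space of right-hand sides defined by \eqref{mp0} to the space defined by the left-hand side of \eqref{mp2} \emph{without} the $M^{-1}$ factor.

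\emph{Step 2 (scaling and contraction).} Introduce $\tilde t=t/M^2$, $\tilde\beta(\tilde t)=\beta(M^2\tilde t)$, $\tilde h(\tilde t)=M\,h(M^2\tilde t)$, $\tilde\mu_0(\tilde t)=\mu_0(M^2\tilde t)$; a direct computation converts \eqref{mp1} into the $M$-\emph{independent} Volterra equation
$$\int_{1-M^{-2}}^{\tilde t}\frac{\tilde\beta'(\tilde s)}{\sqrt{\tilde t-\tilde s}}\bigl(1-e^{-1/(\tilde t-\tilde s)}\bigr)\,d\tilde s=\tilde h(\tilde t),\qquad \tilde t>1.$$
Under this scaling the hypothesis \eqref{mp0} on $h$ and the target bound \eqref{mp2} on $\beta'$ translate to matching uniform (in $M$) bounds on $\tilde h$ and $\tilde\beta'$ in a common weighted H\"older norm (the $M^{-1}$ factor disappears because the Jacobian $\tilde\beta'(\tilde t)=M^2\beta'(M^2\tilde t)$ absorbs it). Writing the rescaled equation as a fixed-point problem $\tilde\beta'=L_0^{-1}(\tilde h+\tilde L_1\tilde\beta')$ where $\tilde L_1$ is the ``$M=1$ version'' of $L_1$, the kernel $e^{-1/u}/\sqrt u$ is exponentially suppressed near $u=0$ and of bounded total $L^1$-mass; a direct estimate using this smallness shows that $L_0^{-1}\tilde L_1$ is a contraction in the weighted norm (for $t_0$ large enough), so Banach fixed point produces a unique solution $\tilde\beta'$, and undoing the rescaling gives \eqref{mp2} together with its uniqueness.

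\emph{Main obstacle.} The subtle step is the propagation of the H\"older exponent $\sigma>\tfrac12$ through the Caputo inversion $L_0^{-1}$, which is a borderline-smoothing operator: the hypothesis $\sigma=\tfrac12+\sigma'$ with $\sigma'>0$ in \eqref{lambdabound} is exactly what prevents a logarithmic loss in the fractional integral and allows the fixed-point iteration to close. A secondary technical point is the behaviour at the left endpoint $t_0-1$ of the integration, where the stipulated constancy $\bar\alpha\equiv\alpha(t_0)$ on $(t_0-1,t_0)$ (from \eqref{alphabar}) forces $\beta$ to be affine there; I would absorb this contribution into the rescaled right-hand side as a known linear functional of $\alpha(t_0)$ before running the iteration, so that the Banach argument only determines $\beta$ on $(t_0,\infty)$ and the $C^1$-matching at $t=t_0$ is automatic.
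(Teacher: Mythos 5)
Your opening heuristic --- that the full kernel $(1-e^{-M^2/u})/\sqrt u$ has total mass $c_0 M$, so that the operator acts to leading order as multiplication by $c_0M$ --- is exactly the mechanism behind the $M^{-1}$ gain, and your rescaling $t=t_0+M^2 b$ is also the paper's first step. But the inversion scheme you build on top of it has a genuine gap: the splitting $L_0-L_1$, with $L_1$ treated as a small perturbation of the Abel operator $L_0$, cannot work. The kernels $u^{-1/2}$ and $e^{-1/u}u^{-1/2}$ coincide asymptotically for large $u$, and for data and solutions with the slow algebraic decay prescribed by \eqref{mp0}--\eqref{mp2} the dominant contribution to both $L_0\tilde\beta'$ and $\tilde L_1\tilde\beta'$ comes precisely from $\tilde s$ far from $\tilde t$, where the two kernels agree; hence $\tilde L_1\tilde\beta'\approx L_0\tilde\beta'$ to leading order, and $L_0^{-1}\tilde L_1$ is close to the identity at low frequencies (its Laplace symbol $\mathcal{L}(K_1)(\xi)/\mathcal{L}(K_0)(\xi)\to 1$ as $\xi\to 0$) rather than a contraction. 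Relatedly, your assertion that $e^{-1/u}/\sqrt u$ has ``bounded total $L^1$-mass'' is false: it behaves like $u^{-1/2}$ at infinity. Only the \emph{difference} $(1-e^{-1/u})/\sqrt u\sim u^{-3/2}$ is integrable, and this cancellation --- which is the whole point --- is destroyed the moment you separate the two pieces.

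A secondary symptom of the same problem is the Step 1 claim that $L_0^{-1}$ maps the space \eqref{mp0} into the space \eqref{mp2}: the Abel inverse of a function with integrable nonzero mass decays only like $t^{-3/2}$ no matter how fast the datum decays, so this fails whenever $\mu_0^{3/2}t^{-1}$ decays faster than $t^{-3/2}$ (e.g.\ for $\gamma>4/3$). The paper avoids both issues by Laplace-transforming the \emph{full} rescaled kernel $K(\eta)=(1-e^{-1/\eta})/\sqrt\eta$, computing the exact symbol $1/(\xi\,\mathcal{L}(K)(\xi))$ --- which is $c_1/\xi+c_2/\sqrt\xi+O(1)$ at $\xi=0$ and $c_3/\sqrt\xi$ at infinity --- and writing the solution explicitly through the resolvent kernel $G$ with $G(t)=\tilde c_1+\tilde c_2 t^{-1/2}+O(t^{-1})$. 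There the constant part $\tilde c_1$ of $G$ realizes the ``multiplication by $c_0M$'' inverse and yields the decay-preserving leading term $\frac{\tilde c_1}{M}\int_t^\infty h$, while the integrable remainder $G-\tilde c_1$ carries the correction. If you want a perturbative argument, you must perturb off the multiplication operator $f\mapsto c_0Mf$ (equivalently, work with the full symbol as the paper does), not off the Caputo half-derivative.
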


\medskip
Observe that a direct consequence of this Lemma, together with \eqref{Lambdasol} and \eqref{defaaa} is
 the invertibility theory for Problem \eqref{freddy} that will be used in next Section to solve \eqref{na1}. This is contained in the following

\begin{proposition}\label{freddyprop}

The function $T : X_\sharp \to X_\flat$, defined in \eqref{defT} is a linear, non-local, homeomorphism so that
\begin{equation}\label{au1}
\| T^{-1} (h) \|_\sharp \leq C M^{-1} \| h \|_\flat, \quad {\mbox {for any}} \quad h \in X_\flat
\end{equation}
for some fixed positive constant $C$. We refer to \eqref{lambdabound} and to \eqref{Xsharp} for the definition of the $\| \cdot \|_\sharp$-norm and of the set $X_\sharp$, and to \eqref{alphabound} and \eqref{Xflat}
for the definition of the norm $\| \cdot \|_\flat$ and of the space $X_\flat$.
\end{proposition}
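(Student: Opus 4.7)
The plan is to reduce the problem to the scalar integral equation \eqref{mp1}, which is already solved by Lemma \ref{lemaalpha}, and then to translate the bounds obtained there for $\beta$ back to bounds for $\lambda$ via the explicit algebraic reductions laid out in the excerpt. Concretely, for $h \in X_\flat$, first I would observe that the computation \eqref{defphi00} rewrites $T[\lambda](t)$ purely in terms of the one-dimensional non-local operator acting on $\bar\alpha$. Introducing the antiderivative $\beta(t) = \tfrac{\bar\omega_3}{4}\int_t^\infty \bar\alpha(s)\,ds$, an integration by parts (noting that $\beta'= -\tfrac{\bar\omega_3}{4}\bar\alpha$) converts the equation $T[\lambda]=h$ into exactly \eqref{mp1} with the same right-hand side $h$.

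Next, I would verify that the hypothesis \eqref{mp0} of Lemma \ref{lemaalpha} is satisfied. This is immediate from the definition \eqref{alphabound} of $\|\cdot\|_\flat$: a function $h \in X_\flat$ automatically obeys
\[
\sup_{t>t_0} \mu_0^{-3/2}(t)\,t\,\bigl[\|h\|_{\infty,[t,t+1]} + [h]_{0,\sigma,[t,t+1]}\bigr] \le \|h\|_\flat,
\]
so Lemma \ref{lemaalpha} produces a unique $C^1$ solution $\beta$ of \eqref{mp1} with the gain
\[
\sup_{t>t_0} \mu_0^{-3/2}(t)\,t\,\bigl[\|\beta'\|_{\infty,[t,t+1]} + [\beta']_{0,\sigma,[t,t+1]}\bigr] \le C_1 M^{-1} \|h\|_\flat.
\]
Going back, $\bar\alpha = -\tfrac{4}{\bar\omega_3}\beta'$ enjoys the analogous estimate, hence $\alpha$ lies in $X_\flat$ with $\|\alpha\|_\flat \le C M^{-1}\|h\|_\flat$.

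Then I would recover $\Lambda$ and $\lambda$ through the chain $\alpha = 3^{1/4}\mu_0^{-1/2}(\mu_0\Lambda)'$ with $\Lambda(t)=\int_t^\infty\lambda(s)\,ds$. Using the explicit inverse formula \eqref{Lambdasol}, $\Lambda$ is determined pointwise by an integral of $\beta$ against the positive weight $\tfrac12\mu_0^{-1/2}\mu_0'$, which is absolutely integrable on $(t,\infty)$ in all three cases for $\mu_0(t)$ (recall \eqref{rho0def1}, \eqref{rho0def1gamma1}, \eqref{rho0def1gamma2}). Differentiating, $\lambda = -\Lambda'$ is then expressed as a linear combination of $\alpha/\mu_0^{1/2}$ and $\mu_0'\mu_0^{-1}\Lambda$; both pieces are of order $M^{-1}\mu_0 \, t^{-1}\|h\|_\flat$ once the explicit decay rates of $\mu_0,\mu_0'$ are inserted, which is precisely what is required by the $\|\cdot\|_\sharp$-norm in \eqref{lambdabound}.

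The main technical point to be careful about is the Hölder part of the $\|\cdot\|_\sharp$-bound for $\lambda$: differentiating $\Lambda$ loses the gain contained in $\beta'$, and one must check that the Hölder seminorm $[\lambda]_{0,\sigma,[t,t+1]}$ inherits the same $M^{-1}\mu_0\, t^{-1}$ smallness. This follows because $\lambda$ is an algebraic combination of $\alpha$ (whose Hölder norm is already controlled by Lemma \ref{lemaalpha}) with smooth, slowly-varying multipliers $\mu_0,\mu_0'/\mu_0$, and because the nonlocal tail $\mu_0^{-1}\int_t^\infty\beta\mu_0^{-1/2}\mu_0'\,ds$ contributes to $\Lambda$ a function whose derivative is also controlled by $\|\beta\|$ times $|\mu_0'/\mu_0|$. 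Combining these estimates yields $\|\lambda\|_\sharp \le C M^{-1}\|h\|_\flat$, which simultaneously proves that $T$ is injective (uniqueness in Lemma \ref{lemaalpha}), surjective onto $X_\flat$ (by the explicit construction above), and that its inverse satisfies \eqref{au1}; linearity of $T^{-1}$ is inherited from linearity of the inversion formula.
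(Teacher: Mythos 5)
Your proposal is correct and follows essentially the same route as the paper, which presents Proposition \ref{freddyprop} precisely as a direct consequence of Lemma \ref{lemaalpha} combined with the reduction \eqref{defphi00}--\eqref{mp1}, the substitution \eqref{defbeta}, and the recovery of $\Lambda$ and $\lambda$ through \eqref{Lambdasol} and the homeomorphism $\mathcal{A}$ defined by \eqref{defaaa}. You simply spell out in more detail the norm transfers that the paper leaves implicit.
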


\medskip
We devote the rest to the Section to the

\begin{proof}[Proof of Lemma \ref{lemaalpha}]
We start performing a change of variables, to transform Problem \eqref{mp1} into an equivalent one with simpler form: let
$$
s= t_0 + M^2 a, \quad t= t_0 + M^2 b, \quad \tilde \beta (a) = \beta (s), \quad \tilde h (b ) = h (t).
$$
After this change of variables, Problem \eqref{mp1} takes the form
\begin{equation}
\label{mp11}
\int_0^b {\tilde \beta' (a) \over \sqrt{b-a}} \, \left( 1- e^{-{1\over b-a}} \right) \, da = M\, \tilde h (b).
\end{equation}

\medskip
Let $K (\eta ) = {1 - e^{-{1\over \sqrt{\eta}}}\over \sqrt{\eta}}$ and take the Laplace transform of both sides in \eqref{mp11}, thus getting
$$
{\mathcal L} \left( {\tilde \beta}' \right) (\xi ) {\mathcal L} \left(K\right) (\xi ) =M  {\mathcal L} \left( {\tilde h} \right) (\xi).
$$
Since ${\mathcal L} \left( {\tilde \beta}' \right) = \xi {\mathcal L} \left(\tilde \beta \right) (\xi ) - \tilde \beta (0)$, we get
\begin{equation}
\label{lap00}
{\mathcal L} \left(\tilde \beta  \right) (\xi ) = {\tilde \beta (0) \over \xi} +M {{\mathcal L} \left( \tilde h \right) (\xi ) \over \xi {\mathcal L} \left(K\right) (\xi ) }
\end{equation}
Observe now that
$$
{\mathcal L} \left(K\right) (\xi ) = \int_0^\infty e^{-\xi \eta} \, \left( {1 - e^{-{1\over \eta}}\over \sqrt{\eta}} \right) \, d \eta =
{2 \over \sqrt{\xi}}  \int_0^\infty e^{-p^2} \,\left(  1-e^{-{\xi \over p^2}}  \right) \, dp.
$$
We readily get that
\begin{equation}
\label{lap0}
{\mathcal L} \left(K\right) (\xi ) = {1\over \sqrt{\xi}} \left( 2\int_0^\infty e^{-p^2} \, dp \right) \, \left( 1+ o(1) \right) , \quad {\mbox {as}} \quad \xi \to \infty.
\end{equation}
To describe the behavior of ${\mathcal L} \left( K \right) (\xi)$, for $\xi \to 0$, we first notice that
\begin{align*}
\int_0^{1\over \xi} e^{-\xi \eta} \, \left( {1 - e^{-{1\over \eta}}\over \sqrt{\eta}} \right) \, d \eta &= \int_0^\infty {1 - e^{-{1\over \eta}}\over \sqrt{\eta}}  \, d\eta \\
&-  \underbrace{ \int_{1\over \xi}^\infty {1 - e^{-{1\over \eta}}\over \sqrt{\eta}} \, d\eta -\xi \int_0^{1\over \xi} \eta \, {1 - e^{-{1\over \eta}}\over \sqrt{\eta}}  \, d\eta }_{O(\sqrt{\xi})} + O(\xi) \\
&= \int_0^\infty {1 - e^{-{1\over \eta}}\over \sqrt{\eta}}  \, d\eta + O(\sqrt{\xi}).
\end{align*}
On the other hand,
\begin{align*}
\int_{1\over \xi}^\infty e^{-\xi \eta} \, \left( {1 - e^{-{1\over \eta}}\over \sqrt{\eta}} \right) \, d \eta &= \int_{1\over \xi}^\infty e^{-\xi \eta} \, \left( {1 -{1\over \eta }-  e^{-{1\over \eta}}\over \sqrt{\eta}} \right) \, d \eta \\
&+ \int_{1\over \xi}^\infty \,  { e^{-\xi \eta}   \over \eta \sqrt{\eta}}  \, d \eta = O (\sqrt{\xi}).
\end{align*}
Thus we conclude that
\begin{equation}
\label{lap1} {\mathcal L} \left(K\right) (\xi ) = \int_0^\infty {1 - e^{-{1\over \eta}}\over \sqrt{\eta}}  \, d\eta + O (\sqrt{\xi}), \quad {\mbox {as}} \quad \xi \to 0.
\end{equation}
From \eqref{lap0} and \eqref{lap1}, we conclude that
$$
{1\over \xi {\mathcal L} \left(K\right) (\xi )} =
\left\{ \begin{matrix} {c_1 \over \xi} + {c_2 \over \sqrt{\xi}} + O(1)  & \quad {\mbox {if}} \quad \xi \to 0 \\
{c_3 \over \sqrt{\xi}} \, (1+ o(1))  & \quad {\mbox {if}} \quad \xi \to \infty.
\end{matrix} \right.
$$
Let now $G = G(t)$ be so that ${\mathcal L} \left(G \right) (\xi) = {1\over \xi {\mathcal L} \left( K\right) (\xi )}$.
Standard arguments on Laplace transformation imply that
$$
G(t)= \left\{ \begin{matrix} \tilde c_1  + {\tilde c_2 \over \sqrt{t}} + O({1\over t})  & \quad {\mbox {if}} \quad t \to \infty \\
{\tilde c_3 \over \sqrt{t}} \, (1+ o(1))  & \quad {\mbox {if}} \quad t \to 0.
\end{matrix} \right. ,
$$
for certain constants $\tilde c_1$, $\tilde c_2$ and $\tilde c_3$.
From \eqref{lap00}, taking the anti-Laplace transform of both sides, we get
\begin{align*}
\tilde \beta (b) &= \tilde \beta (0) + M \int_0^b \tilde h (a) G(b-a) \, da \\
&=\tilde \beta (0) +M  \tilde c_1 \int_0^\infty \tilde h(a) \, da
+M \tilde c_1 \int_b^\infty \tilde h(a) \, da + M \int_0^b \tilde h (a) \left[ G(b-a) - \tilde c_1 \right] \, da .
\end{align*}
We select the solution to Problem \eqref{mp11} so that
$$
\tilde \beta (0) + M \tilde c_1 \int_0^\infty \tilde h(a) \, da =0 .
$$
In the original variables, we thus obtain an explicit solution to \eqref{mp1}
\begin{equation}
\label{mp1sol}
\beta(t) = \underbrace{{\tilde c_1\over M} \int_t^\infty h(s) \, ds}_{:=\beta_1 (t)} + \underbrace{{1\over M} \int_{t_0}^t h(s) \, \left[ G \left( {t-s \over M^2} \right) -\tilde c_1 \right] \, ds}_{:=\beta_2 (t)}.
\end{equation}
Let us now check \eqref{mp2}. Since \eqref{mp0} holds, we easily get that
$$
\sup_{t>t_0} \mu_0^{-{3\over 2}}  \left| \beta_1  (t) \right| \lesssim M^{-1} .
$$
To control the second term in \eqref{mp1sol}, we change variable $t=M^2 \bar t$, $s = M^2 \bar s$, so that
$$
\beta_2 (t) = M \int_{t_0 \over M^2}^{\bar t} h(M^2 \bar s) \, \left[ G \left( \bar t- \bar s \right) -\tilde c_1 \right] \, d\bar s.
$$
Since $t_0 = M^2$ and since \eqref{mp0} holds, we get
\begin{align*}
\left| \beta_2 (t) \right| &\lesssim {1\over M} \int_1^{\bar t} {\mu_0^{3\over 2} (\bar s) \over {\bar s}} \, \left[ G \left( \bar t- \bar s \right) -\tilde c_1 \right] \, d\bar s \lesssim {1\over M} \mu_0^{3\over 2} (\bar t) \lesssim M^{-1} \mu_0^{3\over 2}  (t) ,
\end{align*}
from which we get the validity of \eqref{mp2}.

\medskip
The assumption that $\mu_0^{-{3\over 2}}t [h]_{0, \sigma , [t,t+1]}$  is bounded guarantees that the function $\beta$ defined in \eqref{mp1sol} is differentiable.  Indeed, trivially one has $\beta_1' (t) = -{\tilde c_1 \over M} h (t)$. Let us write $\beta_2$ in the following way
$$
\beta_2 (t) = {1\over M} \int_{t_0}^t ( h(s) - h(t) )  \left[G \left( {t-s \over M^2} \right) -\tilde c_1 \right] \, ds +
{h(t) \over M} \int_{t_0}^t  \, \left[ G \left( {t-s \over M^2} \right) -\tilde c_1 \right] \, ds.
$$
Thus we have
\begin{eqnarray*}
\beta_2' (t) &=& \underbrace{{1\over M} \lim_{s \to t} \left[ ( h(s) - h(t) )  [G \left( {t-s \over M^2} \right) -\tilde c_1 ] \right]}_{=0}
+ {1\over M^3} \int_{t_0}^t ( h(s) - h(t) )  G' \left( {t-s \over M^2} \right)  \, ds \\
&+&\underbrace{{h'(t) \over M} \int_{t_0}^t  \, \left[ G \left( {t-s \over M^2} \right) -\tilde c_1 \right] \, ds -{h'(t) \over M} \int_{t_0}^t  \, \left[ G \left( {t-s \over M^2} \right) -\tilde c_1 \right] \, ds}_{=0}\\
&+& {h(t) \over M} {d \over dt} \left( \int_{t_0}^t  \, \left[ G \left( {t-s \over M^2} \right) -\tilde c_1 \right] \, ds \right) \\
&=& {1\over M^3} \int_{t_0}^t ( h(s) - h(t) )  G' \left( {t-s \over M^2} \right)  \, ds + {h(t) \over M} {d \over dt} \left( \int_{t_0}^t  \, \left[ G \left( {t-s \over M^2} \right) -\tilde c_1 \right] \, ds \right).
\end{eqnarray*}
Both the last two integrals are well defined, as consequence of the behavior of $G (\eta) $, as $\eta \to 0$, and the assumption that
$\mu_0^{-{3\over 2}}t [h]_{0, \sigma , [t,t+1]}$  is bounded.
Since $G(\eta ) \sim \eta^{-{1\over 2}}$, as $\eta \to 0$,
direct computations give the bounds in \eqref{mp2} for $\beta' (t)$.
This concludes the proof of the Lemma.
\end{proof}

\setcounter{equation}{0}
\section{Choice of $\lambda$: Part II}\label{secpar1}

This Section is devoted to solve in $\la$ Equation \eqref{na1}, for fixed $\phi$ satisfying \eqref{phibound}.
We have the validity of the following

\begin{proposition}\label{proplambdafinal}
For any $\phi $ satisfying \eqref{phibound}, there exists $L>0$ and a unique solution $\la = \la [\phi]$ to Equation \eqref{na1}, with
\begin{equation}\label{sunday-1}
\| \la \|_\sharp \leq L M^{-1}
\end{equation}
where $M = \sqrt{t_0}$, provided the initial time $t_0$ in Problem \eqref{p} is chosen large enough. Furthermore, there exists a constant ${\bf c} \in (0,1)$ such that, for any $\phi_1$, $\phi_2$ satisfying \eqref{phibound}, we have
\begin{equation}\label{sunday-2}
\| \la [\phi_1 ] - \la [\phi_2] \|_\sharp \leq {\bf c} \| \phi_1 - \phi_2 \|_{\nu , a}.
\end{equation}
\end{proposition}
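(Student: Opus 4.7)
The plan is to recast Equation \eqref{na1}, via the equivalent form \eqref{na2} provided by Lemma \ref{l1}, as a fixed point problem in the Banach space $X_\sharp$. Using the decomposition $\phi_0(0,t) = T[\lambda](t) + \hat T[\lambda](t)$ from \eqref{duha12}, I would rewrite \eqref{na2} as
$$T[\lambda] \ =\ g + G[\lambda,\phi] - \hat T[\lambda] - \big[\mu_0\mu_0' b + q_1(\lambda)\big]\big(T[\lambda] + \hat T[\lambda]\big).$$
By Proposition \ref{freddyprop} the operator $T$ is a homeomorphism from $X_\sharp$ to $X_\flat$ with $\|T^{-1}(h)\|_\sharp \le CM^{-1}\|h\|_\flat$, so applying $T^{-1}$ gives the equivalent fixed point equation
$$\lambda \ =\ \mathcal{F}[\lambda,\phi] \ :=\ T^{-1}\Big(g + G[\lambda,\phi] - \hat T[\lambda] - [\mu_0\mu_0' b + q_1(\lambda)]\,\phi_0(0,t)\Big),$$
and the strategy is to apply the contraction mapping principle for $\mathcal{F}[\cdot,\phi]$ on the closed ball $B_L := \{\lambda \in X_\sharp : \|\lambda\|_\sharp \le LM^{-1}\}$, for a sufficiently large constant $L$ independent of $t_0$.

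To verify the self-mapping property, I would combine the bound $\|T^{-1}\| \lesssim M^{-1}$ with the estimates from Lemma \ref{l1}, namely $\|g\|_\flat \le C$ and $\|G[\lambda,\phi]\|_\flat \le C$ for every $\lambda \in B_L$ and every $\phi$ satisfying \eqref{phibound}. The perturbative term $\hat T[\lambda]$, which arises from the difference between the exact Duhamel expression \eqref{duha11} and its principal part \eqref{defT} (essentially replacing $\mu = \mu_0(1+\Lambda)^2$ by $\mu_0$ in the weight $(\mu+|y|)^{-1}$), should satisfy $\|\hat T[\lambda]\|_\flat \le \mathbf{c}(\|\lambda\|_\sharp + 1)$ with $\mathbf{c}$ small for large $t_0$, because on $B_L$ the function $\Lambda$ is $O(LM^{-1}\mu_0)$, so the difference between the two kernels carries an explicit smallness factor. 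The remaining term $[\mu_0\mu_0' b + q_1(\lambda)]\phi_0(0,t)$ contains the small prefactors $\mu_0\mu_0' \to 0$ as $t \to \infty$ and $q_1(\lambda) = O(\|\lambda\|_\sharp)$ since $q_1(0)=0$; combined with the bound on $\phi_0(0,t)$ coming from the right-hand side of \eqref{na2}, this yields an $X_\flat$ estimate of the form $\mathbf{c}(1+L M^{-1})$. Together these give $\|\mathcal{F}[\lambda,\phi]\|_\sharp \le CM^{-1}(1 + \mathbf{c} L)$, which lies in $B_L$ for $L$ fixed large and $t_0$ large.

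For the contraction property I would subtract $\mathcal{F}[\lambda_1,\phi]-\mathcal{F}[\lambda_2,\phi]$, applying the Lipschitz estimate \eqref{queen2}, together with analogous Lipschitz bounds for $\hat T$ and for the nonlinear factor $[\mu_0\mu_0' b + q_1(\lambda)]\phi_0(0,t)$; each such Lipschitz constant is $O(M^{-1})$ or can be made small by taking $t_0$ large, so after multiplication by $\|T^{-1}\| \lesssim M^{-1}$ we obtain a strict contraction on $B_L$. Existence and uniqueness of $\lambda=\lambda[\phi]$ and the bound \eqref{sunday-1} then follow. The Lipschitz dependence \eqref{sunday-2} on $\phi$ is obtained by writing $\lambda[\phi_1]-\lambda[\phi_2] = \mathcal{F}[\lambda[\phi_1],\phi_1]-\mathcal{F}[\lambda[\phi_2],\phi_2]$, splitting this as a $\lambda$-difference plus a $\phi$-difference, absorbing the $\lambda$-difference into the left-hand side via the contraction factor, and controlling the $\phi$-difference through \eqref{queen3}, $\|G[\lambda,\phi_1]-G[\lambda,\phi_2]\|_\flat \le \mathbf{c}\|\phi_1-\phi_2\|_{\nu,a}$.

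The main obstacle will be the careful control of the perturbative operator $\hat T[\lambda]$ in the $\|\cdot\|_\flat$ norm, together with its Lipschitz constant in $\lambda$. Because $\hat T$ depends on $\lambda$ both through $\Lambda$ appearing in $\mu$ and through $\alpha(t) = 3^{1/4}\mu_0^{-1/2}(\mu_0\Lambda)'$ inside the integrand of \eqref{duha11}, one must quantify precisely how much smaller $\hat T[\lambda]$ is than the principal $T[\lambda]$, exploiting that $\Lambda$ is small on $B_L$ and using the smoothness of the $\tfrac12$-Caputo-type kernel $K(\eta)=(1-e^{-1/\sqrt{\eta}})/\sqrt{\eta}$ analyzed in Section \ref{nonlocal}; it is this smallness, together with the factor $q_1(0)=0$, that ultimately closes the fixed-point scheme for $t_0$ large.
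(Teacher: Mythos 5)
Your overall architecture is exactly the paper's: recast \eqref{na1} as \eqref{na2} via Lemma \ref{l1}, invert the principal part $T$ using Proposition \ref{freddyprop}, run a Banach fixed point on the ball $\{\|\la\|_\sharp \le LM^{-1}\}$, and deduce \eqref{sunday-2} from the contraction factor together with \eqref{queen3}. (The paper absorbs the factor $(1+\mu_0\mu_0'b+q_1(\la))^{-1}$ into new data $g_1,G_1$ satisfying the same bounds rather than carrying the product term on the right-hand side, but that is immaterial.)

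The genuine issue is your characterization of $\hat T[\la]=\phi_0(0,\cdot)-T[\la]$, and hence of where its smallness comes from. You describe it as the effect of ``replacing $\mu=\mu_0(1+\Lambda)^2$ by $\mu_0$ in the weight $(\mu+|y|)^{-1}$'' and propose to exploit the smallness of $\Lambda$ on the ball. But \eqref{defT} carries the weight $|y|^{-1}$, not $(\mu_0+|y|)^{-1}$; the perturbation is
$$
\hat T[\la](t)=-\int_{t_0-1}^t\int_{\R^3}\frac{\bar\alpha(s)}{(4\pi(t-s))^{3/2}}\,\frac{e^{-|y|^2/4(t-s)}}{|y|}\,\frac{\mu(s)}{\mu(s)+|y|}\,{\bf 1}_{\{|y|<M\}}\,dy\,ds,
$$
whose smallness has nothing to do with $\Lambda$ being small: it comes from the factor $\mu(s)\sim\mu_0(s)\to 0$, and converting that into the needed bound $\|\hat T[\la]\|_\flat\le \bar c_1 M\|\la\|_\sharp$ with $\bar c_1 C<1$ (exactly enough to beat the $CM^{-1}$ loss from $T^{-1}$) requires the convolution estimate $\int_{t_0-1}^t \bar\alpha(s)\mu(s)(t-s)^{-1/2}\,ds=\bar\alpha(t)\mu(t)\sqrt{t-t_0+1}\,\Pi(t)$ together with $\bigl|\int_0^{M/\sqrt{t-s}}e^{-\rho^2}\rho(\mu+\rho)^{-1}d\rho\bigr|\lesssim M/\sqrt{t}$. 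If you estimated only the $\mu$-versus-$\mu_0$ discrepancy you would leave the dominant part of $\hat T$ untouched. The same misidentification affects your Lipschitz bound in $\la$, which in the paper again runs through the difference $\mu[\la_1]-\mu[\la_2]$ inside the factor $\mu/(\mu+\rho)$. Once $\hat T$ is correctly identified and these two estimates are supplied, the rest of your scheme closes as you describe.
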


\medskip
\begin{proof}[Proof of Proposition \ref{proplambdafinal}.]
Lemma \ref{l1} states that solving Equation \eqref{na1} is equivalent to solve \eqref{na2}. We write \eqref{na2} as follows
\begin{equation}\label{na3}
T[\la] (t) + \hat T [\la ] (t) = \left( 1+ \mu_0 \mu_0' b(t) + q_1 (\la ) \right)^{-1} \, \left[ g(t) + G[\la , \phi ] (t) \right],
\end{equation}
where $T$ and $\hat T$ are defined in \eqref{duha12}  and \eqref{defT}, while $b$, $g$ and $G$ satisfy the bounds in \eqref{queen1},\eqref{queen2} and \eqref{queen3}. Here $q_1 =q_1 (s ) $ denotes a smooth function such that $q_1 (0) = 0$ and $q_1' (0) \not= 0$.
We observe first that
$$
\left( 1+ \mu_0 \mu_0' b(t) + q_1 (\la ) \right)^{-1} \, \left[ g(t) + G[\la , \phi ] (t) \right] = g_1 (t) + G_1 [\la , \phi ] (t),
$$
for some new functions $g_1$ and $G_1$ that also satisfy \eqref{queen1}, \eqref{queen2}, and \eqref{queen3}.

\medskip
Thanks to the result of Proposition \ref{freddyprop}, solving in $\la$ Equation \eqref{na3} reduces to find the fixed point problem
\begin{equation}\label{na4}
\la (t) = {\mathcal F} (\la) (t) , \quad {\mathcal F} (\la )  := T^{-1} \left( g_1  + G_1 [\la , \phi]  - \hat T [\la ]  \right)
\end{equation}
where $T^{-1}$ is the operator introduced in Proposition \ref{freddyprop}.

\medskip
{\it Step 1}. \ \ First we show that, for any fixed $\phi $ satisfying \eqref{phibound}, there exists a unique fixed point $\la = \la [\phi]$ of contraction type for ${\mathcal F}$ in the set
$$
B=\{ \la \in X_\sharp \, : \, \| \la \|_\sharp \leq L M^{-1} \}
$$
for some $L>0$ large.

\medskip
 In order to prove this fact, we claim that, if the initial time $t_0$ in Problem \eqref{p} is large enough, there are positive constants $\bar c_1$, $\bar c_2 \in (0,1)$ so that, for any $\la \in B$,
\begin{equation}
\label{magic1}
\| \hat T [\la ] \|_\flat \leq \bar c_1 M \| \la \|_\sharp, \quad {\mbox {with}} \quad \bar c_1 C < 1
\end{equation}
and
\begin{equation}
\label{magic2}
\| \hat T[\la_1] - \hat T [\la_2 ] \|_\flat \leq \bar c_2 \| \la_1 - \la_2 \|_\sharp \quad {\mbox {with}} \quad C M^{-1} ({\bf c} +\bar c_2 ) < 1,
\end{equation}
for any $\la_1 $, $\la_2 \in B$. The constant $C$ is the constant appearing in \eqref{au1}, Proposition \ref{freddyprop}, while ${\bf c}$ is the constant is the one appearing in \eqref{queen2}.

\medskip
Assume for the moment the validity  of \eqref{magic1} and \eqref{magic2}. For any $\la \in B$, we have
\begin{align*}
\| {\mathcal F} (\la ) \|_\sharp &\leq C M^{-1} \| g_1 + G_1 [\la , \phi] - \hat T [\la ] \|_\flat \leq C M^{-1} \left( \| g_1 \|_\flat + \| G_1 [\la , \phi] \|_\flat + \| \hat T [\la ] \|_\flat \right) \\
& \leq C M^{-1} \left( 2c + \bar c_1  L \right) \leq L M^{-1}
\end{align*}
provided $L > {2cC \over 1-\bar c_1 C}$, where $C$ is the constant in \eqref{au1}, $c$ are the constants in \eqref{queen1}, and $\bar c_1$ is the constant in \eqref{magic1}, which satisfies $\bar c_1 C <1$.

\medskip
Let us take now $\la_1 $, $\la_2 \in B$. We have
\begin{align*}
\| {\mathcal F} (\la_1 ) - {\mathcal F} (\la_2 ) \|_\sharp &=
\| T^{-1} ( G_1 [\la_1 , \phi ] - G_1 [\la_2 , \phi] ) - T^{-1} (\hat T[\la_1 ] -\hat T [\la_2] ) \|_\sharp \\
&\leq C M^{-1}\left( \| G_1 [\la_1 , \phi] - G_1 [\la_2 , \phi] \|_\flat + \| \hat T [\la_1 ] - \hat T [\la_2 ] \|_\flat \right]\\
&\leq C M^{-1} (c_1 + \bar c_2 ) \| \la_1 - \la_2 \|_\sharp < \ve \| \la_1 - \la_2 \|_\sharp,
\end{align*}
for some $\ve <1$, thanks to the choice of $\bar c_2$ in \eqref{magic2}.

A direct application of Banach fixed point gives the existence and uniqueness of a solution $\la $ to Equation \eqref{na1}, satisfying \eqref{sunday-1}. We complete the first part of the proof of the Proposition with the proofs of \eqref{magic1} and \eqref{magic2}.

\medskip
\noindent
{\it Proof of \eqref{magic1}.}
Let $\la \in B$. From \eqref{duha12} and \eqref{defT}, we get
\begin{align*}
\hat T [\la ] (t) & =
 -
\int_{t_0-1 }^t \int_{\R^3} {\bar \alpha (s)  \over  (4\pi (t-s) )^{3\over 2}} \, \,   {e^{-{|y|^2 \over 4 (t-s) }}  \over |y| }  { \mu (s) \over \mu (s) + |y|}  {\bf 1}_{\{ |y|< M \}} \, dz \, ds \\
&= \bar c  \int_{t_0 -1}^t {\bar \alpha (s) \mu(s)  \over \sqrt{t-s}} \int_0^{M\over \sqrt{t-s}} e^{-\rho^2} {\rho \over \mu + \rho} \, d\rho  ds,
\end{align*}
for some explicit constant $\bar c$.
Since $\left| \int_0^{M\over \sqrt{t-s}} e^{-\rho^2} {\rho \over \mu + \rho} \, d\rho \right| \leq c {M\over \sqrt{t}}$, for any $t$ large, we
observe that
\begin{equation}\label{sunday0}
\left| \hat T [\la ](t) \right| \leq A  {M \over \sqrt{t}} \, \left| \int_{t_0 -1}^t {\bar \alpha (s) \mu (s)  \over \sqrt{t-s}} \, ds \right|,
\end{equation}
for some fixed constant $A$.
We claim that
\begin{equation}
\label{sunday}
 \int_{t_0 -1}^t {\bar \alpha (s) \mu (s) \over \sqrt{t-s}} \, ds =\bar  \alpha (t) \mu (t)  \sqrt{t - t_0 +1} \, \Pi (t), \quad t > t_0,
 \end{equation}
 for some smooth and uniformly bounded function $\Pi (t)$.
Indeed, we write, for $\beta_* (s) = \bar \alpha (s) \mu (s) $,
\begin{equation}\label{sunday1}
 \int_{t_0 -1}^t {\beta_* (s) \over \sqrt{t-s}} \, ds =  \int_{t_0 -1}^t {\beta_* (s) -\beta_* (t)  \over \sqrt{t-s}} \, ds
 + 2 \beta_* (t) \, \sqrt{t-t_0+1}   =i+  2 \beta_* (t) \, \sqrt{t-t_0+1} .
 \end{equation}
Use the change of variables $x= \sqrt{t-s}$
$$
i= -2 \int_{0}^{\sqrt{t-t_0+1} } \left[ \beta_* (t) - \beta_* (t-x^2) \right] \, dx
=-2 \beta_* (t) \int_{0}^{\sqrt{t-t_0+1} } {\left[ \beta_* (t) - \beta_* (t-x^2) \right] \over \beta_* (t) } \, dx.
$$
We now observe that the function $x \to {\left[ \beta_* (t) - \beta_* (t-x^2) \right] \over \beta_* (t) }$ is uniformly bounded in
$x\in [0, \sqrt{t-t_0+1} ]$, since
$$
{\left[ \beta_* (t) - \beta_* (t-x^2) \right] \over \beta_* (t) } =
\left\{ \begin{matrix}
1- (1-{x^2 \over t} )^{-1} (1- {x^2 \over t} )^{-{3\over 2} \bar \gamma -1}  & \quad {\mbox {for}} \quad \gamma \not=2 \\
1- (1-{x^2 \over t} )^{-{5\over 2}} [1+ \log (1- {x^2 \over t} )]^3 & \quad {\mbox {for}} \quad \gamma=2
\end{matrix}\right.
$$
where $\bar \gamma = 1$ if $\gamma >2$, and $\bar \gamma = \gamma -1$ if $1< \gamma <2$. With this in mind, we conclude that
\begin{equation}\label{sunday2}
i= \beta_* (t) \sqrt{t-t_0+1} \, \, \Pi (t)
\end{equation}
for some smooth and bounded function $\Pi$. Inserting \eqref{sunday2} into \eqref{sunday1}, we get \eqref{sunday}.

\medskip
Using \eqref{sunday} in \eqref{sunday0}, we conclude that
$$
\left| \hat T [\la ](t) \right| \leq A \mu_0 (t) M \| \la \|_\sharp \left[ \mu_0^{3\over 2} (t) t^{-1} \right],
$$
for some fixed constant $A$, independent of $t$ and of $M$. Thus, for $t$ large, if we choose $t_0$ sufficiently large, there exists a constant $c_1 \in (0,1)$ such that
$$
\left| \hat T [\la ](t) \right| \leq c_1 \, M \, \| \la \|_\sharp \left[ \mu_0^{3\over 2} (t) t^{-1} \right].
$$
Let now consider $t_1$ and $t_2 \in [t, t+1]$. We write
\begin{align*}
\hat T[\la] (t_1) & - \hat T[\la ] (t_2) =
\bar c   \int_{t_0 -1}^{t_1}  \left[ {\bar \alpha (s) \over \sqrt{t_1 -s}} - {\bar \alpha (s) \over \sqrt{t_2 -s}} \right]\int_0^{M\over \sqrt{t_1-s}} e^{-\rho^2} {\rho \mu \over \mu  + \rho} \, d\rho ds \\
&- \bar c  \int_{t_0 -1}^{t_1}  {\bar \alpha (s) \over \sqrt{t_2 -s}} \int_{M\over \sqrt{t_2-s}}^{M\over \sqrt{t_1-s}} e^{-\rho^2} {\rho \mu \over \mu + \rho} \, d\rho ds \\
&- \bar c \int_{t_1}^{t_2}  {\bar \alpha (s) \over \sqrt{t_2 -s}}  \int_0^{M\over \sqrt{t_2-s}} e^{-\rho^2} {\rho \mu \over \mu  + \rho} \, d\rho ds
= \sum_{j=1}^3 i_j
\end{align*}
Observe that, for $t_1$, $t_2 \in [t, t+1]$, for $t$ large, we have
\begin{eqnarray}\label{sunday--}
\sup_{t_1 , t_2 \in [t, t+1] } {|\mu(t_1 ) - \mu(t_2)| \over |t_1 - t_2 |^\sigma} & \leq C \mu_0 (t)  \sup_{t_1 , t_2 \in [t, t+1] } { |\Lambda (t_1) - \Lambda (t_2) | \over |t_1 - t_2 |^\sigma}  \nonumber \\
&\leq
 C M^{-1} \mu_0 (t) \left( \mu_0^{3\over 2} (t) t^{-1} \right)
 \end{eqnarray}
for some constant $C$. With this, we can estimate $i_1$ and $i_5$, as follows
$$
[i_j  ]_{0,\sigma , [t,t+1]} \leq C M^{-1} \mu_0 (t) \left( \mu_0^{3\over 2} (t) t^{-1} \right) , \quad {\mbox {for}} \quad j=1,5.
$$
Straightforward computation gives
$$
[i_j]_{0,\sigma , [t, t+1]} \leq C M^{-1} \mu_0 (t) t^{-\sigma} \, \| \la \|_\sharp \left( \mu_0^{3\over 2} (t) t^{-1} \right), \quad {\mbox {for}} \quad j=1,2,3.
$$
These estimates, together with the ones we obtained before, constitute the proof of \eqref{magic1}.

\medskip
\medskip
\noindent
{\it Proof of \eqref{magic2}.}
Let $\la_1$, $\la_2 \in B$. From \eqref{duha12} and \eqref{defT},
\begin{align*}
\hat T[\la_1] (t)  & - \hat T[\la_2 ] (t) =
\bar c   \int_{t_0 -1}^{t} {\bar \alpha (s) \over \sqrt{t -s}} \int_0^{M\over \sqrt{t-s}} e^{-\rho^2} \left[ {\rho \mu [\la_1]  \over \mu [\la_1 ] + \rho} - {\rho \mu [\la_2]  \over \mu [\la_2 ] + \rho} \right] \, d\rho ds .
\end{align*}
Observe that
\begin{align*}
\left| \left( \mu [\la_1 ] - \mu [\la_2] \right) (s) \right| &\leq A \mu_0 (s) \left| \Lambda_1 (s) - \Lambda_2 (s) \right| \\
&\leq A \mu_0 (s) \int_s^\infty |\la_1 -\la_2| (x) \, dx \leq A \mu_0^2 (s) \| \la_1 -\la_2 \|_\sharp
\end{align*}
for some constant $A$, whose value may change from one line to the other, and which is independent of $t$ and $t_0$.
A Taylor expansion gives
\begin{align*}
| \hat T[\la_1] (t)   - \hat T[\la_2 ] (t) | \leq
   \int_{t_0 -1}^{t} {|\bar \alpha (s)| \over \sqrt{t -s}} \int_0^{M\over \sqrt{t-s}} e^{-\rho^2}  {\rho    \over (\tilde \mu  + \rho )^2}  \left| \mu [\la_1] (s) - \mu [\la_2 ] (s) \right| \, d\rho ds
\end{align*}
for some $\tilde \mu$ between $\mu [\la_1]$ and $\mu [\la_2] $. Thus we get
$$
| \hat T[\la_1] (t)   - \hat T[\la_2 ] (t) | \leq A \mu_0^2 (t) M \, [\mu_0^{3\over 2} (t) t^{-1} ] \, \| \la_1 - \la_2 \|_\sharp,
$$
where $A$ is a constant independent of $t_0$ and $t$. Using again \eqref{sunday--}, we can show that
$$
\left| \hat T[\la_1]   - \hat T[\la_2 ] \right|_{0,\sigma ,[t,t+1]} \leq A \mu_0^2 (t) M \, [\mu_0^{3\over 2} (t) t^{-1} ] \, \| \la_1 - \la_2 \|_\sharp,
$$
where $A$ is a constant independent of $t_0$ and $t$. Choosing $t_0$ large enough, we can find $\bar c_2$ small enough so that \eqref{magic2} holds true.

\medskip
\medskip
{\it Step 2}. \ \ In the second part of the proof, we show the validity of \eqref{sunday-2}. For this purpose, we fix $\phi_1 $ and $\phi_2$ satisfying \eqref{phibound}, and we let $\la_j = \la [\phi_j]$ , $j=1,2$. If $\bar \la = \la_1 - \la_2$, then we see that $\bar \la$ solves
\begin{align*}
\bar \la &= T^{-1} \left( G_1 [\la_1 , \phi_1 ] - G_1 [\la_2 , \phi_2] \right) \\
&= T^{-1} \left( G_1 [\bar \la_1 , \phi_1 ] - G[\bar \la_1, \phi_2] \right) + T^{-1} \left( G_1 [\la_1 , \phi_2] - G[\la_2 , \phi_2] \right).
\end{align*}
Thus
\begin{align*}
\| \bar \la \|_\sharp &\leq C M^{-1} \left( \| G_1 [\bar \la_1 , \phi_1 ] - G[\bar \la_1, \phi_2] \|_\flat +
\| G_1 [\la_1 , \phi_2] - G[\la_2 , \phi_2] \|_\flat \right) \\
&\leq C M^{-1} \left( {\bf c} \| \phi_1 - \phi_2 \|_{\nu , a} + {\bf c} \| \la_1 - \la_2 \|_\sharp \right),
\end{align*}
where $C$ is the constant in \eqref{au1}, $M^2= t_0$, ${\bf c}$ are the constants defined respectively in \eqref{queen2} and \eqref{queen3}.
We now observe that the proof of Lemma \ref{l1} also gives that the constants ${\bf c}$ in \eqref{queen2} and \eqref{queen3} can be such that $CM^{-1} {\bf c}<1$. Thus the proof of \eqref{sunday-2} readily follows.

\medskip
This concludes the proof of the Proposition.
\end{proof}

\medskip

\begin{remark}\label{rmk2}
Recall that the function $ \psi = \bar \Psi [\psi_0]$ solution to Problem \eqref{equpsi}
depends smoothly on the initial condition $\psi_0$, provided $\psi_0$ belongs to a small
neighborhood of $0$ in the Banach space $L^\infty (\Omega)$ equipped with the norm defined in \eqref{gg2}, as observed in Remark \ref{rmk1}. This fact implies that also $\la = \la[\psi_0] $ solution to \eqref{na1} depends  on $\psi_0$. A closer look at the definitions of $\la = \la[\psi_0] $ gives that
$$
\| \la [\psi_0^{(1)} ] - \la [\psi_0^{(2)} ] \|_\sharp \lesssim
\| e^{b|y|} [ \psi_0^{(1)}  - \psi_0^{(1)} ] \|_{L^\infty (\R^3) } + \| e^{b|y|} [ \nabla \psi_0^{(1)}  - \nabla \psi_0^{(1)} ] \|_{L^\infty (\R^3) }.
$$
This fact will be useful in the final argument of finding $\phi$ solution to \eqref{equ3332}.

\end{remark}

\setcounter{equation}{0}
\section{Final argument: solving \eqref{equ3}} \label{final}

We are constructing a global unbounded solution to Problem \eqref{p}-\eqref{IC} of the form \eqref{solll}
$$
u= U_2 [\la ] (r,t) + \tilde \phi.
$$
The function $U_2$ is defined in \eqref{defU2}, while $\tilde \phi$ is given in \eqref{deftildephi}. The function $\psi$ which enters in the definition of $\tilde \phi$ solves the {\it outer problem} \eqref{equpsi}, and its properties are contained in Proposition \ref{propext} and \ref{propext1}.
The parameter $\la = \la(t)$ belongs to the space $X_\sharp$, \eqref{Xsharp}, and has been chosen to solve Equation \eqref{na1}. The properties of this $\la = \la (t)$ are collected in Proposition \ref{proplambdafinal}. What is left is to solve in $\phi$ the {\it inner problem} \eqref{equ3}. Thanks to the choice of $\la = \la (t)$, the orthogonality condition \eqref{ortio} is satisfied, so that we can use the result of Proposition \ref{prop0} to solve in $\phi$ Problem \eqref{equ3}.

In other words, we want to find $\phi$, with its $\| \phi \|_{\nu, a}$-bounded, solution to Problem \eqref{equ3}. The function $\psi = \Psi [\la [\phi] , \phi]$ solves \eqref{equpsi}, while $\la = \la[\phi]$ solves Equation \eqref{na1}.

\medskip
At this point, we fix $a$ in the definition od the $\| \star \|_{\nu , a}$ to be equal to $1$.
Proposition \ref{prop0} defines a linear operator  $\phi = {\mathcal T} (h)$, where $\phi $ is the solution to \eqref{p110nuovo} so that
$$
\| \phi \|_{\nu , 1 } \leq C_0  \, R^4 \, \| h \|_{\nu , 3}
$$
for some fixed constant $C_0$. We refer to \eqref{minchia} for  $\| h \|_{\nu , 2+a}$ and to \eqref{normphi} for  $\| \phi \|_{\nu , a }$, for $a=1$. Thus we can say that $\phi$ solves \eqref{equ3331}-\eqref{equ3332} if and only if $\phi$ is a fixed point for the Problem
\begin{equation}\label{vio}
\phi = {\mathcal T} \left( {\bf H} [\phi ] \right), \quad {\mbox {where}} \quad {\bf H} [\phi] = H ( \psi [\phi] , \la [\phi] , \phi ),
\end{equation}
and $H$ is defined in \eqref{defHH}.
Choose the number $R$ in the cut off function $\eta_R$, defined in \eqref{cut1} and appearing in the ansatz \eqref{deftildephi}, to be sufficiently large in terms of $t_0$, say  $R^{6} \mu_0^{1\over 2} (t_0  ) = 1$.  We claim that there exists a unique $\phi$ solution to \eqref{vio} in the set
$$
B_1 = \{ \phi \, : \, \| \phi \|_{\nu, 1} \leq L_1 \}
$$
for some $L_1 >0$, fixed.

\medskip
From \eqref{cut1} and \eqref{marte2}, we see that
$$
\left|  \mu_{0}^{\frac{5}2}  {\mathcal E}_{22} (\mu_{0} y,t) \right| \lesssim 
 \mu_0^{1\over 2} {\mu_0^{3\over 2} t^{-1} \over (1+ |y|^2)^2}, \quad 
  \left|  5 \,  { \mu_{0}^{\frac{1}2} \over (1+ \Lambda)^4 } \, w^4 ( {y \over (1+ \Lambda)^2 }  ) \psi(\mu_{0} y,t)
\right| \lesssim {\mu_0^2 (t) t^{-1} \over (1+ |y|^3)}.
$$
Furthermore,
$$
\left| B[\phi] (t) \right| \leq C R^2 \mu_0 \mu_0' {\mu_0^{3\over 2} t^{-1} \over (1+ |y|^{2+a} )}, \quad
\left| B^0 [\phi] (t) \right| \leq C \Lambda (t) { \mu_0^{3\over 2} t^{-1} \over (1+ |y|^{4+a} )}.
$$
In fact, one can prove that
$$
\| {\bf H} [\phi ] \|_{\nu, 2+a} \leq C_1 \, R^{-4}
$$
for some fixed number $C_1$, independent from $t$ and of $t_0$. This implies that, if $\phi \in B_1$, then ${\mathcal T} (\phi ) \in B_1 $
provided $L_1$ is chosen large. Furthermore, combining \eqref{marte0001new}, the result of Proposition \ref{propext1}, and the result of Proposition \ref{proplambdafinal}, we get the existence of a number ${\bf c} \in (0,1 )$, so that
$$
\| {\mathcal T} [\phi_1 ] - {\mathcal T} [\phi_2] \|_{\nu, a} \leq {\bf c} \| \phi_1 - \phi_2 \|_{\nu , a}
$$
for any $\phi_1$ and $\phi_2 \in B_1$. We apply Banach fixed point theorem to get the existence of a unique solution to \eqref{vio} with $\| \cdot \|_{\nu , a}$-bounded.

This concludes the proof of the existence of the solution to Problem \eqref{p}-\eqref{IC}, or equivalently Problem \eqref{p001}-\eqref{gam}, as predicted by Theorem \ref{teo1}.  \qed

\setcounter{equation}{0}
\section{Basic linear theory for the inner problem}\label{inner0}

Let  $R >0$ be a fixed large number. This section is devoted to construct a solution to the initial value problem
\be \label{cf1}
\phi_\tau  =
\Delta \phi +5w^4  \phi + h(y,\tau )  \inn B_{2R} \times (\tau_0, \infty ) , \quad \phi(y,\tau_0) = e_0 Z(y)  \inn B_{2R},
\ee
for any given function $h$ with $\|h\|_{\nu, 2+a} <+\infty$, not necessarily radial in the $y$ variable. We refer to \eqref{minchia} for the explicit definition of the
$\| \cdot \|_{\nu , 2+a}$-norm. The corresponding problem in dimension $n\geq 5$ has already been treated in \cite{CDM}, Section 7. We follow the same strategy in the procedure to construct the solution to \eqref{cf1}, but in dimension $3$ we get  a decay estimate for the solution different from the one valid for dimensions $n\geq 5$.

\medskip
We recall that the operator $L_0 (\phi ) = \Delta \phi + 5 w^4 \phi$ has an $4$ dimensional kernel generated by the bounded functions $Z_0$ defined in \eqref{Z0def} and also by
\begin{equation}\label{cf2}
Z_i (y) = {\partial w \over \partial y_i} , \quad i = 1, 2 , 3.
\end{equation}
In the class of radially symmetric functions, the only element in the kernel of $L_0$ is $Z_0$.
To describe our construction, we
 consider an orthonormal basis $\vartheta_m$, $m=0,1,\ldots,$ in  $L^2(S^2)$  of spherical harmonics, namely eigenfunctions of the problem
$$
\Delta_{S^2} \vartheta_m + \la_m \vartheta_m = 0 \inn S^2
$$
so that $0=\la_0 < \la_1 =\ldots= \la_3 = 2< \la_4 \le \ldots $.
Let $h (\cdot , \tau )\in L^2(B_{2R} )$, for any $\tau \in [\tau_0 , \infty)$. We decompose it into the form
$$
h(y,\tau) =     \sum_{j=0}^\infty h_j(r,\tau)\vartheta_j (y/r), \quad r=|y|, \quad h_j(r,\tau) = \int_{S^2} h (r\theta , \tau ) \vartheta_j(\theta) \, d\theta.
$$
In addition, we write $h = h^0 + h^1 + h^\perp$ where
$$
h^0 =  h_0(r,\tau), \quad h^1 = \sum_{j=1}^3 h_j(r,\tau)\vartheta_j, \quad h^\perp = \sum_{j=4}^\infty h_j(r,\tau)\vartheta_j.
$$
Observe that $h^1=h^\perp =0$ if $h$ is radially symmetric in the $y$ variable.
Consider also the analogous decomposition for $\phi$ into $\phi = \phi^0 + \phi^1 + \phi^\perp$. We build the solution $\phi$ of Problem \equ{cf1} by doing so separately for the pairs
$(\phi^0,h^0)$, $(\phi^1, h^1)$ and $(\phi^\perp, h^\perp)$.

\medskip
Our main result in this section is the following proposition.

\begin{proposition} \label{prop0cf}
Let $\nu,a$ be given positive numbers with $0<a <1$. Then, for all sufficiently large $R>0$ and any  $h=h(y,\tau)$  with  $\|h\|_{\nu, 2+a} <+\infty$
that satisfies for all $j=0, 1,\ldots, 3$
\be
 \int_{B_{2R}} h(y ,\tau)\, Z_{j} (y) \, dy\ =\ 0  \foral \tau\in (\tau_0, \infty)
\label{ortion}\ee
there exist  $\phi = \phi [h]$  and $e_0 = e_0 [h]$ which solve Problem $\equ{cf1}$. They define linear operators of $h$
that satisfy the estimates
\be
  |\phi(y,\tau) |  \ \lesssim   \  \tau^{-\nu} \Big [\, \frac {R^{4-a}} { 1+ |y|^3} \,   \|h^0\|_{\nu, 2+a}   +  \frac {R^{4-a}} { 1+ |y|^4} \,   \|h^1\|_{\nu, 2+a} +
  \frac {\|h \|_{\nu, 2+a}}{1+ |y|^a} \Big ] ,
\label{cta1cf}\ee
\be
  |\nabla_y \phi(y,\tau) |  \ \lesssim   \  \tau^{-\nu} \Big [\, \frac {R^{4-a}} { 1+ |y|^4} \,   \|h^0\|_{\nu, 2+a}   +  \frac {R^{4-a}} { 1+ |y|^5} \,   \|h^1\|_{\nu, 2+a} +
  \frac {\|h \|_{\nu, 2+a}}{1+ |y|^{a+1}} \Big ] ,
\label{cta2cf}\ee
and
\be
 | e_0[h]| \, \lesssim \,  \|h\|_{ \nu, 2+a}.
\label{cot2n}\ee
\end{proposition}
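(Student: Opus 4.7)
\medskip
\noindent
\textbf{Proposed proof.} My plan is to treat each spherical mode separately, using the decomposition $h=h^0+h^1+h^\perp$ and the corresponding $\phi=\phi^0+\phi^1+\phi^\perp$ already introduced in the text, and to produce for each mode a solution of the Dirichlet problem
\begin{equation*}
\phi_\tau = \Delta\phi + 5w^4\phi + h^{\,\cdot} \inn B_{2R}\times(\tau_0,\infty),\qquad \phi=0 \onn \partial B_{2R}\times(\tau_0,\infty),
\end{equation*}
with the appropriate initial condition, and then to patch them. The central mechanism in each mode is the construction of an explicit supersolution of power type $\tau^{-\nu}(1+|y|)^{-\kappa}$ that dominates $\phi$; the orthogonality conditions \eqref{ortion} play the role of removing the obstruction produced by the kernel of $L_0=\Delta+5w^4$ in each mode.

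First I would handle the higher modes $\phi^\perp$, which is the easiest case. On the orthogonal complement of $\mathrm{span}\{Z_0,Z_1,Z_2,Z_3\}$ the operator $L_0$ is coercive on radial-in-angle data (in fact its quadratic form is positive definite when tested against functions supported in modes $\ell\ge 2$, by Rey's spectral analysis cited around \eqref{defL0}). A standard energy estimate combined with the blow-up/scaling argument used in Proposition 7.1 of \cite{CDM} gives a solution $\phi^\perp$ to the Dirichlet problem on $B_{2R}$ with zero initial data, satisfying
\begin{equation*}
|\phi^\perp(y,\tau)|+(1+|y|)|\nabla_y\phi^\perp(y,\tau)|\ \lesssim\ \tau^{-\nu}\,\frac{\|h^\perp\|_{\nu,2+a}}{1+|y|^{a}}.
\end{equation*}
Here the decay rate is dictated solely by the right-hand side, as no resonance with the kernel occurs.

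For $\phi^1$, the three-dimensional kernel is spanned by $Z_1,Z_2,Z_3$, which decay like $|y|^{-2}$. The orthogonality assumption $\int_{B_{2R}}h^1\,Z_i\,dy=0$ for $i=1,2,3$ ensures that the elliptic projection onto $\mathrm{span}\{Z_i\}$ of the right-hand side vanishes in a uniform way, up to a boundary term that is harmless after multiplying by the indicator of $B_{2R}$. Following again the scheme of Section 7 of \cite{CDM}, I would solve a time-dependent Dirichlet problem on $B_{2R}$ and construct an explicit barrier of the form $\mathcal{B}_1(y)=R^{4-a}(1+|y|)^{-4}$, exploiting the faster decay of the Green's function of $L_0$ on mode-$1$ data (convolution against $|y|^{-4}$ in the variation-of-parameters formula) to absorb a right-hand side of size $(1+|y|)^{-(2+a)}$. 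This gives
\begin{equation*}
|\phi^1(y,\tau)|+(1+|y|)|\nabla_y\phi^1(y,\tau)|\ \lesssim\ \tau^{-\nu}\,\frac{R^{4-a}}{1+|y|^{4}}\,\|h^1\|_{\nu,2+a}.
\end{equation*}

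The delicate case, which I expect to be the main obstacle, is the radial mode $\phi^0$. Here the obstruction is the only negative-eigenvalue direction of $L_0$, namely the eigenfunction $Z$ of \eqref{eigen0} with eigenvalue $\lambda_0<0$: without correction, any nonzero projection of $\phi^0$ onto $Z$ grows like $e^{|\lambda_0|(\tau-\tau_0)}$. The trick is to use the free parameter $e_0$ in the initial condition to kill this projection. Testing the equation against $Z$ one gets an ODE for $\beta(\tau):=\int \phi^0(y,\tau)Z(y)\,dy$ of the form $\beta'=-\lambda_0\beta+\int h^0 Z\,dy$ (with $-\lambda_0>0$); choosing
\begin{equation*}
e_0\,\int Z^2\,dy\ =\ -\int_{\tau_0}^{\infty} e^{\lambda_0(s-\tau_0)}\!\int_{B_{2R}}h^0(y,s)\,Z(y)\,dy\,ds
\end{equation*}
makes $\beta\equiv 0$ and yields the bound \eqref{cot2n} directly from $\|h\|_{\nu,2+a}<\infty$ (the time integral converges because of the factor $\tau^{-\nu}$). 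Once $\beta\equiv 0$, the remaining projection of $\phi^0$ lies in the $L^2$-orthogonal complement of $Z$, where $L_0$ has $Z_0$ as its only kernel element; the assumption $\int h^0 Z_0\,dy=0$ then makes the problem solvable in this subspace. A supersolution of the form $\mathcal{B}_0(y)=R^{4-a}(1+|y|)^{-3}$ works: outside a large ball it is a supersolution for $\Delta+5w^4$ (since $5w^4\lesssim|y|^{-4}$ and $\Delta(1+|y|)^{-3}\sim |y|^{-5}$), and inside $B_R$ we absorb the constant using the $R^{4-a}$ factor and the fact that $h^0$ is pointwise bounded by $\tau^{-\nu}(1+|y|)^{-(2+a)}\|h^0\|_{\nu,2+a}$. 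Running the parabolic maximum principle and then passing to the gradient by local Schauder estimates gives \eqref{cta1cf}--\eqref{cta2cf} for $\phi^0$. Finally, summing the three modes and using the linearity of the construction in $h$ produces the claimed operators $h\mapsto\phi[h]$ and $h\mapsto e_0[h]$.
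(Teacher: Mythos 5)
Your treatment of the higher modes $\phi^\perp$ and your device for fixing $e_0$ (choosing the initial datum along $Z$ so that the projection $\beta(\tau)=\int\phi^0 Z$ stays bounded, via the backward ODE $\beta'=-\la_0\beta+\int h^0Z$) are both in the spirit of what the paper does; the paper organizes this slightly differently, by first inserting a Lagrange multiplier $c(\tau)Z$ into the equation (Proposition \ref{prop1}) and then removing it by adding $e(\tau)Z$ with $e(\tau)=\int_\tau^\infty e^{\sqrt{\la_0}(\tau-s)}c(s)\,ds$, but that is a cosmetic difference.

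The genuine gap is in your barrier argument for modes $0$ and $1$. In dimension $3$ one has $\Delta\big(|y|^{-3}\big)=6|y|^{-5}>0$, so $\tau^{-\nu}R^{4-a}(1+|y|)^{-3}$ is a \emph{subsolution}, not a supersolution, of $\partial_\tau-\Delta-5w^4$ at infinity: the required inequality $-\Delta g-5w^4g\gtrsim (1+|y|)^{-2-a}$ fails for $g=(1+|y|)^{-3}$. The decaying homogeneous solution of the radial operator behaves like $|y|^{-1}$ (and like $|y|^{-2}$ at mode $1$, where the kernel element is $w_r\sim r^{-2}$), so the comparison principle cannot produce decay faster than $|y|^{-1}$ (resp.\ $|y|^{-2}$); this is precisely the content of the paper's Step 1, which only yields the bounds $R^{4-a}\|H^0\|_{\nu,a}/(1+|y|)$ and $R^{4-a}\|H^1\|_{\nu,a}/(1+|y|^2)$. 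The missing idea is the paper's Step 2: one first forms $H=L_0^{-1}[h]$ by the elliptic variation-of-parameters formula (this is where the orthogonality \eqref{ortion} is used, to make $L_0^{-1}[h]$ decay like $(1+|y|)^{-a}$), solves the parabolic problem with right-hand side $H$, obtains pointwise bounds on $\Phi$, $\nabla\Phi$ and $D^2\Phi$ by rescaled interior Schauder estimates, and then sets $\phi:=L_0[\Phi]$. Since $L_0$ commutes with the heat operator, $\phi$ solves the original equation, and the two extra derivatives convert the $(1+|y|)^{-1}$ (resp.\ $(1+|y|)^{-2}$) decay of $\Phi$ into the claimed $(1+|y|)^{-3}$ (resp.\ $(1+|y|)^{-4}$) decay of $\phi$. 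Without this conjugation step your proof cannot reach the stated rates in \eqref{cta1cf}--\eqref{cta2cf}.
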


\medskip
Proposition \ref{prop0} is a direct consequence of Proposition \ref{prop0cf}. Indeed, if $h$ is radially symmetric in the $y$ variable, \eqref{ortion} is authomatically satified for $j=1, \ldots , 3$, and $h \equiv h^0$.

\medskip
The result contained in Proposition \ref{prop0cf} follows from next Proposition, which refers to the following problem
\be \label{p11}
\phi_\tau  =
\Delta \phi + 5w(y)^4 \phi + h(y,\tau )-c(\tau) Z \inn B_{2R} \times (\tau_0, \infty ), \quad \phi(y,\tau_0) = 0 \inn B_{2R}.
\ee

\medskip

\begin{proposition} \label{prop1}
Let $\nu,a $ be given positive numbers with $0< a < 1 $. Then, for all sufficiently large $R>0$ and any $h$ with  $\|h\|_{\nu, 2+a} <+\infty$ and satisfying the orthogonality conditions \eqref{ortio},
there exist  $\phi = \phi[h]$ and $c=  c[h]$ which solve Problem $\equ{p11}$,  and define linear operators of $h$. The function
$\phi[h]$ satisfies estimate $\equ{cta1cf}$, \eqref{cta2cf}
and for some $\Gamma >0$
\be
\left |  c(\tau) -  \int_{B_{2R}} hZ \right | \, \lesssim \, \tau^{-\nu}\, \Big [\,    R^{2-a} \left \| h -  Z \int_{B_{2R}} hZ\, \right \|_{ \nu, 2+a } +     e^{-\Gamma R} \|h\|_{ \nu, 2+\alpha } \Big ].
\label{cta2}\ee
\end{proposition}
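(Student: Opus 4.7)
The strategy is to build $\phi$ and $c(\tau)$ by an approximation scheme after making a natural choice of $c(\tau)$ via a projection argument. Decompose both $h$ and $\phi$ into spherical harmonic components $h = h^0 + h^1 + h^\perp$, $\phi = \phi^0 + \phi^1 + \phi^\perp$; since $Z$ is radial, the correction $c(\tau) Z$ appears only in the mode-zero equation, and the equations for $\phi^1, \phi^\perp$ decouple into standard linearized parabolic problems.

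For the radial sector, I would choose $c(\tau)$ by testing the equation against $Z$: self-adjointness of $L_0$ together with $L_0 Z = -\lambda_0 Z$ (with $\lambda_0 < 0$) produces an ODE
\[
\frac{d}{d\tau}\int \phi^0 Z \, dy = -\lambda_0 \int \phi^0 Z \, dy + \int h^0 Z \, dy - c(\tau)\int Z^2 \, dy.
\]
Requiring $\int \phi^0(\cdot, \tau) Z \, dy \equiv 0$ (which is compatible with the vanishing initial condition) fixes $c(\tau)$ proportional to $\int h^0 Z \, dy$, matching the leading behavior in \eqref{cta2}. Simultaneously, the orthogonality \eqref{ortio} combined with $L_0 Z_0 = 0$ gives $\int \phi^0(\cdot, \tau) Z_0 \, dy \equiv 0$, so that the would-be resonance along the zero eigendirection is also eliminated.

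Existence is obtained by first solving on finite intervals $[\tau_0, T]$ via standard parabolic theory with the above choice of $c(\tau)$, and then passing to the limit $T\to\infty$. The decay bounds \eqref{cta1cf}--\eqref{cta2cf} are the principal technical point: I would derive them by constructing supersolutions of the form $\tau^{-\nu}(1+|y|)^{-k}$ with $k=3$ in the mode-zero sector, $k=4$ in the mode-one sector, and $k=a$ for $\phi^\perp$. The construction relies on the explicit bubble $w(y) = 3^{1/4}(1+|y|^2)^{-1/2}$ and the fact that $5 w^4 \sim 15\,|y|^{-4}$ at infinity can be absorbed into the Laplacian part of the barrier. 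The different decay rates encode the kernel structure of $L_0$ in each angular block: orthogonality to $Z_0$ removes the $|y|^0$ tail in mode zero, producing the $|y|^{-3}$ decay; in mode one, the $|y|^{-3}$ kernel elements $Z_i = \partial_i w$ are absorbed into the boundary at the cost of the prefactor $R^{4-a}$; higher modes have no kernel element and therefore decay at the rate dictated by the forcing weight.

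The main obstacle is the mode-one barrier construction, where the absence of orthogonality to $Z_i$ in the hypothesis forces one to track carefully the interplay between the Dirichlet behavior at $|y|\sim R$ and the $|y|^{-3}$ kernel directions; closing this estimate at the correct order in $R$ is where the three-dimensional analysis departs from \cite{CDM}. Once the decay estimate for $\phi$ is in hand, estimate \eqref{cta2} follows by integrating the equation against $Z$, substituting the decay of $\phi$ to bound each resulting term, and using the exponential tail $Z \sim |y|^{-1} e^{-\sqrt{|\lambda_0|}\,|y|}$ to produce the $e^{-\Gamma R}$ correction from the contribution supported near $\partial B_{2R}$.
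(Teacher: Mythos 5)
Your overall architecture --- spherical harmonic decomposition, fixing $c(\tau)$ by projecting the mode-zero equation onto $Z$ and enforcing $\int_{B_{2R}}\phi^0 Z=0$, with the $e^{-\Gamma R}$ correction in \eqref{cta2} coming from the boundary terms on $\partial B_{2R}$ --- coincides with the paper's. The gap is in the decay estimates, which you propose to obtain from supersolutions $\tau^{-\nu}(1+|y|)^{-k}$ with $k=3$ in mode zero and $k=4$ in mode one. These are not supersolutions against a forcing of size $\tau^{-\nu}(1+|y|)^{-(2+a)}$ with $0<a<1$: for $\bar\phi=\tau^{-\nu}(1+|y|)^{-3}$ one has
$$
\partial_\tau\bar\phi-\Delta\bar\phi-5w^4\bar\phi \;=\; O\big(\tau^{-\nu-1}|y|^{-3}\big)+O\big(\tau^{-\nu}|y|^{-5}\big),
$$
and neither term dominates $\tau^{-\nu}|y|^{-(2+a)}$ in the range $1\ll |y|\le 2R$ once $\tau$ is large (indeed $-\Delta(r^{-3})=-6r^{-5}<0$, so $r^{-3}$ is not even a stationary supersolution). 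Mode one is the same: $\big(\partial_{rr}+\tfrac2r\partial_r-\tfrac2{r^2}\big)r^{-4}=10\,r^{-6}>0$ at infinity. Orthogonality to $Z_0$ does not repair this: the elliptic solution of $L_0\phi=-h_0$ with $h_0\perp Z_0$ and $|h_0|\lesssim r^{-2-a}$ still only decays like $r^{-a}$, as the variation-of-parameters formula with the kernel pair $Z_0\sim r^{-1}$, $\tilde Z\sim 1$ shows (orthogonality kills the $O(1)$ tail, not the $r^{-a}$ one). So the rates $|y|^{-3}$ and $|y|^{-4}$ in \eqref{cta1cf} cannot be reached by a maximum-principle argument applied directly to \eqref{p11}.

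The mechanism the paper uses, and which is absent from your proposal, is a conjugation by $L_0$. One first sets $H_0:=L_0^{-1}[h_0]$, which (thanks to $\int_{B_{2R}}h_0Z_0=0$) satisfies $|H_0|\lesssim\tau^{-\nu}(1+r)^{-a}\|h_0\|_{\nu,2+a}$; one then solves the Dirichlet problem in $B_{3R}$ with this slowly decaying right-hand side --- this is where your barrier heuristics, together with the energy estimate based on the coercivity $Q(\phi,\phi)\ge \beta R^{-2}\int\phi^2$ on $\{Z\}^\perp$, are actually valid and yield $|\Phi|\lesssim\tau^{-\nu}R^{4-a}(1+|y|)^{-1}$ --- and proves matching first and second derivative bounds by parabolic rescaling on balls of radius comparable to $|y|$. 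Finally one defines $\phi^0:=L_0[\Phi]=\Delta\Phi+5w^4\Phi$, which solves \eqref{p11} in mode zero with $c(\tau)=\la_0 c_0(\tau)$; the two derivatives gained convert the $(1+|y|)^{-1}$ decay of $\Phi$ into the $(1+|y|)^{-3}$ decay of $\phi^0$ (and $(1+|y|)^{-2}$ into $(1+|y|)^{-4}$ in mode one), at the cost of the prefactor $R^{4-a}$, while the $(1+|y|)^{-a}$ term in \eqref{cta1cf} is carried only by the modes $j\ge 4$. Without this step, or an equivalent substitute, your argument establishes at best $|y|^{-a}$ decay in every mode, which is insufficient for the inner--outer gluing scheme.
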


\medskip
Assuming the validity of Proposition \ref{prop1}, we proceed with
\subsubsection*{\bf Proof of Proposition \ref{prop0cf}.} Let $\phi_1$ be the solution of Problem \eqref{p11} predicted by Proposition \ref{prop1}.
Let us write
\be
\phi(y,\tau) = \phi_1(y,\tau) +  e(\tau) Z(y).
\label{vv}\ee
for some $e\in C^1 \left( [\tau_0,\infty) \right)$. We find
$$
\partial_\tau \phi   =   \Delta \phi  + 5w^{4} \phi   + h(y,\tau ) +   \left[ \, e'(\tau)-\la_0 e(\tau) - c(\tau) \, \right] \, Z(y).
$$
We choose $e(\tau)$ to be the unique bounded solution of the equation
$$
e'(\tau)\,-\, \la_0 e(\tau)\, =\,  c(\tau), \quad \tau \in (\tau_0,\infty)
$$
which is explicitly given by
$$
e(\tau )  =   \int_\tau^\infty \exp({ \sqrt{\la_0} (\tau - s)})\, c(s)\, ds  \, .
$$
The function $e$ depends linearly on $h$. Besides, we clearly have from \equ{cta2},
$
|e(\tau)|  \ \lesssim \  \tau^{-\nu} \|h\|_{ \nu, 2+a}.
$
and thus, from the fact that $\phi_1$ satisfies estimates \equ{cta1cf}, \eqref{cta2cf},  so does $\phi$ given by \equ{vv}. Thus
$\phi$ satisfies Problem \equ{cf1} with initial condition $\phi(y,\tau_0) =  e(\tau_0 ) Z(y)$.
The proof is concluded. \qed

\medskip
\noindent
The rest of the Section is devoted to the

\medskip

\begin{proof}[{\bf Proof of Proposition \ref{prop1}}] The proof is divided in two steps. In the first step, we construct a solution to \eqref{p11}
which has value zero on the boundary $\partial B_{2R}$, at any time $\tau$, for a right hand side $h$ not necessarily satisfying the orthogonality conditions \eqref{ortion}. In the second step, we make use of this construction to solve \eqref{p11}, for a right hand side satisfying \eqref{ortion}, and to obtain estimates \eqref{cta1cf}, \eqref{cta2cf} and \eqref{cot2n}.

\medskip
\noindent
{\it {\bf Step $1$}}. \ \ We claim that
for all sufficiently large $R>0$ and any $H$ with  $\|H\|_{\nu, a} <+\infty$
there exists  $\phi = \phi (y, \tau )$ and $c= c(\tau )$ which solve Problem
\be\label{modo0}
 \phi_\tau = \Delta \phi   + 5w^4 \phi  + H(y,\tau) - c(\tau) Z (y)\inn B_{2R}\times (\tau_0,\infty)
 \ee
$$
\phi = 0 \onn  \pp B_{2R} \times (\tau_0,\infty)  ,\quad \phi(\cdot, \tau_0 ) = 0 \inn B_{2R}.
$$
The functions $\phi$ and $c$ are linear operators of $h$
and satisfy the estimates
$$
 (1+ |y| ) \, | \nabla \phi (y, \tau )| + |\phi(y,\tau)|
   \ \lesssim   \
   $$
   \be
   \tau^{-\nu} \Big [
     \frac {R^{4-a}   \| H^0\|_{\nu,2+a} }{ 1+ |y| }     +   \frac {R^{4-a} \| H^1\|_{\nu,2+a} }{ 1+ |y|^{2} }    +   R^2 \frac{ \| H \|_{\nu, a}}{1+|y|^a} \Big ] \label{cota11}\ee
and for some $\Gamma >0$
\be
\left |  c(\tau) -  \int_{B_{2R}} H Z \right | \, \lesssim \, \tau^{-\nu}\, \Big [\,  R^2 \left \| H -  Z \int_{B_{2R}} H Z\, \right \|_{ \nu, a} +     e^{-\Gamma R} \|H\|_{ \nu, a} \Big ].
\label{cota22}\ee

\medskip
We construct the solution $\phi$ mode by mode, considering first mode $0$, then modes $1,2,3$ and finally modes greater or equal to $4$. For each mode, we get the corresponding estimates.

\bigskip
{\it Construction at mode $0$.}
Consider
 Problem \equ{modo0} for a right hand side $H=H_0(r, \tau)$ radially symmetric.
 Let $\eta (s)$ be the smooth cut-off function in \eqref{defeta}, and consider
 $\eta_\ell (y) = \eta ( |y| -\ell )$, for
a large but fixed number $\ell $ independently of $R$. By standard parabolic theory, there exists a unique solution
 $\phi_*[\bar h_0]$ to
  \be\label{a1}
 \phi_\tau = \Delta \phi   + 5w(r)^{4}(1-\eta_\ell ) \phi  + \bar H_0 (y,\tau)  \inn B_{2R}\times (\tau_0,\infty)
\ee
 $$
\phi = 0 \quad {\mbox {on}} \quad  \partial B_{2R} \times (\tau_0,\infty)  ,\quad \phi(\cdot, \tau_0) = 0 \inn B_{2R},
$$
where
$$
\bar H_0 = H_0 -  c_0(\tau) Z , \quad      c_0(\tau) =  \int_{B_{2R}} H_0 (y, \tau ) Z (y) \, dy .
$$
The function $\phi_* [\bar h_0]$ is radial and satisfies the bound
$$
\left| \phi_* [\bar H_0 ] \right| \lesssim  \tau^{-\nu} \, R^{2-a} \,   \| H \|_{ \nu , a} .
$$
This can be proven with the use of a special super solution, arguing as in Lemma 7.3 in \cite{CDM}.
Setting $\phi =  \phi_*[\bar H_0] + \tilde \phi $ and $c(\tau) = c_0(\tau) + \tilde c(\tau)$,
Problem \equ{modo0} gets reduced to
 \be\label{modo01}
 \tilde \phi_\tau = \Delta \tilde \phi   + 5w(r)^{4} \tilde \phi  + \tilde H_0(r,\tau) - \tilde c(\tau) Z\inn B_{2R}\times (\tau_0,\infty)
 \ee
$$
\tilde \phi = 0 \quad {\mbox {on}} \quad   \partial B_{2R} \times (\tau_0,\infty)  ,\quad \tilde \phi(\cdot, \tau_0) = 0 \inn B_{2R} .
$$
where
$
\tilde H_0 = 5 w^{4}\eta_\ell   \phi_*[\bar H_0 ] .
$
Observe that $\tilde H_0$  is radial, it is compactly supported and with size controlled by that of $\bar H_0$. In particular we have that for any $m>0$,
\be \label{tete}
|  \tilde H_0 (r, \tau) |  \lesssim\  { \tau^{-\nu} \over 1+r^m} \, \left[ \sup_{\tau > \tau_0} \tau^{\nu} \| \phi_*[\bar H_0] (\cdot , \tau)  \|_{L^\infty}  \right] \lesssim  { \tau^{-\nu} \over 1+r^m} \, R^{2-a} \,   \| H \|_{ \nu , a}.
\ee
We shall next solve Problem \equ{modo01} under the additional orthogonality constraint
\be \label{modo02}
\int_{B_{2R}}  \tilde \phi (\cdot , \tau )\, Z  \ =\ 0 \foral \tau\in (\tau_0, \infty ) .
\ee
Problem
\equ{modo01}-\equ{modo02} is equivalent to solving just \equ{modo01} for $\tilde c$ given by the explicit linear functional
$\tilde c := \tilde c[\tilde \phi, \tilde H_0] $  determined  by the relation
\be \label{cc}
\tilde c(\tau) \int_{B_{2R}} Z^2    =  \int_{B_{2R}} \tilde  H_0 (\cdot, \tau)  Z     +     \int_{\partial B_{2R}} \partial_r \tilde \phi(\cdot, \tau)  Z .
\ee
If the function $\tilde c= \tilde c  (\tau)$ defined by \eqref{cc}
were independent of $\phi$, standard linear parabolic theory would give the existence of a unique solution. On the other hand, a close look to \eqref{cc} shows that
the dependence of  $\tilde c= \tilde c(\tau )$  on $\phi$ is  small in an $L^\infty$-$C^{1+\alpha, \frac {1+\alpha} 2} $ setting, since $Z( R) = O(e^{-\Gamma R})$ for some $\Gamma>0$.  A contraction argument applies to yield existence of a unique solution to \equ{modo01}-\equ{modo02} defined at all times. To get the estimates, we assume smoothness of the data  so that integrations by parts and differentiations can be carried over, and then arguing by approximations.
Testing \equ{modo01}-\equ{modo02} against $\tilde \phi$ and integrating in space, we obtain the relation
$$
\partial_\tau \int_{B_{2R}} \tilde \phi^2     + Q(\tilde \phi,\tilde \phi)  =  \int_{B_{2R}} g\tilde \phi, \quad g= \tilde H_0 - \tilde c(\tau) Z_0 ,
$$
where $Q$ is the quadratic form defined by
\be\label{Q}
Q(\phi,\phi) := \int \left[  |\nabla \phi|^2 - 5w^4 |\phi|^2 \right]  .
\ee
Since dimension is $3$, there exists $ \beta >0$ such that, for any $\phi$ with $ \int \phi Z = 0$, the following inequality holds
$$
Q(\phi , \phi ) \geq {\beta \over R^2 } \int \phi^2.
$$
The proof of this inequality is a slight modification of the proof for the corresponding inequality in dimensions $n\geq 5$ that can be found in Lemma 7.2 \cite{CDM}, considering that $\int_{B_R} Z_0^2 = O(R)$, as $R\to \infty$, when dimension is $3$.
Thus we have, for some $\beta' >0$,
\be\label{en}
\partial_\tau \int_{B_{2R}} \tilde \phi^2     + \frac{\beta' }{R^2} \int_{B_{2R}} \tilde \phi^2  \lesssim   R^2\int_{B_{2R}} g^2 .
\ee
We observe that from \equ{cc} and \equ{tete} for $m=0$  we get that
$$
|\tilde c(\tau)| \le \tau^{-\nu } K , \quad K:=
\left[ \sup_{\tau > \tau_0} \tau^{\nu} \| \phi_*[\bar H_0] (\cdot , \tau)  \|_{L^\infty}  \right] + e^{-\Gamma R} \left[ \sup_{\tau > \tau_0} \tau^{\nu} \| \nabla \phi_*[\bar H_0] (\cdot , \tau)  \|_{L^\infty}  \right].
$$
Besides,  using again estimate \equ{tete} for a sufficiently large $m$, we get
$$
\int_{B_{2R}} g^2 \ \lesssim\     \tau^{-2\nu } K^2  .
$$
Using that $\tilde\phi(\cdot, \tau_0) =0$ and Gronwall's inequality, we readily get from \equ{en} the $L^2$-estimate
\be\label{K}
\|\tilde \phi (\cdot , \tau ) \|_{L^2(B_{2R})} \  \lesssim \ \tau^{-\nu } R^2  K,
\ee
for all $\tau > \tau_0$. Now, using standard parabolic estimates in the equation satisfied by $\tilde \phi$ we obtain then that on any large fixed radius $\ell>0$,
$$
\|\tilde \phi (\cdot , \tau ) \|_{L^\infty(B_M)} \  \lesssim \ \tau^{-\nu } R^2 \,  K \foral \tau > \tau_0.
$$
Since the right hand side has a fast decay at infinity and taking into account that we are in dimension $3$,  outside $B_\ell $ we can dominate the solution by a barrier of the order $\tau^{-\nu} |y|^{-1} $.
As a conclusion, also using local parabolic estimates for the gradient, we find that
\be \label{i1}
(1+ |y| ) \, |\nabla_y \tilde \phi (y , \tau )| + |\tilde \phi (y , \tau )| \ \lesssim  \tau^{-\nu } \frac{ R^2} {1+ |y|} \left[ \sup_{\tau > \tau_0} \tau^{\nu} \| \phi_*[\bar H_0] (\cdot , \tau)  \|_{L^\infty}  \right]    .
\ee
It clearly follows from this estimate and inequality \equ{tete}  that
the function
\be\label{p0} \phi_0 [h_0] :=  \tilde \phi + \phi_*[\bar H_0] \ee
solves Problem \equ{modo0}  for $H=H_0$ and satisfies
$$
(1+ |y| ) \, |\nabla_y \tilde \phi_0 (y , \tau )|+  |\phi_0(y,\tau)|
   \ \lesssim   \    \tau^{-\nu} \frac {R^{4-a} } { 1+ |y| }   \| H\|_{\nu, a}  \quad
$$
Finally, from \equ{cc} we see that
we have that  $$ c(\tau) =  \int_{B_{2R}}  H Z   +  \int_{B_{2R}}  5w^4 \eta_\ell  \phi_*[\bar H_0 ] \,Z   + O  (e^{-\Gamma R} ) \|H\|_{\nu , a}. $$
From here we find the validity of estimate
$$
\left |  c(\tau) -  \int_{B_{2R}} H_0Z \right | \, \lesssim \, \tau^{-\nu}\, \Big [\,  R^2  \left \| H_0 -  Z \int_{B_{2R}} H_0Z\, \right \|_{  \nu, a} +     e^{-\Gamma R} \|H_0\|_{\nu, a} \Big ].
$$
Hence estimates \equ{cota11} and \equ{cota22} hold. The construction of the solution at mode 0 is concluded.

\medskip
\bigskip
{\it Construction at modes $1$ to $3$.}
Here we consider the case $H=H^1$ where
$
H^1(y,\tau)  =  \sum_{j=1}^3 H_j(r,\tau) \vartheta_j.
$
The function
\be\label{p1}
\phi^1[H^1] :=  \sum_{j=1}^n\phi_j(r,\tau) \vartheta_j.
\ee
solves the initial-boundary value problem
\be\label{modo1}
 \phi_\tau = \Delta \phi   + 5 w^4 \phi  + H^1(y,\tau)  \inn B_{2R}\times (\tau_0,\infty)
 \ee
$$
\phi = 0 \onn  \pp B_{2R} \times (\tau_0,\infty)  ,\quad \phi(\cdot, \tau_0 ) = 0 \inn B_{2R},
$$
if the functions $\phi_j(r,\tau)$ solves
\be\label{modo11}
 \pp_\tau \phi_j  = \mathcal L_1 [\phi_j]  + H_j(r,\tau)   \inn (0,2R)\times (\tau_0,\infty)
 \ee
$$
\partial_r \phi_j(0,\tau) = 0 = \phi_j(R,\tau)  \foral \tau\in (\tau_0,\infty)  ,\quad \phi_j(r, \tau_0 ) = 0 \foral r\in (0,R),
$$
where
\be\label{mathcalL1}
\mathcal L_1 [\phi_j] := \pp_{rr}\phi_j  + 2\frac{ \pp_{r}\phi_j } r -2\frac{\phi_j } {r^2}  + 5w^4 \phi_j.
\ee
Let us consider
the solution of the stationary problem
$
  \mathcal L_1 [\phi]  +  (1+r)^{-a} =0
$
given by the variation of parameters formula
$$
\bar \phi(r) =  Z(r) \int_r^{2R} \frac 1{\rho^2 Z(\rho)^2} \int_0^\rho (1+s)^{-a} Z(s)s^2\, ds
$$
where $Z(r) = w_r(r).$ Since $w_r(r)\sim r^{-2}$ for large $r$, we find the estimate
$
|\bar \phi(r) | \ \lesssim \ \frac { R^{4-a}   } {1 + r^{2}}  .
$
Then, provided that $\tau_0$ was  chosen sufficiently large, the function  $2\|H_j\|_{\nu , a} \tau^{-\nu} \bar \phi (r) $ is a positive super-solution of Problem  \equ{modo11}
 and thus  we find
$
|\phi_j(r,\tau) |   \ \lesssim \ \tau^{-\nu}  \frac { R^{4 -a}     } {1 + r^2}\| H_j\|_{\nu , a} .
$
Hence $\phi^1[H^1]$ given by \equ{p1} satisfies
$$
|\phi^1[H^1] (y,\tau) |   \ \lesssim \ \frac { R^{4-a}  } {1 + |y|^{2}}\| H^1\|_{\nu  , a} .
$$
A corresponding estimate for the gradient follows.

\medskip
\bigskip
{\it Construction at higher modes.}
We consider now the case  of higher modes,
\be\label{modok}
\phi_\tau = \Delta \phi   + 5w^4 \phi  + H^\perp  \inn B_{2R}\times (\tau_0,\infty)
\ee
$$
\phi = 0 \quad {\mbox {on}} \quad  \partial B_{2R} \times (\tau_0,\infty)  ,\quad \phi(\cdot, \tau_0 ) = 0 \inn B_{2R},
$$
where
$H= H^\perp = \sum_{j=4}^\infty H_j(r) \Theta_j$
whose solution has the form
$\phi^\perp = \sum_{j=4}^\infty \phi_j(r, \tau) \Theta_j$.
Given the quadratic form in \eqref{Q},  for $\phi^\perp\in H_0^1(B_{2R})$
\be\label{co}
\int_{B_{2R}} \frac {|\phi^\perp|^2} {r^2} \lesssim Q(\phi^\perp ,\phi^\perp ).
\ee
The proof of this fact is elementary. The interested reader can find it in \cite{CDM}.
Let $\phi_*[H^\perp] $ be the solution to
$$
\phi_\tau = \Delta \phi   + 5w(r)^{4}(1-\eta_\ell )\phi  + \bar H^\perp (y,\tau)  \inn B_{2R}\times (\tau_0,\infty)
$$
$$
\phi = 0 \quad {\mbox {on}} \quad  \partial B_{2R} \times (\tau_0,\infty)  ,\quad \phi(\cdot, 0) = 0 \inn B_{2R},
$$
where
$
\bar H^\perp = H^\perp -  c^\perp Z $, and $c^\perp = \int_{B_{2R}} H^\perp Z $.
By writing $\phi =\phi_*[H^\perp] +\tilde \phi$, Problem \equ{modok}   reduces to solving
\be
 \tilde\phi_\tau = \Delta \tilde\phi   + 5w(y)^4 \tilde \phi  + \tilde H  \inn B_{2R}\times (\tau_0,\infty)
 \ee
$$
\tilde \phi = 0 \quad {\mbox {on}} \quad  \partial B_{2R} \times (\tau_0,\infty)  ,\quad \tilde \phi(\cdot, \tau_0 ) = 0 \inn B_{2R},
$$
where
$
\tilde H = 5w(y)^{4}\eta_\ell  \phi_*[H^\perp] ,
$
for a sufficiently large $\ell $.
Arguing as in \equ{en} we now get
\be\label{en1}
\partial_\tau \int_{B_{2R}} \tilde \phi^2     + c\int_{B_{2R}} \frac {|\tilde \phi|^2}{|y|^2}   \lesssim   \int_{B_{2R}} |y|^2 |\tilde H|^2  .
\ee
Similarly to \equ{K} we get

\be\label{K1}
\|\, |y|^{-1} \tilde \phi (\cdot , \tau ) \|_{L^2(B_{2R})} \  \lesssim \tau^{-\nu}  R^{2-a}  \| H  \|_{\nu , a}
\ee
From elliptic estimates we then get that
$$
\|\tilde \phi (\cdot , \tau ) \|_{L^\infty(B_{2R})} \  \lesssim \ \tau^{-\nu }  R^{2-a}  \| H^\perp  \|_{\nu , a} .  \foral \tau > \tau_0,
$$
so that with the aid of a barrier we obtain

$$
 |\tilde \phi (y , \tau )|   \ \lesssim\    \tau^{-\nu }  R^{2-a}  \| H^\perp  \|_{\nu , a} \,(1+ |y|)^{-1} .
 $$
  It follows that the function
\be \label{pperp}
 \phi^\perp[H^\perp]   : =  \tilde \phi + \phi_*[ H^\perp ] \ee  satisfies
$$
|\phi^\perp [H^\perp] (y , \tau )|    \ \lesssim\   \tau^{-\nu } \, R^2 \, \left [ \,(1+ |y|)^{-1}    +  (1+|y|)^{-a}  \right ]\, \| H^\perp  \|_{\nu , a}\inn B_{2R}.
$$
Similar estimates for the gradient follow.
Conclusion:  let
$$
\phi[h] := \phi^0[h^0] + \phi^1[h^1] + \phi^\perp[h^\perp]
$$
for the functions defined in \equ{p0},  \eqref{p1}, \equ{pperp}. By construction, $\phi[h]$
solves Equation \equ{modo0}. It defines a linear operator of $h$ and satisfies \eqref{cota11}.
The proof of {\it Step 1} is concluded.

\medskip
\noindent
{\it {\bf Step $2$}.} \ \ To complete the proof of Proposition \ref{prop1}, we decompose the right hand side $h$ in \eqref{p11} in modes,
$h= h^0+ h^1 +h^\perp$ as before,
and define separately associated solutions of \equ{p11} in a decomposition $\phi= \phi^0 + \phi^1 +\phi^\perp$.

\medskip

\medskip
\bigskip
{\it Construction at mode $0$.}
For a bounded radial $h=h(|y|)$  defined in $B_{2R}$ with $\int_{B_{2R}} hZ_0=0$, let $\tilde h$ designate the extension of $h$ as zero outside $B_{2R}$.
The equation
$$
\Delta H + 5w^4 (y) H + \tilde h(|y|) = 0 \inn \R^3 , \quad H(y) \to  0 \quad {\mbox {as}} \quad |y|\to \infty
$$
has a  solution $H =: L_0^{-1}[h] $ represented by the variation of parameters formula
\be  H(r) =  \tilde Z(r)\int_r^{\infty}  \tilde h(s)\,Z_0 (s)\, s^{2}\,ds  +   Z_0(r) \int_r^{\infty}   \tilde h(s)\,\tilde Z(s)\, s^{2}\,ds
\label{ll0}\ee
where $\tilde Z(r)$ is a suitable second radial solution of $ L_0[\tilde Z] = 0 $, linearly independent  with $Z_0$.
Mode $0$ function $h_0 = h_0(|y|,\tau)$ is defined in $B_{2R}$, and satisfies $\|h_0\|_{\nu, 2+a} <+\infty $  and $\int_{B_{2R}} h_0 Z_0=0$ for all $\tau$. Then
$H_0:= L_0^{-1}[h_0(\cdot, \tau) ]$ satisfies the estimate
$$
|H_0 (r, \tau ) |\lesssim {\tau^{-\nu} \over (1+ r)^a} \, \| h_0\|_{ \nu , 2+a} .
$$
Let  $\Phi_0[h_0 ]$ be the radial solution  in $B_{3R}$ to
\be\label{modo00}
 \Phi_\tau = \Delta \Phi   + 5w^4 (y)  \Phi  + H_0(|y|,\tau) - c_0(\tau) Z\inn B_{3R}\times (\tau_0,\infty)
 \ee
$$
\Phi = 0 \quad {\mbox {on}} \quad  \partial B_{3R} \times (\tau_0,\infty)  ,\quad \Phi(\cdot, \tau_0 ) = 0 \inn B_{3R},
$$
that we discussed in Step 1.  $\Phi_0[h_0 ]$ defines a linear operator of $h_0$ and satisfies the estimates
\be
  |\Phi_0(y,\tau) |  \ \lesssim   \  \frac{\tau^{-\nu} R^{4-a} } { (1+ |y|) }    \| H_0 \|_{ \nu , a} , \quad
\label{cota111}\ee
where for some $\Gamma >0$
\be\left |  c_0(\tau) -  \int_{B_{2R}} H_0Z \right | \, \lesssim \, \tau^{-\nu}\, \Big [\,  R^{2} \left \| H_0 -  Z \int_{B_{2R}} H_0Z \, \right \|_{ \nu, a} +     e^{-\Gamma R} \|H_0\|_{ \nu, a} \Big ].
\label{cota222}\ee
Since $L_0[Z ] = \la_0 Z$ then
$$
\la_0 \int_{B_{2R}} H_0Z = \int_{B_{2R}} H_0 L_0[ Z]  =    \int_{B_{2R}} L_0[H_0]\, Z  + \int_{\partial B_{2R}}  (Z \partial_\nu H_0 -  H_0 \partial_\nu Z),
$$
and hence
$$
 \int_{B_{2R}} H_0Z    =   \la_0^{-1} \int_{B_{2R}} h_0\, Z  +  O(e^{-\Gamma R}) \tau^{-\nu} \| h_0\|_{\nu, 2+ a} .
$$
Also, from the definition of the operator $L_0^{-1}$ we see that $  Z  =  \la_0 L_0^{-1}[Z] $.
Thus
$$
\left \| H_0 -  Z \int_{B_{2R}} H_0Z\, \right \|_{ \nu,a} = \left \|L_0^{-1}\Big [ h_0 -  \la_0 Z \int_{B_{2R}} H_0Z  \Big ] \, \right \|_{ \nu, a}
\ \lesssim \
\left \| h_0 -  Z \int_{B_{2R}} h_0Z\, \right \|_{ \nu, 2+a } +     e^{-\Gamma R} \|h_0\|_{ \nu, 2+a}.
$$
Next, we discuss estimates on the first and second derivatives of $\Phi_0$.
Let us fix now a  vector $e$ with $|e|=1$, a large number $\rho>0$ with $\rho \le 2R$ and a number $\tau_1 \ge \tau_0$. Consider the change of variables
$$
\Phi_\rho (z,t) := \Phi_0 ( \rho e +  \rho z , \tau_1 +  \rho^2 t ), \quad H_\rho (z,t) := \rho^2 [ H_0 ( \rho e +  \rho z , \tau_1 +  \rho^2 t ) - c_0( \tau_1 +  \rho^2 t) Z(\rho e +
\rho z)\, ].
$$

Then $\Phi_\rho( z, t) $ satisfies an equation of the form
$$
 \partial_t \Phi_\rho  = \Delta_z \Phi_\rho   +  B_\rho(z,t)  \Phi_\rho   +     H_\rho  (z,t)  \inn B_1(0) \times (0,2). 
$$
where  $B_\rho = O(\rho^{-2})$ uniformly in $B_2(0) \times (0,\infty)$.
Standard parabolic estimates yield that for any $0<\alpha < 1$
$$
 \|\nabla_z \Phi_\rho \|_{L^\infty  (B_{\frac 12 }(0) \times (1,2))  } \lesssim  \|\Phi_\rho \|_{L^\infty ( B_{1 }(0) \times (0,2)) } + \| H_\rho \|_{L^\infty  (B_{1 }(0) \times (0,2)) }  .
$$
Moreover
$$
\| H_\rho \|_{L^\infty  (B_{1 }(0) \times (0,2)) }\ \lesssim\  \rho^{2-a}\tau_1^{-\nu}\| H_0 \|_{\nu , a} ,\quad
\|\Phi_\rho \|_{L^\infty  (B_{1 }(0) \times (0,2)) } \lesssim  \tau_1^{-1} K(\rho)
$$
where
\be\label{Knew}
K(\rho) =
\frac{ R^{2-a}} {  1+\rho}  R^{2}  \| h^0\|_{\nu,2+a}
\ee
This yields in particular that
$$
\rho |\nabla_y \Phi(\rho e , \tau_1 + \rho^2 )|  \ = \  |\nabla\tilde \phi(0, 1)| \lesssim   \tau_1^{-\nu} K(\rho) .
$$
Hence if we choose $\tau_0 \ge R^2$,
we get that for any $\tau > 2\tau_0$ and $|y|\le  3R$
\be \label{coco}
(1+ |y|)\, |\nabla_y \Phi(y , \tau  )|  \ \lesssim  \   \tau^{-\nu} K(|y|)
\ee
We obtain that these bounds are as well valid for $\tau < 2\tau_0$   by the use of similar parabolic estimates up to the initial time (with condition 0).

\medskip
Now, we observe that the function $H_0$ is of class $C^1$ in the variable $y$ and  $\|\nabla_y H_0\|_{ \nu , 1+a} \le  \|h^0\|_{\nu, 2+a} $. It follows that we have the estimate
$$
(1+ |y|^2)\, | D^2_y \Phi(y,\tau) |\ \lesssim \  \tau^{-\nu} K(|y|)
$$
for all $\tau> \tau_0$, $|y|\le  2R  $.
where $K$ is the function in \equ{Knew}.
The proof follows simply by differentiating the equation satisfied by $\Phi$, rescaling in the same way we did to get the gradient estimate, and apply the bound already proven for
 $\nabla_y \Phi$. Thus we have in $B_{2R}$
$$
 (1+|y|^2) |D^2\Phi(y,\tau)| +  (1+|y|) | \nabla \Phi(y,\tau)| \  + \  |\Phi(y,\tau)|
 \, \lesssim\,     \tau^{-\nu} \|h^0\|_{\nu, 2+a } \frac{R^{4-a}} {1+ |y| }    .
$$
This yields in particular
$$
| L_0 [\Phi](\cdot, \tau ) |\ \lesssim\  \tau^{-\nu} \|h^0\|_{\nu, 2+a } \frac{R^{4-a}} {1+ |y|^3 } \inn B_{2R}
$$
We define
$$
\phi^0[h_0] := L_0 [\Phi]\, \Big |_{B_{2R}}.
$$
Then  $\phi^0[h_0]$ solves Problem \equ{p11}
with
\be\label{ccnew}
c(\tau) := \la_0 c_0(\tau).
\ee
$\phi^0[h_0]$ satisfies the estimate
\be\label{pp0}
| \phi^0[h_0] (y, \tau) | \ \lesssim \ \tau^{-\nu} \|h_0\|_{\nu, 2+a } \frac{R^{4-a}} {1+ |y|^3 }  \inn B_{2R}.
\ee
and from \equ{cota222},
estimate \equ{cta2} holds too.

\medskip

\medskip
\bigskip
{\it Construction for modes $1$ to $3$.}
We consider now
$h^1(y,\tau) =  \sum_{j=1}^3 h_j (r , \tau ) \vartheta_j $  with  $\|h^1\|_{\nu,2+a } <+\infty$
that satisfies for all $i=1,\ldots, 3$
$
\int_{B_{2R}} h^1 Z_i = 0 \foral \tau\in (\tau_0, \infty).
$
We will show that
there is  a solution $$ \phi^1[h^1]  = \sum_{j=1}^3 \phi_j (r, \tau ) \vartheta_j({y \over r}) $$
to Problem $\equ{p11}$ for $h= h^1$, which define a linear operator of $h^1$
and satisfies the estimate
\be
  |\phi^1(y,\tau)|  \ \lesssim   \   \frac {R^4}{ 1+|y|^{4} }R^{-a}     \|h\|_{\nu, 2+a} .
\label{cta4}\ee
Let us fix $1\le j\le 3$.
For a function  $h = h_j(r  ) \vartheta_j  ({y \over r})  $ defined in $B_{2R}$, we
let $H= L_0^{-1}[h] := H_j(r) \vartheta_j ({y \over r}) $ be the solution of the equation
$$
\Delta H + pU^{p-1}H + \ttt h_j \vartheta_j  = 0 \inn \R^n , \quad H(y) \to  0 \ass |y|\to \infty
$$
where $\ttt h_j$ designates the extension of $h_j$ as zero outside $B_{2R}$, represented by the variation of parameters formula
$$
H_j(r) =  w_r(r) \int_r^{2R} \frac 1{\rho^{n-1} w_r(\rho)^2} \int_\rho^\infty \ttt h_j(s)\, w_r(s)s^{n-1}\, ds
$$
If we consider a function $h^j = h_j(r,\tau)\vartheta_j$ defined in $B_{2R}$ with $\|h^j\|_{2+a, \nu} <+\infty $  and $\int_{B_{2R}} h^jZ_{j}=0$ for all $\tau$, then
$H_j= L_0^{-1}[h^j(\cdot, \tau) ]$ satisfies the estimate
$
\| H_j\|_{\nu, a } \lesssim \| h_j \|_{\nu, 2+ a} .
$
Let us consider the boundary value problem in $B_{3R}$
\be\label{modo001}
 \Phi_\tau = \Delta \Phi   + pU(y)^{p-1} \Phi  + H_j(r)\vartheta_j (y) \inn B_{3R}\times (\tau_0,\infty)
 \ee
$$
\Phi = 0 \onn  \pp B_{3R} \times (\tau_0,\infty)  ,\quad \Phi(\cdot, \tau_0 ) = 0 \inn B_{3R}.
$$
As consequence of Step 1, we find a solution $\Phi_j[h]$ to this problem, which defines a linear operator of $h_j$ and satisfies the estimates
\be
  |\Phi_j(y,\tau) |  \ \lesssim   \  \frac{\tau^{-\nu} R^{3-a}} { 1+ |y|^{2}}  R^{1}  \| h_j\|_{\nu, 2+ a} , \quad
\label{cota112}\ee
Arguing by scaling and parabolic estimates, we find as in the construction for mode 0,
$$
| L[\Phi_j](\cdot, \tau ) |\ \lesssim\  \tau^{-\nu} \|h\|_{\nu, 2+a} \frac{R^{4-a}} {1+ |y|^4} \inn B_{2R}.
$$
We define
$
\phi_j[h_j] := L[\Phi_j]\, \Big |_{B_{2R}}.
$
and
$
\phi^1[h^1] := \sum_{j=1}^3 \phi_j[h_j]\vartheta_j .
$
This function solves \equ{p11} for $h= h^1$ and satisfies
\be\label{pp1}
| \phi^1[h^1] (y, \tau) | \ \lesssim \ \tau^{-\nu} \|h_j\|_{2+a,\nu} \frac{R^{4-a}} {1+ |y|^4 }  \inn B_{2R}.
\ee

\medskip

\medskip
\bigskip
{\it Construction at higher modes.}
In order to deal with the higher modes, for
$h= h^\perp = \sum_{j=4}^\infty h_j(r) \Theta_j$
we let $\phi^\perp[ h^\perp]$ be just the unique solution of the problem
\be\label{modoknew}
 \phi_\tau = \Delta \phi   + pU(y)^{p-1} \phi  + h^\perp  \inn B_{2R}\times (\tau_0,\infty)
 \ee
$$
\phi = 0 \quad {\mbox {on}} \quad  \partial B_{2R} \times (\tau_0,\infty)  ,\quad \phi(\cdot, \tau_0 ) = 0 \inn B_{2R},
$$
which 
is estimated as
\be\label{pport}
|\phi^\perp [h^\perp] (y , \tau )|    \ \lesssim\   \tau^{-\nu }\frac {\| h^\perp  \|_{\nu , 2+a}}{ 1+|y|^{a}}\, \inn B_{2R}.
\ee
We just let
$$
\phi [h]:= \phi^0[h^0] + \phi^1[h^1]  + \phi^\perp [h^\perp]
$$
be the functions constructed above. According to estimates \equ{pp0} and \equ{pport} we find that
this function solves Problem  \equ{p11} for $c(\tau)$ given by \equ{cc}, with bounds \eqref{cta1cf}, \eqref{cta2cf}, \equ{cta2} as required. The proof is concluded.

\end{proof}

\setcounter{equation}{0}
\section{Non radially symmetric case}\label{nr}

In this section, we discuss the existence of solutions for Problem \eqref{p} when  the initial condition is  not radially symmetric, and we discuss the co-dimension $1$ stability. Let
$\bar v_0$ be a positive, uniformly bounded smooth function, not radially symmetric and define
\be \label{ICnew}
v_0 (x) = {\bar v_0 (x) \over |x|^\kappa }
, \quad {\mbox {with}} \quad \kappa > \max \{{\gamma + 3 \over 2} , \gamma \}.
\ee
We construct a solution to the initial value Problem
\begin{equation}
\label{pnew}
\left\{\begin{array}{l} u_t = \Delta u + u^5 , \quad {\mbox {in}} \quad \R^3 \times (t_0 ,\infty) , \\
\quad  u (x, t_0 ) = u_0 (|x| ) + v_0 (x)
\end{array}
\right.
\end{equation}
where $u_0$ is radial and satisfies the decay condition \eqref{IC}, while  $v_0$ is a non radial function of the form \eqref{ICnew}.

Since the strategy of the proof is similar to the one already performed in details for $\bar v_0 (x) \equiv 0$, we shall indicate the changes in the argument that are required when the initial condition is not radially symmetric.

We start with a slightly different first approximation.
Let $p = p (t) :[t_0 , \infty ) \to  \R^3$ be a smooth function so that
$$
 p(t_0) = {\bf 0} , \quad p(t) = \int_{t_0}^t P(s) \, ds, \quad {\mbox {where}} \quad P \quad {\mbox {satisfies}}
 $$
 \begin{equation}\label{boundP}
 \| P \|_\diamondsuit := \sup_{t > t_0 } \,  \mu_0 (t)^{-{1 \over 2}} t^{\kappa -1} \, \left[ \| P(s)  \|_{\infty , [t, t+1]}   + [P ]_{0, \sigma , [t,t+1]} \right] \,  \leq \ell,
 \end{equation}
 with $\sigma$ the number fixed in \eqref{lambdabound}, and $\ell $ a positive fixed number. Observe that, under these assumptions, and the bound on $\kappa $ in \eqref{ICnew}, we have
 $
{|p(t)| \over \mu_0 (t) } \to 0$ as $t \to \infty$.
 Define
 \begin{equation}\label{maggio1}
 U [\la , P] (x,t) = \hat U_2 (x,t)  + U_3  (x,t), \quad \hat U_2 (x,t) := U_2 (|x-p (t) | , t),
 \end{equation}
 where $U_2$ is given by \eqref{defU2} and
 \begin{equation}
 \label{maggio2}
 U_3 (x,t) = \left( 1- \eta ({|x| \over t} ) \right) v_0 (x).
 \end{equation}
If we call ${\mathcal E} [\la, P] (x,t) := \Delta U + U^5 - U_t$, we can write
\begin{align*}
{\mathcal E} [\la , P] (x,t) &= {\mathcal E}_2 [\la] (|x-p|, t) - \nabla U_2 (|x-p| , t) \, \cdot \, \dot p (t) \\
&+ \underbrace{\Delta U_3 - {\partial U_3 \over \partial t} + (\hat U_2 + U_3 )^5 - (\hat U_2)^5}_{:= {\mathcal E}_3}.
\end{align*}
Define
\begin{align}\label{maggioe}
\bar {\mathcal E} (x,t) &= {\mathcal E}_{21} (|x-p|,t) \\
&+  \left(1-\eta_R ({|x| \over R \mu_0 } ) \right) \left[ {\mathcal E}_{22} (|x-p|,t)    - \nabla U_2 (|x-p| , t) \, \cdot \, \dot p (t) \right] \nonumber
\end{align}
where $\eta_R$ is defined in \eqref{cut1}.
We have that
\begin{equation}\label{maggio4}
\left| \bar {\mathcal E} (x,t) \right| \leq C \mu_0^{1\over 2} t^{-{3\over 2}} h_0 ({|x| \over \sqrt{t} } ), \quad
\left| {\mathcal E}_3 (x,t) \right| \leq C t^{-\kappa + {1\over 2} } h_0 ({|x| \over \sqrt{t} } ).
\end{equation}
A solution to \eqref{pnew} does exist and has the form
\be\label{solnew}
u= U [\lambda , P ] (r,t) + \tilde \phi, \quad t > t_0
\ee
where $U$ is defined in \eqref{maggio1}, while $\tilde \phi (x,t )$ is  given as in \eqref{deftildephi}
$$
\tilde \phi(x,t) =  \psi (x,t) +    \phi^{in} (x,t)
\quad
{\mbox {where}} \quad
 \phi^{in}(x,t) : =   \eta_{R} (x,t) \hat \phi  (x,t)
$$
and
$
\hat \phi (x,t) := \mu_{0}^{-\frac{1} 2} \phi \left (\frac {x} {\mu_{0}} , t     \right )
.
$
 For any
$ \psi_0 \in C^2 (\R^3 )$ so that
\begin{equation}\label{ultimo}
 |y| \, | \psi_0 (y) | + |y| \, |\nabla  \psi_0 (y) | \leq t_0^{-a} e^{-b |y|} , \quad
\end{equation}
 for some positive constants $a$ and $b$,   the function  $\psi$ is the solution to
 \begin{align} \label{equpsinew}
\partial_t \psi &=  \Delta \psi +  V  \psi  +
  [2\nabla \eta_{R} \nabla_x \hat \phi +  \hat \phi (\Delta_x-\partial_t)\eta_{R} ]\nonumber \\
&+   N [\lambda ]( \tilde \phi  ) +  \bar {\mathcal E}_  + {\mathcal E}_{3}  \inn \R^3 \times [t_0,\infty),
\\
\psi (x, t_0 ) &= \psi_0  , \nonumber
\end{align}
where $V$ is defined as in \eqref{defVmu} with $U$ instead of $U_2$, and
$
 N(\tilde \phi )  =  (U + \tilde \phi)^5 - U^5 - 5\, U^4 \, \tilde\phi.
$
This solution $\psi$ can be described as follows
\begin{equation}\label{maggio24}
\psi (x,t) = \psi_r (x,t) + \psi_{nr} (x,t),
\end{equation}
where $\psi_r $ is a radial function in $|x-p(t)|$, for any $t$, and
\begin{equation}\label{maggio24est}
\left| \psi_r (x,t) \right|\leq C \mu_0^{1\over 2} t^{-{1\over 2}} \, \varphi_0 ({|x| \over \sqrt{t}} ), \quad
\left| \psi_{nr} (x,t) \right|\leq C t^{-\eta + {3\over 2}}\, \varphi_0 ({|x| \over \sqrt{t}} ).
\end{equation}
We refer to \eqref{defvarphi0} for the definition of $\varphi_0$.

On the other hand, the function $\hat \phi$ satisfies
$$
\partial_t \hat \phi =  \Delta \hat \phi+ 5  w_\mu^4 \hat \phi + 5 w_\mu^4 \psi +  {\mathcal E}_{22} (|x-p (t)|,t)    - \nabla U_2 (|x-p (t)| , t) \, \cdot \, \dot p (t) \inn  B_{2R\mu_{0}} (0)  \times [t_0,\infty),
$$
with $\hat \phi (x,t_0) = \mu_0^{-{1\over 2}} (t_0 ) e_0 Z ({x\over \mu_0 (t_0) })$.
In terms of $\phi$, this equation becomes
\begin{align} \label{equ3new}
\mu_{0}^2 \partial_t  \phi  =&  \Delta_y \phi +  5 w^4 \phi
+
  f ( y,t)
 \inn  B_{2R} (0)  \times [t_0,\infty)    \\
\phi (y,t_0 )  &= e_0 Z(y) \nonumber
\end{align}
where
\begin{align*}
f (y,t) &= \mu_{0}^{\frac{5}2}  {\mathcal E}_{22} (|\mu_0 y -p (t)|,t)    - \nabla U_2 (| \mu_0 y -p (t)| , t) \, \cdot \, \dot p (t)  \\
&+ 5 \,  { \mu_{0}^{\frac{1}2} \over (1+\Lambda )^4 } \, w^4 ( {y \over (1+ \Lambda )^2}  ) \psi(\mu_{0} y,t) +  B[\phi] + B^0 [\phi].
\end{align*}
In the above expression, $\psi $ is the solution to \eqref{equpsinew}, while
$B$ and $B^0$ are defined respectively in \eqref{gajardo1} and \eqref{gajardo2}.
The solution $\phi$ exists in the class of functions with $\| \cdot \|_{\nu, a}$-norm bounded (see \eqref{phibound}), as consequence of Proposition \ref{prop0cf}, and a contraction type argument, provided the parameter functions $\la $ and $P$ can be chosen so that
\begin{equation}\label{maxx}
\int_{B_R} f(y,t) Z_j (y) \, dy = 0, \quad {\mbox {for all}} \quad t>t_0, \quad j=0, 1, \ldots , n.
\end{equation}
The system of $(n+1)$ non linear, non local equations in $\la $ and $P$ is solvable for $\la $ and $P$ satisfying \eqref{lambdabound} and \eqref{boundP}. Indeed, equation \eqref{maxx}, for $j=0$, can be treated as we did for equation \eqref{na1} in Sections \ref{secpar}, \ref{nonlocal}, \ref{secpar1}. On the other hand, when $j=1, \ldots , n$, equations \eqref{maxx} are perturbations of
$$
\dot p (t) = \mu_0^{1\over 2} t^{-\kappa +1} \bar u
$$
for some fixed vector $\bar u \in \R^3$. Thus it can be solved for parameters $p(t) = \int_{t_0}^t P(s) \, ds$ satisfying \eqref{boundP}.
This concludes the proof of existence of a positive global solution to \eqref{pnew}.

\medskip
Next we discuss the co-dimension $1$ stability. Let us  observe that the construction of $\phi $, and
$e_0 $ solution to \eqref{equ3new} is possible for any initial condition $\psi_0$ to the outer Problem \eqref{equpsinew}. We have the validity of Lipschitz dependence of  $\phi = \phi [\psi_0 ] $, and $e_0 = e_0 [\psi_0 ]$ in the $C^1$-topology described in \eqref{ultimo}. As a
consequence of the Implicit Function Theorem the maps $\phi [\psi_0 ] $, and $e_0 [\psi_0 ]$ depends in $C^1$-sense on $\psi_0$ in our $C^1$-topology
\eqref{ultimo}, thanks to the corresponding dependence for  $\psi$, $\la$ and $p$.

Let us consider the following map defined  in a small neighborhood of 0 in  $X= C^1(\bar\Omega)$.
$$
F (\psi_0) =    \psi_0 -    (e_{0} [\psi_0]- e_{0}[t_0])  Z_{0}
$$
so that $F[0]=0$, $F$ is differentiable and
$$
D_{\psi_0} F (0) [h]  =  h -    \langle  D_{\psi_0} e_{0} [0], h \rangle  Z_{0}, \quad h\in X.
$$
We have a solution which blows-up as $t\to +\infty$  provided that
\be\label{pq}
u(\cdot,t_0)   =   u^*(\cdot ,t_0) -    e_{0} [0]  Z_{0}    + g
\ee
where $u^* $ is the solution corresponding to $\psi_0 = 0$, and $g= F[\psi_0] $ for any small $\psi_0$.

The vector space of the functionals in $X$ given by  $D_{\psi_0} e_{0} [0]$ has dimension $1$.
We write $W:= {\rm  Ker}\, (D_{\psi_0} e_{0} [0])$ is a space with codimension $1$. Indeed,
we can find a non zero function $u$ such that
$$
X =     W\oplus  < u >.
$$
We consider the operator in a neighborhood of $0$ in  $X$ given  by
$$
G\big( w+   \alpha u \big ) \ =\   \alpha u+  F(w) , \quad \alpha_j\in \R,\quad w\in W.
$$
Then $G$ is of class $C^1$ near the origin, $G(0)=0$  and  $D_{\psi_0} G(0) [h] = h$. By the local inverse theorem, $G$ defines a local $C^1$ diffeormorphism onto a neighborhood of the origin. For all small $g$ we can find smooth functions $\alpha(g)$, $w(g)$ with
$$ \alpha(g) u +  F(w(g)) = g. $$ Thus the set  $\mathcal M$ of functions $F[w]$, $w\in W$ can be described in a neighborhood of $0$ exactly as those $g\in X$
such that
$$
\alpha (g) = 0 .
$$
 This says precisely that $\mathcal M$  is locally
a codimension $1$ $C^1$-manifold, such that if $g$ in \equ{pq} is selected there, then the desired phenomenon takes place.  The proof is concluded. \qed

\setcounter{equation}{0}
\section{Appendix A}\label{appeA}

\begin{proof}[Proof of Lemma \ref{uno}]
We denote by $y_2 (s)$ the solution to \eqref{app0}
 with $\lim_{s \to \infty } s^{2\nu}  y_2 (s) = 1$, and by $y_1(s)$ another solution, linearly independent from $y_2$, defined explicitly by
 \begin{equation}
 \label{app}
 y_1(s) = c \, y_2 (s) \, \int_s^\infty {e^{-{z^2 \over 4} } \over y_2(z)^2 \, z^2 } \, dz,
\end{equation}
for some positive constant $c$ we fix later.
The function $y_1 (s)$ decays fast at infinity, since $y_1 (s) = c_1 e^{-{s^2 \over 4}} \, s^{4\nu - 3} \, \left(1+ o(s^{-1} ) \right)
$, as $s \to \infty$, for some positive constant $c_1$, as a direct consequence  from \eqref{app}.
The function $y_2 (s)$ is  definite for any $s \in (0,\infty)$, and it is positive. Indeed, we first observe that the operator $L_\nu$ satisfies the maximum principe. This is consequence of the fact that the positive function $g_0 (s) = {e^{-{s^2 \over 4}} \over s} $, which solves $L_1 (g_0 ) = 0$, satisfies $L_\nu (g_0 ) <0$ in $(0,\infty )$. With this is mind, we define $\bar g_0 (s) =\int_s^\infty {e^{-{z^2 \over 4}} \over z^2} \, dz$. This is a positive function, which satisfies $L_\nu (\bar g_0 ) = \nu \bar g_0 >0$ in $(0, \infty)$. Thus $\bar g_0$ is a sub solution. Moreover, it is easy to see that $\bar g_0 (R) < y_2 (R)$ for any $R$ large enough. A standard application of the maximum principle thus gives that $y_2$ is positive in $(0, \infty)$.

\medskip
We now claim that $\lim_{s \to 0^+} s \, y_1 (s)$ exists and it is positive.  Write $y_1 (s) = \phi ({s^2 \over 4} )$, $x= {s^2 \over 4}$, from which we get that
$$
x \phi'' + ({3\over 2} + x ) \phi' + \nu \phi = 0 , \quad x \in (0, \infty).
$$
Performing the further change of variables $\phi (x ) = e^{-x} \varphi (x)$, we get that $\varphi$ satisfies
\begin{equation}
\label{manu}
x \varphi'' + ({3\over 2} - x) \varphi' - ({3\over 2} - \nu) \varphi = 0, \quad x \in (0,\infty).
\end{equation}
In \cite{FHV}, Appendix A, it is proven that \eqref{manu} admits polynomial solutions if and only if
${3\over 2} - \nu = -k$, $k=0,1,2,\ldots $. Since ${1\over 2} <\nu < 1 $, this never happens, thus $\varphi$ can not be bounded, as
$x \to 0^+$. On the other hand,  the behavior of the solutions to \eqref{manu}, as $x \to 0^+$, are determined by $x \varphi'' + {3\over 2} \varphi' = 0$, which implies that
the solutions to \eqref{manu} are bounded around $x=0$, or they behave like
 $x^{-{1\over 2}}$ as $x\to 0^+$.
 Combining all the above information, we showed that, for a proper choice of the constant $c$ in \eqref{app}, we get that
 $$
 y_1 (s ) = {1\over s} (1+ o(1) ) , \quad {\mbox {as}} \quad s\to 0.
 $$
To understand further the behavior of $y_1$ around $s=0$, we write $s y_1 (s) = f(s)$, so that
\begin{equation}\label{app1}
f'' + {s\over 2} f' + (\nu  -{1 \over 2} ) f = 0 , \quad s \in (0, \infty).
\end{equation}
Integrating \eqref{app1} between $0$ and $\infty$, and using the fast decay of $y_1$ to $0$ as $s \to \infty$, we compute
\begin{equation}\label{app4}
f' (0) = (\nu -1)  \int_0^\infty f(s) \, ds <0, \quad f'' (0) = {1\over 2} -\nu.
\end{equation}
With this information, we get the estimates \eqref{app11} and \eqref{app12} for $y_1 (s)$.

Since the Wronskian associated to Problem \eqref{app0} is given by a multiple of ${e^{-{s^2 \over 4}} \over s^2}$, we conclude that, since $y_1$ is unbounded as $s\to 0^+$, we have that $y_2 (s)$ is bounded, as $s \to 0^+$. This concludes the proof of the Lemma.

\end{proof}

\begin{lemma}\label{lemacalor}
Let $ h = h(s)$ be a smooth function defined for $s \geq 0$ so that
$$
h(s ) =\left\{ \begin{matrix}
{1\over s} & \quad {\mbox {for}} \quad s\to 0 \\
{1\over s^3} & \quad {\mbox {for}} \quad s\to \infty
\end{matrix}\right.
$$
Then there exists a solution to
\begin{equation}\label{cal1}
\partial_t \psi = \Delta \psi + t^{-\beta } h ({r\over \sqrt{t}} ),
\end{equation}
of the form
\begin{equation}\label{cal2}
\psi (r,t) = t^{-\beta +1} \varphi ({r\over \sqrt{t}} ), \quad {\mbox {with}} \quad
\varphi (s) = \left\{ \begin{matrix}
s & \quad {\mbox {for}} \quad s\to 0 \\
{1\over s^3} & \quad {\mbox {for}} \quad s\to \infty
\end{matrix}\right. .
\end{equation}
\end{lemma}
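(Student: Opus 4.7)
The natural thing to do is to substitute the self-similar ansatz $\psi(r,t)=t^{-\beta+1}\varphi(s)$ with $s=r/\sqrt{t}$ into \eqref{cal1}. Matching the common $t^{-\beta}$-prefactor reduces the PDE to the linear second-order ODE
\[
L_{\beta-1}\varphi \,:=\, \varphi''(s) + \Bigl(\tfrac{2}{s} + \tfrac{s}{2}\Bigr)\varphi'(s) + (\beta-1)\varphi(s) \;=\; -h(s),\qquad s\in(0,\infty),
\]
which belongs to the family of operators analyzed in Lemma~\ref{uno} with $\nu=\beta-1$. I would first produce two linearly independent solutions $y_1,y_2$ of the homogeneous equation $L_{\beta-1}g=0$ by the same Frobenius-plus-asymptotics procedure used to prove Lemma~\ref{uno}: one with the singularity $y_1(s)\sim 1/s$ at $s=0$ and exponential decay $y_1\sim c_1 e^{-s^2/4}s^{4(\beta-1)-3}$ at infinity, and another with $y_2(s)$ bounded at $s=0$ and algebraic decay $y_2\sim c_2/s^{2(\beta-1)}$ at infinity. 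The Wronskian is $W(y_1,y_2)(s)=c_W\,s^{-2}e^{-s^2/4}$ for some nonzero $c_W$.

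With these in hand, variation of parameters gives the particular solution
\[
\varphi_p(s)\;=\;\frac{1}{c_W}\Bigl[\,y_1(s)\!\int_0^{s}\! y_2(\sigma)h(\sigma)\sigma^{2}e^{\sigma^{2}/4}\,d\sigma \;-\; y_2(s)\!\int_0^{s}\! y_1(\sigma)h(\sigma)\sigma^{2}e^{\sigma^{2}/4}\,d\sigma\Bigr].
\]
Near $s=0$ the integrands behave like $\sigma$ and like a constant respectively (using $y_1\sim 1/\sigma$, $y_2=O(1)$, $h\sim 1/\sigma$, $\sigma^{2}e^{\sigma^{2}/4}\sim\sigma^{2}$), so the two integrals are $O(s^{2})$ and $O(s)$ and combine with the prefactors to produce $\varphi_p(s)=O(s)$. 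I would then add the admissible homogeneous correction $Ay_1+By_2$ and fix $A,B$ so that both free coefficients in the expansion at $s=0$, namely the $1/s$ and the constant term, vanish; this pins down $\varphi$ uniquely and yields $\varphi(s)\sim s$ as $s\to 0^+$.

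For the behavior at infinity I would read the dominant balance directly from $L_{\beta-1}\varphi=-h$: inserting $\varphi\sim A_\infty s^{-3}$ against $h(s)\sim s^{-3}$ forces $A_\infty=1/(\tfrac{3}{2}-(\beta-1))$, which is a genuine particular-solution contribution. The general solution is $\varphi_p+Ay_1+By_2$; the $y_1$-component is exponentially negligible at infinity, so the only competitor to the desired $s^{-3}$ decay is the algebraic piece $By_2\sim Bs^{-2(\beta-1)}$. To conclude, I would expand $\varphi_p$ at infinity by splitting $h$ as an explicit sharp tail of the form $s^{-3}$ plus a remainder decaying strictly faster than $s^{-3}$, computing the two contributions separately, and showing that the coefficient of $y_2$ produced in the process is precisely compensated by the $B$ chosen from the $s=0$ normalization.

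\textbf{Main obstacle.} The delicate step is exactly this compatibility: the two asymptotic requirements $\varphi(s)\sim s$ at $0$ and $\varphi(s)\sim 1/s^{3}$ at infinity together impose three conditions on a two-parameter family of solutions, so the existence of a solution with both properties is a genuine (not free) statement. One must verify that for the class of right-hand sides $h$ described in the lemma, the $y_2$-coefficient arising at infinity from the variation-of-parameters representation is automatically absorbed by the normalization performed at the origin. The cleanest case is $\beta-1\in(1/2,1)$, where Lemma~\ref{uno} applies verbatim; the borderline values $\beta=3/2$ and $\beta=5/2$ (for which $y_2$ or the particular ansatz degenerates) require a separate argument using logarithmic corrections or a limiting procedure in $\beta$.
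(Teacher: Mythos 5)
Your reduction to the ODE $L_{\beta-1}\varphi=-h$ via the self-similar ansatz, and the reliance on Lemma \ref{uno} for the homogeneous solutions, is exactly the paper's starting point. The paper, however, does not use two-solution variation of parameters followed by a matching of normalizations: it uses reduction of order, writing $\varphi=z\,y_1$ with
\begin{equation*}
z(s)=-\int_0^s\frac{e^{-\eta^2/4}}{y_1(\eta)^2\,\eta^2}\Bigl(\int_0^\eta h(x)\,y_1(x)\,x^2e^{x^2/4}\,dx\Bigr)\,d\eta ,
\end{equation*}
so the behavior at the origin ($z\sim s^2$, hence $\varphi\sim s$) is built into the single formula, and the behavior at infinity is then read off from the same formula. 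There is no separate ``compensation'' step in the paper's argument.

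The genuine gap in your proposal is precisely the step you flag and defer, and the mechanism you propose for closing it cannot work. By your own computation $\varphi_p$ is already $O(s)$ at the origin (no $1/s$ term and no constant term), so the normalization at $s=0$ forces $A=B=0$; there is then no free homogeneous coefficient left to absorb anything at infinity. The solution that is $O(s)$ at the origin is unique, and its coefficient on the slowly decaying homogeneous mode $y_2\sim s^{-2(\beta-1)}$ at infinity is, from your variation-of-parameters formula (equivalently from the paper's formula for $z$), a fixed nonzero multiple of $\int_0^\infty h(x)\,y_1(x)\,x^2e^{x^2/4}\,dx$. Since $2(\beta-1)<3$ in the range where Lemma \ref{uno} applies, this mode dominates $s^{-3}$ unless that integral vanishes, and for the one-signed data of the lemma ($h>0$, $y_1>0$) it does not vanish. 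So the decay at infinity is exactly the nontrivial content of the lemma, it is not delivered by the outline, and the route ``fix $B$ at the origin so that it cancels the $y_2$-coefficient at infinity'' is not available. Your instinct that the $0$/$\infty$ compatibility is the crux is correct; the proposal neither proves it nor sketches an argument that could.
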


\begin{proof}
We look for a solution to \eqref{cal1} of the form  $\psi (r,t) = t^{-(\beta -1)} \varphi({r\over \sqrt{t}} )$. Thus $\varphi$ satisfies
$$
\varphi '' + \left({2\over s} + {s\over 2} \right) \varphi' + (\beta -1) \varphi + h(s ) = 0.
$$
We look for a solution of the above equation of the form
$$
\varphi (s) = z(s) \, y_1 (s)
$$
where $y_1$ solves $y_1'' + \left({2\over 2} + {s\over 2} \right) y_1' + (\beta -1) y_1 =0$, and
$y_1 (s ) \sim \left\{ \begin{matrix} {1\over s} & \quad {\mbox {as}} \quad s \to 0\\
e^{-{s^2 \over 4}} \, s^{4(\beta -1) -3} & \quad {\mbox {as}} \quad s \to \infty
\end{matrix} \right. $. The existence of $y_1$ is consequence of Lemma \ref{uno}. A direct computation gives
$$
z(s ) = -\int_0^s {e^{-{\eta^2 \over 4}} \over y_1 (\eta)^2 \eta^2 } \, \left( \int_0^\eta h(x) y_1 (x) \, x^2 \, e^{x^2 \over 4} \, dx \right) \, d\eta .
$$
One can easily see that
$$
z(s) \sim \left\{ \begin{matrix} s^2 & \quad {\mbox {as}} \quad s \to 0 \\
e^{s^2 \over 4} s^{-4 (\beta -1)} & \quad {\mbox {as}} \quad s\to \infty
\end{matrix}.
\right.
$$
This fact gives \eqref{cal2}, and concludes the proof of the Lemma.
\end{proof}

\medskip
\noindent
\begin{proof}[Proof of \eqref{mmee2}.]

For $x \in B_{2R}$, we shall prove
\begin{equation}\label{mmee2n}
 \phi_0 (\mu_0 x , t) - \phi_0 (0,t) = \alpha (t) |\mu_0 x |^\sigma \, \Pi (t) \, \Theta (|x|) ,
 \end{equation}
 for some $\sigma \in (0,1)$. Here $\Pi = \Pi(t)$ denotes a smooth and bounded function of $t$, and $\Theta$ a smooth and bounded function of $x$.

We have
\begin{align*}
\phi_0 (\mu_0 x,t) - \phi_0 (0,t) &=\int_{t_0}^t {1\over (4\pi (t-s) )^{3\over 2} } \, \int_{\R^3} \left[ e^{-{|x-y|^2 \over 4 (t-s) }}  - e^{-|y|^2 \over 4 (t-s)} \right] \, {\alpha (s) \over |y|} \, {\bf 1}_{\{ r< M\}} \,  dy \, ds \\
&= {1\over 2} \int_{t_0}^t \int {\beta' (s) \over (t-s)^{1\over 2}} \, \left[e^{-|z-{\mu_0 x \over 2\sqrt{t-s}}|^2} - e^{-|z|^2} \right] \, {1 \over |z|} \, {\bf 1}_{\{ |z|< {M \over 2\sqrt{t-s}}\}} \,  dy \, ds \\
&= I + II
\end{align*}
where
$$
I= \int_{t_0}^{t- ({\mu_0 x \over 2m})} \int {\beta' (s) \over (t-s)^{1\over 2}} \, \left[e^{-|z-{\mu_0 x \over 2\sqrt{t-s}}|^2} - e^{-|z|^2} \right] \, {1 \over |z|} \, {\bf 1}_{\{ |z|< {M \over 2\sqrt{t-s}}\}} \,  dy \, ds.
$$
We start estimating $II$. We observe that, if $t-  ({\mu_0 x \over 2m}) <s<t$, then ${\mu_0 |x| \over 2\sqrt{t-s} } >m$.
We write
$$
II = II_1 + II_2 + II_3
$$
where
$$
II_j = \int_{t- ({\mu_0 x \over 2m})}^t \int_{D_j } {\beta' (s) \over (t-s)^{1\over 2}} \, \left[e^{-|z-{\mu_0 x \over 2\sqrt{t-s}}|^2} - e^{-|z|^2} \right] \, {1 \over |z|} \, {\bf 1}_{\{ |z|< {M \over 2\sqrt{t-s}}\}} \,  dy \, ds.
$$
with
$$
D_1 = \{ z \, : \, |z-{\mu_0 x \over 2\sqrt{t-s}}| <{1\over 4} {\mu_0 |y| \over 2\sqrt{t-s}} \}, \quad
D_2 = \{ z \, : \, |z| <{1\over 4} {\mu_0 |y| \over 2\sqrt{t-s}} \}
$$
and $D_3$ the complement of the two above regions.

We start estimating $II_1$. We see that
\begin{align*}
\int_{D_1 } \,  e^{-|z-{\mu_0 x \over 2\sqrt{t-s}}|^2}  \, {1 \over |z|} \, {\bf 1}_{\{ |z|< {M \over 2\sqrt{t-s}}\}} \,  dy
= \int e^{-|\bar z|} {1\over |\bar z +{\mu_0 x \over 2 \sqrt{t-s}}|} \, d\bar z = c {2\sqrt{t-s} \over \mu_0 |x| },
\end{align*}
for some constant $c$, as a direct application of Dominated Convergence Theorem. Thus
\begin{align*}
\int_{t- ({\mu_0 x \over 2m})}^t \int_{D_1 } \,  e^{-|z-{\mu_0 x \over 2\sqrt{t-s}}|^2}  \, {1 \over |z|} \, {\bf 1}_{\{ |z|< {M \over 2\sqrt{t-s}}\}} \,  dy\, ds
= {2c \over \mu_0 |x|} \int_{t- ({\mu_0 x \over 2m})}^t \, \sqrt{t-s} ds = c' (\mu_0 |x|)^{1\over 2}.
\end{align*}
On the other hand, for any $z$ in $D_1$, one has $|z| > {1\over 4} {\mu_0 |x| \over 2\sqrt{t-s}}$, and hence we can bound
$$
\left| \int_{D_1} e^{-|z|^2 } {1\over |z|} \, dz \right| \leq c \left[ {\sqrt{t-s} \over \mu_0 |x| } \right]^\sigma,
$$
for any $\sigma >0$. We take $\sigma >1$, so that
\begin{align*}
|\int_{t_0}^{t- ({\mu_0 x \over 2m})} & \int_{D_1}  e^{-|z|^2}  \, {1 \over |z|} \, {\bf 1}_{\{ |z|< {M \over 2\sqrt{t-s}\}}} \,  dy\, ds  | \leq
{1\over (\mu_0 |x| )^\sigma }\left| \int_{t_0}^{t- ({\mu_0 x \over 2m})}  (t-s)^{{\sigma \over 2} - {1\over 2}}\, ds \right| \leq c' \mu_0 |x|
\end{align*}
Thus we conclude that
$$
\left| II_1 \right| \lesssim \beta' (t) (\mu_0 |x| )^{1\over 2}.
$$
Arguing in a similar way, one finds the same type of estimate for $II_2$. In the third region $D_3$, we have that
$$
|z| > {1\over 4} {\mu_0 |x| \over 2\sqrt{t-2}} , \quad |z-{\mu_0 x \over 2\sqrt{t-s}} | > {1\over 4} {\mu_0 |x| \over 2\sqrt{t-s}},
$$
so that again one gets the estimate
$$
\left| II_3 \right| \lesssim \beta' (t) \mu_0 |x|.
$$
Let us now consider the interval of time $t_0 < s < t- \left( {\mu_0 |x| \over 2m \sqrt{t-s}} \right)^2 $, region where one has ${\mu_0 |x| \over 2\sqrt{t-s}} <m$.
We decompose
$$
I= III + IV
$$
where
$$
III= \int_{t_0}^{t-1}  \int {\beta' (s) \over (t-s)^{1\over 2}} \, \left[e^{-|z-{\mu_0 x \over 2\sqrt{t-s}}|^2} - e^{-|z|^2} \right] \, {1 \over |z|} \, {\bf 1}_{\{ |z|< {M \over 2\sqrt{t-s}\}}} \,  dy \, ds
$$
We start with $IV$, where we expand in Taylor
\begin{align*}
IV & = \int_{t-1 }^{t- \left( {\mu_0 |x| \over 2m \sqrt{t-s}} \right)^2}  \int {\beta' (s) \over (t-s)^{1\over 2}} \, \left[e^{-|z-{\mu_0 x \over 2\sqrt{t-s}}|^2} - e^{-|z|^2} \right] \, {1 \over |z|} \, {\bf 1}_{\{ |z|< {M \over 2\sqrt{t-s}\}}} \,  dy \, ds  \\
&= \beta' (t) \int_{t-1 }^{t- \left( {\mu_0 |x| \over 2m \sqrt{t-s}} \right)^2} {\mu_0 |x| \over t-s } \left( \int {e^{-|z|^2 } \over |z| } \, dz \right) \, ds = \beta' (t) \log \left ( {t- \left( {\mu_0 |x| \over 2m \sqrt{t-s}} \right)^2 \over t  } \right) \mu_0 |x| \\
&= \beta' (t) \mu_0 |x| [\log (\mu_0 |x| )] = \beta' (t) (\mu_0 |x|)^{\sigma},
\end{align*}
for some positive $\sigma <1$.
Finally, we consider $III$. Again, after a Taylor expansion, we have
$$
III = \mu_0 |x| \, \int_{t_0}^{t-1} {\beta' (s) \over (t-s )} \, ds
= \mu_0 |x| \int_{t_0}^{t-1} {\beta' (s) \over t-s} \, ds.
$$
Collecting the previous estimates, we conclude with the validity of \eqref{mmee2n}.
\end{proof}

\bigskip

\setcounter{equation}{0}
\section{Appendix B}\label{appeB}

\medskip
\begin{proof}[Proof of Lemma \ref{des1}]

Throughout the proof of the Lemma,  we denote by $q_i = q_i (s)$, for any interegr $i$, a smooth real function, with the property that ${d \over (d s)^j} q_i(0) = 0$, for $j <i$, and ${d \over (d s)^i} q_i(0)  \not= 0$.
With  $\Theta = \Theta (r)$ we intend a smooth function of the space variable, which is uniformly bounded.
Also, $\Pi = \Pi (t) $ stands for a smooth function of the time variable, which is uniformly bounded in $t \in (0, \infty)$. The explicit expressions of these functions change from line to line, and also within the same line.

\medskip
Let $R_0 = r_0 \sqrt{t}$. A simple computation gives the explicit expression of the error ${\mathcal E}_1$ in \eqref{error1}
\begin{eqnarray}\label{e1}
{\mathcal E}_1 (r,t) &=& {\mathcal E}_{{\mbox {in}}}^1 \eta ({r\over R_0} )
+{\mathcal E}_{{\mbox {out}}}^1 \left( 1- \eta ({r\over R_0} )
\right) \\
&+& \underbrace{ R_0^{-2}\left( \uin -  \uout \right) \Delta \eta ({r\over R_0} ) + 2
R_0^{-1} \nabla \left( \uin -  \uout \right) \cdot \nabla \eta ({r\over R_0} ) }_{:= \bar {\mathcal E}_1} \nonumber \\
&+& \underbrace{ \left( \uin -  \uout \right) \, {R_0' \over R_0^2} \,   \eta' ({r\over R_0} ) }_{:= \hat {\mathcal E}_1} \nonumber
\end{eqnarray}
where
\begin{equation}\label{Einoutdef}
{\mathcal E}_{{\mbox {in}}}^1= \Delta \uin + \uin^5 - \partial_t \uin
, \quad  {\mbox {and}} \quad {\mathcal E}_{{\mbox {out}}}^1= \Delta \uout + \uout^5 - \partial_t \uout.
\end{equation}
We start analyzing $ {\mathcal E}_{{\mbox {in}}}^1$, getting
\begin{eqnarray}\label{Ein1}
{\mathcal E}_{\mbox {in}}^1 (r,t ) &=& \mu_0' \left[ \Delta \psi_1 + 5 w_\mu^4 \psi_1 \right] -
\mu' {\partial w_\mu \over \partial \mu} \nonumber \\
&+& (w_\mu + \mu_0'\psi_1 )^5 - w_\mu^4 - 5 w_\mu^4 \mu_0' \psi_1
-\mu_0 '' \psi_1 - \mu_0'\mu' {\partial \psi_1 \over \partial \mu} \nonumber \\
&=& \left( \mu' - \mu_0'\right) \, \mu^{-{3\over 2}} Z_0 ({r \over \mu} )+
\left[
 (w_\mu + \mu_0'\psi_1 )^5 - w_\mu^5 - 5 w_\mu^4 \mu_0' \psi_1 \right] \nonumber \\
&-& \mu_0 '' \psi_1 - \mu_0'\mu' {\partial \psi_1 \over \partial \mu}.
\end{eqnarray}
Now we write
\begin{eqnarray*}
\left( \mu' - \mu_0'\right) \, \mu^{-{3\over 2}} Z_0 ({r \over \mu} )&=& \left[ 2 (\mu^{1\over 2} - \mu_0^{1\over 2} )' +
(\mu^{1\over 2} - \mu_0^{1\over 2} ) \mu_0^{-1} \mu_0' \right] \, \mu^{-1} Z_0 ({r\over \mu} ) \\
&-& { (\mu^{1\over 2} - \mu_0^{1\over 2} )^2 \over \mu^{1\over 2}} \mu_0^{-1} \mu_0' \,  \mu^{-1} Z_0 ({r\over \mu} ).
\end{eqnarray*}
Taking into account that $Z_0 (s) = {3^{1\over 4} \over 2} \, {1\over s} + O({1\over s^3}) $, as $s\to \infty$,
 it is convenient to write
\begin{eqnarray*}
\left[ 2 (\mu^{1\over 2} - \mu_0^{1\over 2} )' +
(\mu^{1\over 2} - \mu_0^{1\over 2} ) \mu_0^{-1} \mu_0' \right] & & \mu^{-1} Z_0 ({r\over \mu} )\,  =
 {\alpha (t)   \over \mu + r}
\\
&+ & \left[ 2 (\mu^{1\over 2} - \mu_0^{1\over 2} )' +
(\mu^{1\over 2} - \mu_0^{1\over 2} ) \mu_0^{-1} \mu_0' \right] \mu^{-1} \left[ Z_0 ({r \over \mu} ) - {3^{1\over 4} \over 2}  {\mu \over \mu+ r} \right]  ,
\end{eqnarray*}
where $\alpha $ is defined in \eqref{defaaa}.
We  decompose \eqref{Ein1} as
\begin{equation}\label{Ein11}
{\mathcal E}_{\mbox {in}}^1 (r,t ) =   {\alpha (t) \over \mu + r} \, + \bar {\mathcal E}_{\mbox {in}}^1 (r,t ), \end{equation}
where $\bar {\mathcal E}_{\mbox {in}}^1$ is explicitly given by
\begin{eqnarray}\label{barEin1}
\bar {\mathcal E}_{\mbox {in}}^1 (r,t )
&=& - { (\mu^{1\over 2} - \mu_0^{1\over 2} )^2 \over \mu^{1\over 2}} \mu_0^{-1} \mu_0'\,   \mu^{-1} Z_0 ({r\over \mu} ) - \mu_0 '' \psi_1
+
\left[
 (w_\mu + \mu_0'\psi_1 )^5 - w_\mu^5 - 5 w_\mu^4 \mu_0' \psi_1 \right]  \nonumber \\
&+&  \left[ 2 (\mu^{1\over 2} - \mu_0^{1\over 2} )' +
(\mu^{1\over 2} - \mu_0^{1\over 2} ) \mu_0^{-1} \mu_0' \right] \mu^{-1} \left[ Z_0 ({r \over \mu} ) - {3^{1\over 4} \over 2}  {\mu \over \mu + r} \right]-\mu_0'\mu' {\partial \psi_1 \over \partial \mu}   \, \nonumber \\
&=& \sum_{j=1}^5 e_j.
\end{eqnarray}
We observe now that $(e_1 + e_2 +e_3 ) \eta ({r\over R_0}) $  can be described as sum of functions of the form
\begin{equation}\label{uffa}
{\mu_0^{1\over 2} t^{-2} R_0^2 \over \mu_0 + r} \, q_0 (\Lambda ) \, \Pi (t) \,  \Theta (r) , \quad
{\mu_0^{-{1\over 2}} t^{-1} \over \mu_0 + r} \, q_2 (\Lambda ) \, \Pi (t) \, \Theta (r),
\end{equation}
where $q_0$ is a smooth function with $q(0) \not= 0$, while $q_2$ is a smooth function with
$q_2(0)= q_2' (0) = 0 $, and $q_2'' (0) \not= 0$.
On the other hand,   we see that
\begin{equation}\label{uffaaa}
e_4 = { \alpha (t) \mu_0^2 \over \mu_0^3 + r^3} \, \Pi(t) \, \Theta (r),
\end{equation}
and $e_5$
\begin{equation}\label{uffaaaa}
 {\mu_0^{1\over 2} t^{-1} \over \mu_0 + r} \left[ R_0^2 \Lambda'+ R_0^2 t^{-1} q_1 (\Lambda ) \right] \, \Pi (t ) \, \Theta (r),
\end{equation}
where $q_1$ is a smooth function with $q_1 (0) = 0 $, $q_1' (0) \not= 0$.
Under assumption \eqref{lambdabound} and  combining \eqref{Ein11}-\eqref{uffa}-\eqref{uffaaa}-\eqref{uffaaaa}, we find that
\begin{align*}
\left| \bar {\mathcal E}_{\mbox {in}}^1 \eta  \right|_{\infty , B(x,1)\times [t,t+1]}   \lesssim \mu_0^{1\over 2} t^{-{3\over 2} } h_0 ({r\over \sqrt{t}} ) , \quad r=|x|.
\end{align*}
Since \eqref{defaaa}, we observe that
$$
\left| {\alpha (t)   \over \mu + r} \left( 1- \eta ({r\over R_0} ) \right) \right| \lesssim \mu_0^{3\over 2} t^{-{3\over 2} } h_0 ({r\over \sqrt{t}} ) , \quad r=|x|.
$$
Let us fix $\lambda_1$ and $\lambda_2$ satisfying \eqref{lambdabound}. We write, for some $\bar \lambda = s \lambda_1 + (1-s) \lambda_2 $, $s \in (0,1)$,
$$
\left( \bar {\mathcal E}_{\mbox {in}}^1  [\lambda_1] - \bar {\mathcal E}_{\mbox {in}}^1  [\lambda_2] \right)  \eta ({r\over R_0})  =\left(  D_{\lambda} \bar {\mathcal E}_{\mbox {in}}^1  [\bar \lambda]  [\lambda_1 - \lambda_2] \right) \eta ({r\over R_0}), \quad {\mbox {with}} \quad
D_{\lambda} \bar {\mathcal E}_{\mbox {in}}^1    [\bar \lambda] = \sum_{j=1}^5 ( D_{\lambda } e_j  ) [\bar \lambda],
$$
where the $e_j$ are defined in \eqref{barEin1}. Let us consider $e_1$. We have that
$$
(D_{\lambda } e_1) [\bar \lambda ] = 2 \mu_0 (1+ \Lambda ) D_\mu (e_1) [\bar \lambda ].
$$
Direct computation give that
$$
\left| D_\mu (e_1) [\bar \lambda ]  (r,t) \right| \lesssim {\mu_0^{-{1\over 2} } t^{-1} \over \mu_0 + r} q_0 (\bar \lambda ) \, \Pi (t) \,  \Theta (r).
 $$
 We combine the above estimates to get
 \begin{align*}
 \left| e_1 [\la_1] - e_1 [\la_2] \right| \eta ({r \over R_0} ) & \leq \mu_0 {\mu_0^{-{1\over 2}} t^{-1} \over \mu_0 + r } \, |\la_1 - \la_2 | \eta ({r\over R_0} ) \\
 &\leq C \left( \mu_0 (t) t^{-1} \right) \, \mu_0^{3\over 2} (t) t^{-{3\over 2}} h_0 ({r\over \sqrt{t}} ) \| \la_1 - \la_2 \|_\sharp \\
 &\leq C \left( \mu_0 (t_0) t_0^{-1} \right) \, \mu_0^{3\over 2} (t) t^{-{3\over 2}} h_0 ({r\over \sqrt{t}} ) \| \la_1 - \la_2 \|_\sharp.
 \end{align*}
 Choosing $t_0$ large if necessary, we get $C \left( \mu_0 (t_0) t_0^{-1} \right) <1$.
 Similar estimates can be obtained for the other terms $e_2$, $ \ldots $, $e_5$.
 Thus we get
 $$
 \left| \left( \bar {\mathcal E}_{\mbox {in}}^1  [\lambda_1] - \bar {\mathcal E}_{\mbox {in}}^1  [\lambda_2] \right)  \chi   \right|_{\infty , B(x,1)\times [t,t+1]}
 \leq c_1^o \mu_0^{1\over 2} t^{-{3\over 2}} h_0 ({r\over \sqrt{t}} ) \, \|\lambda_1 - \lambda_2 \|_\sharp,
 $$
 for some constant $c_1^o$ which can be made arbitrarily small, if $t_0$ is chosen large. Also,
we have
$$
[ \bar {\mathcal E}_{\mbox {in}}^1  [\lambda_1]  - \bar {\mathcal E}_{\mbox {in}}^1  [\lambda_2] ]_{0,\sigma, [t,t+1] } \leq c_1^o \mu_0^{1\over 2} t^{-{3\over 2}} h_0 ({r\over \sqrt{t}} ) \,  [\lambda_1 - \lambda_2 ]_{0, \sigma , [t,t+1]}  .
$$

\medskip
Let us now describe ${\mathcal E}_{\mbox {out}}^1$.  A first observation is that, for any value of $\gamma$, we immediately see that ${\mathcal E}_{\mbox {out}}^1$  does not depend on $\lambda$.
On the other hand, if $1< \gamma \leq 2$ the expression for ${\mathcal E}_{\mbox {out}}^1$ becomes
$$
{\mathcal E}_{\mbox {out}}^1 (r,t) = \uout^5 ,
$$
so that we directly get
\begin{equation}\label{uffa02}
\left| {\mathcal E}_{\mbox {out}}^1 \, (1-\chi ({r\over R_0})) \right|\leq C   {\mu_0^{5\over 2} \over r^5} \, {\bf 1}_{\{ r> R_0^{-1}\} }\, .
\end{equation}
Let us consider now  $\gamma >2$. In this case,  the expression of ${\mathcal E}_{\mbox {out}}^1$ is a bit more involved
\begin{eqnarray}\label{ee1}
{\mathcal E}_{\mbox {out}}^1 (r,t) &=&  \eta ({r\over t} ) (\uout^1 )^5
+ \left( 1- \eta ({r\over t} ) \right) \, A \, \left[ {\gamma (\gamma-1) \over r^{\gamma +2} } + {A^4 \over r^{5\gamma}} \right]
 \\
&+& \underbrace{ t^{-2} \left( \uout^1 -  \uout^2 \right) \Delta \eta ({r\over t} ) + 2
t^{-} \nabla \left( \uout^1 -  \uout^2 \right) \cdot \nabla \eta ({r\over t} ) }_{:= \bar {\mathcal E}_1^{out}} \nonumber \\
&+& \underbrace{ \left( \uout^1 -  \uout^2 \right) \,  t^{-2}   \,   \eta' ({r\over t} ) }_{:= \hat {\mathcal E}_1^{out}} . \nonumber
\end{eqnarray}
A close analysis of each one of the terms appearing in \eqref{ee1} gives that
\begin{eqnarray}\label{uffa2}
\left| {\mathcal E}_{\mbox {out}}^1 \, (1-\eta ({r\over R_0})) \right| & \leq & C \Biggl\{
{t^{-(\gamma -1)} \over r^3 } \, {\bf 1}_{\{ r > t \}} \nonumber \\
&+& {t^{-2} \mu_0^{1\over 2} \over r} \,  {\bf 1}_{\{ t < r < 2 t \}}
+ {t^{-{5\over 2}} \over r^5} \, {\bf 1}_{\{r_0 \sqrt{t} < r < t \}} \Biggl\}.
  \end{eqnarray}
From \eqref{uffa02}-\eqref{ee1} and \eqref{uffa2}, we obtain that
$$
\left| {\mathcal E}_{\mbox {out}}^1 \left(1 - \chi ({r\over R_0} ) \right) \right| \lesssim \left\{ \begin{matrix} \mu_0^{1\over 2} t^{-{3\over 2}} h_0 ({r\over \sqrt{t}} ) & \quad {\mbox {if}} \quad 1<\gamma \leq 2 \\
t^{-2} h_0 ({r\over \sqrt{t}} )  & \quad {\mbox {if}} \quad \gamma > 2.
\end{matrix} \right.
$$

\medskip
Going back to \eqref{e1}, we are left with the description of
 $\bar {\mathcal E}_1 = \bar {\mathcal E}_1 [\lambda]$ and $\hat {\mathcal E}_1 [\lambda] $. Directly we check
\begin{equation}\label{estbarE1}
\left| \bar {\mathcal E}_1 (r,t) \right| , \, \left| \hat {\mathcal E}_1 (r,t)  \right|\leq C R_0^{-2} \,  \,  {\mu_0^{1\over 2} \over r} \, {\bf 1}_{\{R_0 < r < 2R_0\}},
\end{equation}
for some positive constant $C$. This gives right away
$$
\left| \bar {\mathcal E}_1 + \hat {\mathcal E}_1 \right|\lesssim \mu_0^{1\over 2} t^{-{3\over 2} } h_0 ({r\over \sqrt{t}} ) .
$$
Let us fix $\lambda_1$ and $\lambda_2$ satisfying \eqref{lambdabound}. We write, for some $\bar \lambda = s \lambda_1 + (1-s) \lambda_2 $, $s \in (0,1)$,
$$
\bar {\mathcal E}_1 [\lambda_1] (r,t) - \bar {\mathcal E}_1 [\lambda_2] (r,t) = D_{\lambda} \bar {\mathcal E}_1 [\bar \lambda]  [\lambda_1 - \lambda_2] (r,t),
$$
where
$$
D_{\lambda} \bar {\mathcal E}_1 [\bar \lambda] = R_0^{-2} (\partial_{\lambda} \uin [\bar \lambda] )  \Delta \eta ({r\over R_0}) +
2R_0^{-1} \nabla \left(( \partial_{\lambda} \uin )[\bar \lambda] \right) \cdot \nabla \eta ({r \over R_0} ).
$$
Since in the region we are considering
$$
\partial_{\lambda} \uin [\bar \lambda] = 2 \mu_0 (1+ \Lambda ) (\partial_{\mu} \uin ) [\bar \lambda], \quad | (\partial_{\mu} \uin ) | \leq c {\mu_0^{-{1\over 2}} \over r},
$$
we have
\begin{align*}
\left| \bar {\mathcal E}_1 [\lambda_1] (r,t) - \bar {\mathcal E}_1 [\lambda_2]  \right|_{\infty , B(x,1)\times [t,t+1]}  & \leq \left( \mu_0 (t_0) t_0^{-1} \right)  \mu_0^{1\over 2} t^{-{3\over 2}} h_0 ({r\over \sqrt{t}} ) \| \lambda_1 -\lambda_2 \|_\sharp \\
&\leq c_1 \mu_0^{1\over 2} t^{-{3\over 2}} h_0 ({r\over \sqrt{t}} ) \| \lambda_1 -\lambda_2 \|_\sharp ,
\end{align*}
for some constant $c_1 \in (0,1)$, provided $t_0$ is large enough.
Furthermore, we also have, for any $t >t_0$,
$$
[ \bar {\mathcal E}_1 [\lambda_1]  - \bar {\mathcal E}_1 [\lambda_2] ]_{0,\sigma, [t,t+1] } \leq c_1 \mu_0^{1\over 2} t^{-{3\over 2}} h_0 ({r\over \sqrt{t}} ) \, \left(  [\lambda_1 - \lambda_2 ]_{0, \sigma , [t,t+1]} \right) ,
$$
with again $c_1 \in (0,1)$.
Collecting all the previous estimates, we get the proof of the Lemma.
\end{proof}

\medskip
\begin{remark}\label{rrr1} From the proof of the result, we also get that the constants ${\bf c}$ in \eqref{marte0001} and \eqref{marte0002} can be made as small as one needs, provided that the initial time $t_0$ is chosen large enough.
\end{remark}.

\bigskip

\setcounter{equation}{0}
\section{Appendix C}\label{appeC}

\medskip
\begin{proof}[Proof of Lemma \ref{des2}]

Under the assumptions \eqref{lambdabound} on  $\lambda$, we get that, for any $r>0$ and $t>t_0$,
\begin{equation}\label{marte111}
\left| {\mathcal E}_{2,1} (r,t) \right| +  \left[ {\mathcal E}_{2,1} (r,t) \right]_{0,\sigma , [t,t+1]}   \lesssim \mu_0^{1\over 2} t^{-{3\over 2}} h_0 ({r\over \sqrt{t}}),
\end{equation}
where $h_0$ is given by \eqref{defh}, and also estimates similar to \eqref{marte0001} and \eqref{marte0002} for $\partial_\lambda {\mathcal E}_{2,1}$.
These estimates follow from \eqref{marte0}-\eqref{marte0001}, \eqref{defaaa} and
from
$$
\left| {\alpha (t) \over \mu + r} \left( \eta ({r\over R_0} ) - {\bf 1}_{\{ r < 2M\} }\right) \right|\leq |\alpha (t) | t^{-{1\over 2}} h_0 ({r\over \sqrt{t}}).
$$
Here we use again $R_0 = r_0 \sqrt{t}$.
Furthermore, in the region where $\eta ({r\over R_0} ) - {\bf 1}_{\{ r < 2M\} } \not= 0$, the above function is regular enough to have
$$
 [{\alpha (t) \over \mu + r} \left( \eta ({r\over R_0} ) - {\bf 1}_{\{ r < 2M\} }\right) ]_{0, \sigma , B(x,1)\times [t, t+1] }\leq |\alpha (t) | t^{-{1\over 2}} h_0 ({r\over \sqrt{t}}), \quad r=|x|.
$$
Using \eqref{alphabound}, we get \eqref{marte111}. Let us consider now ${\mathcal E}_{22} (1-\eta_R ) (r,t )  $.
We claim that
\begin{equation}\label{marte01}
 \left\| {\mathcal E}_{22} (1-\eta_R ) (r,t )  \right\|_{* } \leq c_2.
\end{equation}
Given $d>1$, define
$
 h_* (s) =  \left\{ \begin{matrix}
{1\over s} & \quad {\mbox {for}} \quad s\to 0 \\
{1\over s^d} & \quad {\mbox {for}} \quad s\to \infty
\end{matrix}\right. .
$
Arguing as in the proof of Lemma \ref{lemacalor}, we get the existence of $\psi_*$ so that
$$
\partial_t \psi_* = \Delta \psi_* + \mu_0^{1\over 2} t^{-{3\over 2}} h_* \left( {r\over \sqrt t} \right) , \quad
{\mbox {with}} \quad \psi_* (r,t ) = \mu_0^{1\over 2} t^{-{1\over 2}} \varphi_* ({r\over \sqrt{t}} )
,\quad
\varphi (s) = \left\{ \begin{matrix}
s & \quad {\mbox {for}} \quad s\to 0 \\
{1\over s^d} & \quad {\mbox {for}} \quad s\to \infty
\end{matrix}\right. .
$$
Comparing the above equation and the equation satisfied by $\phi_0$, and using the maximum principle, we obtain that, in the region where $(1-\eta_R) \not= 0$,
\begin{equation}\label{maso1}
\left| \phi_0 (x,t) \right| \leq \| \lambda \|_\sharp \mu_0^{1\over 2} t^{-{1\over 2}} \varphi_* ({r\over \sqrt{t}} ).
\end{equation}
We proceed now with the estimate of $(1-\eta_R) {\mathcal E}_{22}$. A Taylor expansion gives the existence of $s^* \in (0,1)$, so that
$$
{\mathcal E}_{22} (r,t) = 5 (U_1 + s^* \phi_0 )^4 \, \phi_0.
$$
Let $\bar M$ be a large fixed number. From \eqref{DefU} and \eqref{maso0}, we see that, if $r < \bar M \sqrt{t}$,
$$
\left| (1-\eta_R ) {\mathcal E}_{22} \right| \lesssim w_\mu^4 \phi_0
\lesssim  R^{-2}  \mu_0^{1\over 2} t^{-{3\over 2}} h_0 ({r\over \sqrt{t}}) .$$
On the other hand, thanks to \eqref{maso1} we see that, for $r> \bar M \sqrt{t}$, we get
$$
\left| (1-\eta_R ) {\mathcal E}_{22} \right| \lesssim ( \phi_0 )^5 \lesssim \mu_0^{5\over 2} t^{-{5\over 2} } h_0 ({r\over \sqrt{t}})
 .$$
Thus we get the $L^\infty$ bound in estimate \eqref{marte01}. The control on the H\"{o}lder norm contained in \eqref{marte0001new} and \eqref{marte0002new} follows arguing as in the proof of \eqref{marte0001}-\eqref{marte0002} in the proof of Lemma \ref{des1}, and from the assumption on $\lambda$ in \eqref{lambdabound}. We leave the details to the reader.

\end{proof}

\end{document}